\documentclass[a4paper]{amsart}

\usepackage{amsmath,amssymb,amsfonts,mathtools}
\usepackage{bm}
\usepackage{mathrsfs}
\usepackage{enumitem}
\usepackage{hyperref}
\usepackage{microtype}
\usepackage{booktabs}
\usepackage{cite}

\usepackage{mathtools}
\mathtoolsset{showonlyrefs}

\usepackage[
    a4paper,
    margin=0.7in
]{geometry}

\hypersetup{
  colorlinks=true,
  linkcolor=blue,
  citecolor=blue,
  urlcolor=blue
}

\allowdisplaybreaks

\newtheorem{theorem}{Theorem}[section]
\newtheorem{proposition}[theorem]{Proposition}
\newtheorem{lemma}[theorem]{Lemma}
\newtheorem{corollary}[theorem]{Corollary}

\theoremstyle{definition}
\newtheorem{definition}[theorem]{Definition}
\theoremstyle{plain}
\newtheorem{remark}[theorem]{Remark}

\newcommand{\R}{\mathbb{R}}
\newcommand{\C}{\mathbb{C}}

\newcommand{\Hh}{\mathcal{H}}
\newcommand{\eps}{\varepsilon}
\newcommand{\dd}{\,\mathrm{d}}

\newcommand{\sgn}{\operatorname{sgn}}
\newcommand{\Rea}{\operatorname{Re}}

\newcommand{\norm}[1]{\left\|#1\right\|}

\newcommand{\ip}[2]{\left\langle #1,#2\right\rangle}

\title[modified Benjamin--Ono equation]{Local well-posedness for the exponentially modified Benjamin-Ono equation}

\author[T. S. R. Santos]{Thyago S. R. Santos}
\address{Departamento de Matemática, Instituto de Matemática, Estatística e Computação Científica,
Universidade Estadual de Campinas, Campinas, SP 13083-859, Brazil}
\email{thyagosr@unicamp.br}

\subjclass[2020]{35Q53, 35A01, 35B30, 42B20}
\keywords{modified Benjamin-Ono equation, local well-posedness, Cole-Hopf transform, frequency envelopes}

\begin{document}

\begin{abstract}

We prove local well-posedness for the modified Benjamin-Ono equation in $H^s(\R)$ for $s>3/2$. This equation arises as a modified counterpart of the Benjamin-Ono equation through a Miura-type transformation, in close analogy with the classical relation between mKdV and KdV equations. One of the main obstacles is the apparent derivative loss in the quadratic part of the equation. To overcome this difficulty, we introduce a Cole-Hopf transform adapted to the structure of the equation. The map removes the quadratic derivative term and converts the equation into a Schrödinger equation with a transport and a commutator terms that can be controlled without derivative loss. Solutions are then constructed by parabolic regularization, while frequency envelopes are used to obtain the continuity of the data-to-solution map. In the degenerate case $\gamma=0$, the transform completely linearizes the equation and yields local well-posedness for every $s>1/2$, down to the scaling-critical threshold.
\end{abstract}

\maketitle

\section{Introduction}

\subsection{Setting and main results}

The Korteweg--de Vries (KdV) equation
$$
u_t+u_{xxx}+6uu_x=0
$$
is one of the classical models for the propagation of long, weakly nonlinear dispersive waves in shallow water. Its Cauchy problem has been extensively studied by several methods; see, for instance, \cite{Kato1983,KenigPonceVega1993,Bourgain1993KdV,KenigPonceVega1996,KillipVisan2019} and the references therein. A closely related model is the modified Korteweg--de Vries (mKdV) equation
$$
v_t+v_{xxx}+6v^2v_x=0,
$$
whose Cauchy theory has been developed in several works; see \cite{Kato1983,KenigPonceVega1993,KenigPonceVega2001,CollianderKeelStaffilaniTakaokaTao2003,HarropGriffithsKillipVisan2024}. The relation between KdV and mKdV is given by the Miura transformation. Indeed, if $v$ is a real-valued solution of the  mKdV equation, then
$$
\mathcal{M}_{\rm KdV}(v):=v^2+i v_x
$$
is a complex-valued solution of the KdV equation. Equivalent real-valued versions arise under the corresponding changes of sign convention. This connection originates in Miura's works \cite{Miura1968, Miura1976, MiuraGardnerKruskal1968} and remains one of the main structural links between KdV and mKdV.

A similar correspondence arises in the Benjamin--Ono (BO) equation
\begin{equation}\label{eq: BO}
u_t+\Hh u_{xx}+ 2uu_x=0,   \tag{BO}  
\end{equation}
where $\Hh$ denotes the Hilbert transform, defined on the Fourier side by $\widehat{\Hh f}(\xi)=-i\sgn(\xi)\widehat f(\xi)$. The BO equation arises as a model for the propagation of long internal waves in stratified fluids of great depth, and its Cauchy problem has been extensively studied; see, for example, \cite{Saut79, MR1044731, Iorio1986,Tao2004,IonescuKenig2007,MolinetPilod2012,KillipLaurensVisan2024}. As in the KdV setting, there is a modified equation
connected to BO through a Miura-type transformation. This equation takes the form
$$
u_t+\Hh u_{xx}+u_x\Hh u_x+\gamma(e^u-1)u_x=0.
$$
Indeed, for each $\gamma\in\R$ and every real-valued solution $u$, the Miura-type map
\begin{equation}\label{Miura Transform BO}
\mathcal{M}_{\rm BO}^{\gamma}(u)
:=\frac{1}{2}\left(
\Hh u_x+\gamma(e^u-1)+iu_x
\right)
\end{equation}
produces a complex-valued solution of the Benjamin--Ono equation. The connection between the Benjamin--Ono equation and its modified counterparts through Miura--B\"acklund transformations goes back to works from the late 1970s, in particular those of Bock and Kruskal \cite{BockKruskal1979} and Nakamura \cite{Nakamura1979}. This provides a nonlocal analogue of the classical KdV--mKdV correspondence and naturally explains the particular combination of the terms $\Hh u_{xx}$, $u_x\Hh u_x$, and $(e^u-1)u_x$; see also \cite[Section~3.6.6]{MR4400881}.

All these equations are closely related through completely integrable structures. For KdV and mKdV, this includes Lax pairs, inverse scattering, infinitely many conservation laws, and the classical Miura transformation; see, for instance, \cite{Lax1968,Miura1968,MiuraGardnerKruskal1968}. The BO equation also admits an inverse scattering formulation and an integrable hierarchy; see \cite{FokasAblowitz1983,FokasFuchssteiner1981,KaupMatsuno1998}. This framework naturally motivates the study of modified models and their connections with the original equations.

Motivated by the above discussion, in this work, we consider the following initial value problem
\begin{equation}\label{eq:embo}
\left\{
\begin{aligned}
&u_t+\Hh u_{xx}+u_x\Hh u_x+\gamma(e^u-1)u_x=0,\\
&u(0,x)=u_0(x), 
\end{aligned}
\right.\tag{e-mBO}
\end{equation}
where $\gamma\in\R$ is a fixed parameter. We emphasize that \eqref{eq:embo} should not be confused with the equation usually referred to as the cubic modified Benjamin--Ono equation in the literature,
\begin{equation}\label{cubic mBO}
 u_t+\Hh u_{xx}+u^2u_x=0,   
\end{equation}
whose local well-posedness theory has been developed in several works; see, for example, \cite{MolinetRibaud2004,KenigTakaoka2006,Guo2011mBO} and references therein. To avoid this ambiguity, we refer to \eqref{eq:embo} as the \emph{exponential modified Benjamin--Ono equation}, or simply the e-mBO equation. This terminology is motivated by the exponential nonlinearity
\begin{equation}\label{expansion}
(e^u-1)u_x:= uu_x + \frac{1}{2}u^2u_x + \frac{1}{6}u^3u_x + \frac{1}{24}u^4u_x+ \cdots=\sum_{n=1}^{\infty}\frac{u^n}{n!}u_x.  
\end{equation}
In particular, the equation differs substantially from the \eqref{cubic mBO}, both through this infinite analytic nonlinearity and through the quadratic derivative term $u_x\mathcal H u_x$, with one derivative falling on each factor.

As mentioned above, the equation in \eqref{eq:embo} has a previous history in the theory of integrable systems. Nakamura \cite{Nakamura1979} obtained explicit periodic and multisoliton solutions using the B\"acklund structure of the Benjamin--Ono equation. Later, Scoufis and Cosgrove \cite{ScoufisCosgrove1995} studied the initial-value problem by inverse-scattering methods, using the Fokas--Ablowitz transform for a complex extension of the Benjamin--Ono equation. More recently, Guo \cite{Guo2024} developed a B\"acklund transformation, a Lax representation, conservation laws, an inverse-scattering formulation, multisoliton solutions, and long-time asymptotics. These works approach the equation through its integrable structure. The purpose of the present paper is different: we develop a well-posedness theory in Sobolev spaces using PDE methods, without relying on inverse scattering.

The scaling of the equation provides a first indication of its natural regularity. When $\gamma=0$, equation \eqref{eq:embo} is invariant under the scaling
$$
u_\lambda(t,x)=u(\lambda^2t, \lambda x),
$$
thus the corresponding Sobolev critical index is $s_c:=1/2$. For $\gamma\ne0$, the exponential term $(e^u-1)u_x$ breaks this scaling. From the point of view of derivatives, however, it is of lower order than the quadratic term $u_x\Hh u_x$. Nevertheless, this lower-order contribution cannot be regarded as harmless, since its leading quadratic part $ uu_x$ has a decisive effect on the regularity of the flow map.

Indeed, for $t\in \R$, let $S(t):=e^{it\mathcal{H}\partial_x^2}$ be the linear Benjamin--Ono propagator, and define
\begin{equation}\label{operator B}
 \mathcal B(t)(\varphi,\psi)
:=
\int_0^t
S(t-t')B\bigl(S(t')\varphi,S(t')\psi\bigr)\dd t',   
\end{equation}
where
$$
B(f,g)
:=
\frac12\bigl(f_x\Hh g_x+g_x\Hh f_x\bigr)
+
\frac{\gamma}{2}\bigl(fg_x+gf_x\bigr).
$$
When $\gamma\ne0$, the term $uu_x$ in \eqref{expansion}, produces the same high--low frequency interaction responsible for the classical non-smoothness phenomena in Benjamin--Ono-type equations \cite{MolinetSautTzvetkov2001,KochTzvetkov2005}.
\begin{theorem}\label{thm:c2}
Let $\gamma\ne0$, $s\in\R$, and $t\ne0$. Then
$$
\mathcal B(t):H^s\times H^s\longrightarrow H^s
$$
is not a  bounded operator. Consequently, for every $T>0$, the Duhamel map associated with \eqref{eq:embo} cannot be $C^2$ as a map into $C([-T,T];H^s)$.
\end{theorem}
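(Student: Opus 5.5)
We would prove the unboundedness of $\mathcal B(t)$ by the high--low frequency counterexample of Molinet--Saut--Tzvetkov, adapted so that the term $\frac{\gamma}{2}(fg_x+gf_x)$ dominates. Fix $t\neq0$, $\gamma\neq 0$ and $s\in\R$. For a large parameter $N$ we set $\alpha=\eps/(N|t|)$, with $\eps>0$ a small absolute constant. We take
$$
\widehat{\varphi}=\alpha^{-1/2}\mathbf 1_{[\alpha/2,\alpha]},\qquad \widehat{\psi}=\alpha^{-1/2}N^{-s}\mathbf 1_{[N,N+\alpha]}.
$$
Then $\norm{\varphi}_{H^s}\sim 1$, since the weight $\langle\xi\rangle^s$ is $\sim1$ on $[\alpha/2,\alpha]$, and $\norm{\psi}_{H^s}\sim1$. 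The negative-frequency parts needed for real-valuedness can be added by symmetry; they only produce interactions supported away from the window below, so they do not affect the argument.

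On the Fourier side, with $\omega(\xi)=-\xi|\xi|$ the phase of $S(t)$, we have for $\xi$ near $N$
$$
\widehat{\mathcal B(t)(\varphi,\psi)}(\xi)= e^{it\omega(\xi)}\int \sigma(\xi_1,\xi-\xi_1)\,\widehat\varphi(\xi_1)\widehat\psi(\xi-\xi_1)\int_0^t e^{it'\Omega(\xi_1,\xi-\xi_1)}\dd t'\,\dd\xi_1 .
$$
The resonance function $\Omega=\omega(\xi_1)+\omega(\xi_2)-\omega(\xi_1+\xi_2)=2\xi_1\xi_2$ (both frequencies positive) satisfies $|t\Omega|\le 4\alpha N|t|= 4\eps$ on the support. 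Hence for $\eps$ small the inner time integral has modulus $\ge |t|/2$ and argument close to that of $t$, uniformly in $\xi_1$.

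Next we compare the two parts of the symbol $\sigma$. The part coming from $\frac{\gamma}{2}(fg_x+gf_x)$ is $\frac{i\gamma}{2}(\xi_1+\xi_2)$, of size $\sim|\gamma|N$. The part coming from the derivative-derivative term $\frac12(f_x\Hh g_x+g_x\Hh f_x)$ has modulus $\lesssim|\xi_1\xi_2|\le 2\alpha N=2\eps/|t|$, which is negligible compared with $|\gamma| N$ once $N$ is large. Thus $|\sigma|\gtrsim|\gamma|N$ with an essentially constant phase, so no cancellation occurs in the $\xi_1$ integral.

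For $\xi$ in a subinterval of $[N+\alpha/2,N+2\alpha]$ of length $\sim\alpha$, the $\xi_1$-support has measure $\sim\alpha$. This gives
$$
\bigl|\widehat{\mathcal B(t)(\varphi,\psi)}(\xi)\bigr|\gtrsim |\gamma|N|t|\cdot\alpha\cdot\alpha^{-1}N^{-s}=|\gamma||t|N^{1-s}.
$$
Multiplying by $\langle\xi\rangle^s\sim N^s$ and taking the $L^2$ norm over an interval of length $\sim\alpha$, we obtain
$$
\norm{\mathcal B(t)(\varphi,\psi)}_{H^s}\gtrsim |\gamma||t|N\alpha^{1/2}=|\gamma|\eps^{1/2}|t|^{1/2}N^{1/2}\to\infty
$$
as $N\to\infty$, while the inputs stay bounded in $H^s$. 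This proves that $\mathcal B(t)$ is not bounded.

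For the consequence, suppose the data-to-solution map $u_0\mapsto u$ were $C^2$ from a neighbourhood of $0$ in $H^s$ into $C([-T,T];H^s)$. Differentiating Duhamel's formula twice at $u_0=0$ in the directions $\varphi,\psi$ shows that the second derivative is a fixed nonzero multiple of $\mathcal B(t)(\varphi,\psi)$. The first variation is $S(t)\varphi$, and only the quadratic part of the nonlinearity, namely $u_x\Hh u_x+\gamma uu_x$, contributes at second order. The second derivative is a bounded bilinear map, so this contradicts the unboundedness just shown for any $0<|t|\le T$.

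The main obstacle is ensuring that no cancellation occurs. This requires two things: the time integral must not oscillate, which is guaranteed by the choice $\alpha\sim 1/(N|t|)$ making $|t\Omega|$ small; and the $\gamma$-part of the symbol must dominate the derivative-derivative part, which is guaranteed by that same smallness $\alpha N\lesssim 1/|t|$.
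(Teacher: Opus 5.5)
Your proof is correct and is essentially the paper's argument: a Molinet--Saut--Tzvetkov high--low second-iterate test in which the transport symbol $\frac{i\gamma}{2}(\xi_1+\xi_2)\sim|\gamma|N$ dominates, the resonance $|t\Omega|$ is kept small so the time integral is $\approx t$, and one gets $|\mathcal{F}[\mathcal B(t)(\varphi,\psi)]|\gtrsim N^{1-s}$ on a window whose width is comparable to that of the low-frequency block. The only differences are cosmetic. The paper pairs a positive low frequency with a negative high frequency, so the $u_x\Hh u_x$ symbol vanishes exactly by Proposition~\ref{prop:null}, and uses the $t$-independent width $N^{-1-\delta}$, giving growth $N^{(1-\delta)/2}$. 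You use same-sign frequencies, where that symbol is only $O(1/|t|)$, and the $t$-dependent width $\eps/(N|t|)$, which gives the slightly better rate $N^{1/2}$.
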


The standard second-iterate argument gives the obstruction. For completeness, we include the proof in the Appendix \ref{sec:c2}.

Thus, a standard Picard iteration based on the Duhamel formulation in \(H^s\) cannot provide the local theory, even at arbitrarily high regularity. Nevertheless, this obstruction does not prevent the existence of a continuous Sobolev flow. One of the main difficulties is the quadratic term
$$
u_x\Hh u_x.
$$
 At the level of a direct Sobolev energy estimate, this appears to lose one derivative: \(u\in H^s\) only gives \(u_x\mathcal H u_x\in H^{s-1}\), whereas an \(H^s\) estimate for the equation would require control at the same differentiated level.

The equation, however, has a cancellation that removes this apparent derivative loss. If $P_+$ and $P_-$ denote the projections onto the positive and negative frequencies, respectively, then we have
$$
u_x\Hh u_x
=
i(P_-u_x)^2-i(P_+u_x)^2,
$$
so interactions between positive and negative frequencies cancel. After projecting onto positive frequencies, we introduce the nonlinear change of variables
$$
e^{P_+u}-1,
$$
which exactly absorbs the remaining quadratic derivative term into the Schrödinger operator. The remaining differentiated term appears through
$$
[P_+,e^u-1]u_x,
$$
which can be estimated without losing derivatives. This allows us to close the estimates in $H^s$ for $s>3/2$.

This structure leads to the following local well-posedness result.

\begin{theorem}\label{thm:lwp}
Let $\gamma\in\R$ and $s>3/2$. For every $R>0$ there exists $T=T(s,|\gamma|,R)>0$ such that, for every $u_0\in H^s(\R)$ with $\norm{u_0}_{H^s}\le R$, there exists a unique real-valued solution of \eqref{eq:embo}
$$
u\in C([-T,T];H^s)
\cap
C^1([-T,T];H^{s-2}).
$$
Moreover, the data-to-solution map
$$
u_0\longmapsto u
$$
is continuous from $\left\{u_0\in H^s(\mathbb R):
\|u_0\|_{H^s}\le R\right\}$ to $C([-T,T];H^s)$. If, in addition, $u_0\in H^\sigma(\R)$ for some $\sigma>s$, then $u\in C([-T,T];H^\sigma)$.
\end{theorem}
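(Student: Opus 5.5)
The plan is to work with the gauge variable suggested in the introduction rather than with $u$ itself. For a real solution $u$, set $v:=P_+u$, so that $u=2\Rea v$ up to the zero mode, and define $w:=e^{v}-1$. First we compute the equation for $v$. On positive frequencies $\Hh=-i$, and the identity $u_x\Hh u_x=i(P_-u_x)^2-i(P_+u_x)^2$ gives $P_+\bigl(u_x\Hh u_x\bigr)=-i(v_x)^2$ after projection; the term $P_+(P_-u_x)^2$ vanishes. Hence
\begin{equation}
v_t-iv_{xx}-iv_x^2+\gamma P_+\bigl((e^u-1)u_x\bigr)=0 .
\end{equation}
Multiplying by $e^{v}$, the quadratic term is absorbed, since $(e^v)_{xx}=e^v(v_{xx}+v_x^2)$. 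This gives
\begin{equation}
w_t-iw_{xx}+\gamma e^{v}\Bigl((e^u-1)v_x+[P_+,e^u-1]u_x\Bigr)=0 ,
\end{equation}
a Schrödinger equation with a transport term $\gamma(e^u-1)w_x$ and a commutator term. For $\gamma=0$ this equation is linear. The key estimate is a commutator bound
\[
\norm{[P_+,f]g_x}_{H^s}\lesssim \norm{f}_{H^{s+1}}\norm{g}_{L^\infty}+\norm{f_x}_{L^\infty}\norm{g}_{H^s},
\]
which is a Calderón-type estimate, together with Moser estimates for $e^u-1$ in $H^s$ and $L^\infty$ and the algebra property for $s>1/2$. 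The real transport coefficient $\gamma(e^u-1)$ is handled by integration by parts, which requires $u_x\in L^\infty$; this holds for $s>3/2$ and is the origin of the threshold. Combining these gives an energy inequality
\[
\frac{d}{dt}\norm{w}_{H^s}^2\le C\bigl(\norm{u}_{H^s}\bigr)\norm{u}_{H^s}^2 .
\]

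Second, we relate the norms of $w$ and $u$. We have $v=\log(1+w)$, which makes sense locally in time because $e^{v}$ is invertible with $e^{-v}\in 1+H^s$. Writing $u=2\Rea v$ (plus the harmless zero mode) and using Moser estimates for $\log$ and $\exp$, we obtain $\norm{u}_{H^s}\sim_R \norm{w}_{H^s}$. One technical point is that $P_+$ is not bounded on $L^\infty$, but $v\in H^s\subset L^\infty$, so this causes no problem. This equivalence closes the a priori bound on $[0,T]$ with $T=T(s,|\gamma|,R)$. We will also prove persistence of higher regularity with the same $T$, since the $H^\sigma$ estimate is linear in the top norm.

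Third, we construct solutions by parabolic regularization. We solve $u_t+\Hh u_{xx}+u_x\Hh u_x+\gamma(e^u-1)u_x=\eps u_{xx}$ for $\eps>0$ by a contraction argument. The gauge computation still goes through, because $\eps u_{xx}$ projects to $\eps v_{xx}$, and the extra term $\eps e^v v_x^2$ is compensated by the dissipative energy. This yields bounds uniform in $\eps$ on the common interval. Next we need a difference estimate in a weaker norm. Estimating the difference of two gauged variables, for instance in $L^2$ or $H^{s-1}$, costs at most one derivative, absorbed by the higher-norm bound. This gives Cauchy sequences as $\eps\to0$ in $C([0,T];H^{s-1})$, hence existence by interpolation, and uniqueness via the same difference estimate. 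Time continuity in $C^1H^{s-2}$ then follows from the equation.

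The main obstacle is the continuity of the data-to-solution map in $C([-T,T];H^s)$ and strong continuity in time, since the energy estimates lose the top derivative in differences. The approach will be Bona--Smith together with frequency envelopes. We choose an envelope $c_k$ for $u_0$, regularize the data $u_0^{(n)}=P_{\le n}u_0$, and prove that the solutions obey $\norm{P_ku}_{H^s}\lesssim c_k$ uniformly. This requires a frequency-localized version of the energy estimate for $w$, where the commutator estimate is the delicate point. We also need the high-norm bound $\norm{u^{(n)}}_{H^{s+1}}\lesssim 2^{n}c_n$ and the low-norm difference bound $\norm{u^{(n+1)}-u^{(n)}}_{L^2}\lesssim 2^{-sn}c_n$. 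Interpolation then gives convergence in $H^s$ uniformly in time, and for nearby data $u_0^j\to u_0$ with uniform envelopes we deduce continuous dependence. Negative times are handled by the symmetry $u(t,x)\mapsto u(-t,-x)$, which preserves the equation.
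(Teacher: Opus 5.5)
Your architecture matches the paper's: the gauge $w=e^{P_+u}-1$, parabolic regularization with the $\eps$-term absorbed by the dissipation, a low-norm difference estimate, and frequency envelopes. The gap is in the commutator bound you call the key estimate, which does not close the energy inequality. You apply $\norm{[P_+,f]g_x}_{H^s}\lesssim \norm{f}_{H^{s+1}}\norm{g}_{L^\infty}+\norm{f_x}_{L^\infty}\norm{g}_{H^s}$ with $f=e^u-1$ and $g=u$. Even granting this bound, its first term requires $e^u-1\in H^{s+1}$, i.e.\ $u\in H^{s+1}$, whereas only $\norm{u}_{H^s}$ is controlled. So $\frac{d}{dt}\norm{w}_{H^s}^2\le C(\norm{u}_{H^s})\norm{u}_{H^s}^2$ does not follow. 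The gauge has removed the loss coming from $u_x\Hh u_x$, but your estimate reintroduces a full derivative loss in $\gamma(1+w)[P_+,e^u-1]u_x$. The same defect affects your $H^\sigma$ persistence claim, which would need $H^{\sigma+1}$, and the frequency-localized estimate you identify as delicate.

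The missing idea is the one-sided frequency geometry of this specific commutator, which lets you put exactly $s$ derivatives on the coefficient instead of $s+1$. The Fourier transform of $[P_+,f]g$ at $\xi$ is $\frac{1}{2\pi}\int(\mathbf 1_{\xi>0}-\mathbf 1_{\eta>0})\widehat f(\xi-\eta)\widehat g(\eta)\,d\eta$. So only pairs with $\xi$ and $\eta$ of opposite sign contribute, and for those $|\xi-\eta|=|\xi|+|\eta|\ge|\xi|$. Hence $\langle\xi\rangle^\sigma\le\langle\xi-\eta\rangle^\sigma$, and Young's inequality gives $\norm{[P_+,f]g}_{H^\sigma}\lesssim\norm f_{H^\sigma}\norm{\widehat g}_{L^1}\lesssim\norm f_{H^\sigma}\norm g_{H^r}$ for any $r>1/2$; this is Lemma~\ref{lem:hardycomm}. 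Taking $g=u_x$ and $r=s-1>1/2$ yields $\norm{[P_+,e^u-1]u_x}_{H^s}\lesssim\norm{e^u-1}_{H^s}\norm u_{H^s}$. Together with Kato--Ponce for $[J^s,e^u-1]w_x$, this closes your energy estimate.

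The same reasoning, with the frequency of $f$ forced to be $\max\{k,\ell\}+O(1)$, gives the envelope version $\norm{[P_+,f]g_x}_{H_c^\sigma}\lesssim\norm f_{H_c^\sigma}\norm g_{H^s}$ (Lemma~\ref{lem:envelope-hardy}). You need that for your Bona--Smith step. Separately, recover $u$ as $2\ln|1+w|$ rather than through a complex logarithm of $1+w$, which has branch problems when $\Hh u$ is large; $|1+w|=e^{u/2}$ then gives the needed lower bound for free. With these corrections, your Cauchy-in-$L^2$ existence argument and Bona--Smith telescoping are reasonable alternatives to the paper's compactness step and low/high frequency splitting.
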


As a standard consequence of the local well-posedness theory and the usual continuation argument, we obtain the following blow-up alternative. Since the argument is classical, we omit the proof.

\begin{corollary}\label{Blow up alternative}
Let $\gamma\in\R$, $s>3/2$, and $u_0\in H^s(\R)$. Let
$$
u\in C((T_-,T_+);H^s)
$$
be the maximal solution to \eqref{eq:embo} with $u(0)=u_0$. If $T_+<\infty$, then
$$
\limsup_{t\nearrow T_{+}}\norm{u(t)}_{H^s}=+\infty.
$$
An analogous result holds if $T_->-\infty$.
\end{corollary}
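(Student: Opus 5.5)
The blow-up alternative follows from Theorem~\ref{thm:lwp} by the standard continuation argument, combined with uniqueness. The key feature we use is that the existence time $T=T(s,|\gamma|,R)$ depends only on an upper bound $R$ for the $H^s$ norm of the data, and not on the data itself.

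The maximal solution is defined as follows. Uniqueness in Theorem~\ref{thm:lwp} implies that any two solutions in $C(I;H^s)$ with the same value at some time agree on their common interval. This uses time-translation invariance of \eqref{eq:embo}: the local theorem applies with initial time $t_0$ in place of $0$. Consequently, solutions defined on intervals containing $0$ glue together, and the union of their domains gives a maximal interval $(T_-,T_+)$.

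We argue by contradiction. Suppose $T_+<\infty$ but
$$
\limsup_{t\nearrow T_+}\norm{u(t)}_{H^s}<\infty.
$$
Then there are $R>0$ and $t_1<T_+$ such that $\norm{u(t)}_{H^s}\le R$ for all $t\in[t_1,T_+)$. Let $T=T(s,|\gamma|,R)$ be the time provided by Theorem~\ref{thm:lwp}. Choose
$$
t_0\in\bigl(\max(t_1,T_+-T/2),\,T_+\bigr),
$$
and solve \eqref{eq:embo} with data $u(t_0)$ at time $t_0$. Since $\norm{u(t_0)}_{H^s}\le R$, this gives a solution $v\in C([t_0-T,t_0+T];H^s)$.

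By uniqueness, $v=u$ on the overlap $[t_0,T_+)$. Hence the function equal to $u$ on $(T_-,t_0]$ and to $v$ on $[t_0,t_0+T]$ is a solution in $C((T_-,t_0+T];H^s)$. It also belongs to $C^1(\,\cdot\,;H^{s-2})$, since it satisfies the equation in the distributional sense on each piece and both pieces are continuous into $H^s$ at $t_0$. Because $t_0+T>T_++T/2>T_+$, this extension contradicts the maximality of $T_+$.

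The case $T_->-\infty$ is handled by the same argument, using the time interval $[-T,T]$ of Theorem~\ref{thm:lwp} backward in time.

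The only point requiring some care is that uniqueness holds in the class $C([-T,T];H^s)$ on arbitrary subintervals, and not just on the interval $[-T,T]$ produced by the theorem. This is what makes the gluing legitimate. It follows by applying the uniqueness part of Theorem~\ref{thm:lwp} on short intervals around each time, and then using a connectedness argument: the set of times where two solutions agree is nonempty, closed by continuity, and open by local uniqueness.
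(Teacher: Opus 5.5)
Your proposal is correct. It is precisely the classical continuation argument the paper invokes when it omits the proof, and its key input is that $T=T(s,|\gamma|,R)$ depends only on the size of the data. The uniqueness on arbitrary subintervals needed for the gluing is supplied directly by Lemma~\ref{lem:rough-stability}: it holds on any interval containing the initial time, and its hypotheses are automatic for $H^s$-bounded real solutions since $|1+U_j|=e^{u_j/2}$, so you can bypass your connectedness argument, whose ``open'' step would otherwise require exactly this short-interval uniqueness.
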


When $\gamma=0$, we obtain the sharp local well-posedness result.

\begin{theorem}\label{corolario s>sc}
Let $\gamma=0$. Then the Cauchy problem \eqref{eq:embo} is locally well-posed in $H^s(\R)$ for every $s>s_c$.
\end{theorem}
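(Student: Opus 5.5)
The plan is to exploit the Cole--Hopf structure described in the introduction. For $\gamma=0$ it should linearize \eqref{eq:embo} completely.

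\textbf{Step 1: the projected equation.} Let $u$ be real-valued and set $w:=P_+u$. Then $u=w+\bar w$, since the zero frequency carries no $L^2$ mass. We have $\Hh P_+=-iP_+$ and $\Hh P_-=iP_-$, so
$$
\Hh u_{xx}=-iw_{xx}+i\bar w_{xx},\qquad u_x\Hh u_x=i\bar w_x^2-iw_x^2 .
$$
A product of two positive-frequency functions has positive frequencies, so $P_+(\bar w_x^2)=0$ and $P_+(w_x^2)=w_x^2$. Applying $P_+$ to \eqref{eq:embo} gives
$$
w_t-iw_{xx}-iw_x^2=0 .
$$
Put $v:=e^{w}-1$. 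Then $v_t=e^w w_t$ and $v_{xx}=e^w(w_{xx}+w_x^2)$, hence
$$
v_t-iv_{xx}=0,\qquad v(t)=e^{it\partial_x^2}v_0,\qquad v_0=e^{P_+u_0}-1 .
$$
Conversely, if $v$ solves this linear equation and $w=\log(1+v)$ is a well-defined element of $P_+H^s$, then $u:=2\Rea w$ solves \eqref{eq:embo}. This holds because the $P_-$ part of the equation is the complex conjugate of the $P_+$ part.

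\textbf{Step 2: the forward map.} For $s>1/2$, $H^s$ is an algebra and $P_+H^s$ is closed under products. The series $e^w-1=\sum_{n\ge1}w^n/n!$ therefore converges in $P_+H^s$. It is locally Lipschitz there, with bounds of the form $\norm{w}_{H^s}e^{C\norm{w}_{H^s}}$. The free flow $e^{it\partial_x^2}$ is unitary on $P_+H^s$, continuous in $t$, and preserves positivity of frequencies.

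\textbf{Step 3: the inverse map (main obstacle).} The hard step is showing that $w(t)=\log(1+v(t))$ exists and lies in $P_+H^s$ for $|t|\le T$, with $T$ uniform on bounded sets. My first attempt would use the Hardy space picture.
\begin{itemize}[leftmargin=*]
\item Functions in $P_+H^s$ extend holomorphically to the upper half-plane, are bounded by their sup on $\R$, and tend to $0$ as $\operatorname{Im}z\to\infty$.
\item At $t=0$, $1+v_0=e^{w_0}$ has no zeros in the closed upper half-plane.
\item Since $s>1/2$, $t\mapsto v(t)$ is continuous into $L^\infty(\R)$. Set $\delta:=\inf_{\R}|1+v_0|>0$. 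For $|t|\le T$ with $\sup_x|v(t)-v_0|<\delta/2$, the function $1+v(t)$ stays away from $0$ on $\R$.
\item By the argument principle, the number of zeros of $1+v(t,z)$ in the upper half-plane equals the winding number of $x\mapsto 1+v(t,x)$. That winding number is continuous in $t$, hence constant, hence $0$.
\end{itemize}
So $\log(1+v(t,z))$ is holomorphic in the upper half-plane, with the branch fixed by its decay at infinity.

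It remains to place $\log(1+v(t))$ in $P_+H^s$. Write $\log(1+v)=v\,F(v)$, where $F$ is analytic on a neighbourhood of the range of $v$, a set bounded away from $-1$. Composition estimates in $H^s$, $s>1/2$, then give $\log(1+v)\in H^s$. Holomorphy together with $L^2$ boundary values puts it in the Hardy space, i.e.\ in $P_+H^s$. The time $T$ depends only on $\delta$ and on a modulus of continuity of $t\mapsto v(t)$ in $L^\infty$. Both are controlled uniformly for data in a small $H^s$-neighbourhood of $u_0$, since $\delta$ depends continuously on $v_0\in L^\infty$.

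\textbf{Step 4: conclusion.} Existence follows from the explicit formula $u(t)=2\Rea\log\bigl(1+e^{it\partial_x^2}(e^{P_+u_0}-1)\bigr)$. Persistence of higher regularity $H^\sigma$ follows from the same composition estimates.

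For uniqueness, take any solution $u\in C([-T,T];H^s)$. Then $w=P_+u$ solves the projected equation in distributions, since $u_x\in H^{s-1}$ and the product $w_x^2$ makes sense in the relevant negative-regularity space for $s>1/2$. Consequently $v=e^w-1$ solves the free Schrödinger equation with data $v_0$, so $v$, and hence $u$, is determined on the common interval.

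Continuity of the data-to-solution map follows because each map in the chain is continuous:
$$
u_0\mapsto P_+u_0\mapsto e^{P_+u_0}-1\mapsto e^{it\partial_x^2}(\cdot)\mapsto \log(1+\cdot)\mapsto 2\Rea(\cdot).
$$
On the Schrödinger step this continuity holds uniformly in $t\in[-T,T]$. The uniformity of $T$ in Step 3 then lets us compose these maps on a fixed time interval.
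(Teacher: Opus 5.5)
Your overall route is the paper's: project onto positive frequencies, linearize by $v=e^{P_+u}-1$ to the free Schr\"odinger equation, and invert with a logarithm. The gap is in Step 3, where you write $\log(1+v)=vF(v)$ with $F$ analytic on a neighbourhood of the range of $v$, ``a set bounded away from $-1$''.

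Staying away from $-1$ does not give a single-valued branch of $\log(1+\zeta)$ near the range. The range can surround $-1$ even when the total winding number is $0$. Since $1+v_0=e^{u_0/2}e^{i\Hh u_0/2}$, this happens when the phase $\Hh u_0/2$ increases by more than $2\pi$ while $e^{u_0/2}=2$ and later returns to $0$ while $e^{u_0/2}\approx 3$. Such data exist: take $P_+u_0$ to be the boundary trace of a conformal map of the upper half-plane onto a smooth Jordan domain $\Omega$, sending $\infty$ to $0\in\partial\Omega$, whose boundary follows this path in the coordinates $(u_0/2,\Hh u_0/2)$.

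Then the range of $v_0$ contains the circle $\{|\zeta+1|=2\}$. So no branch of $\log(1+\zeta)$ exists on any neighbourhood of the range of $v_0$, nor on any neighbourhood containing the ranges of all $v(t)$, $|t|\le T$. Your argument-principle step still yields a holomorphic logarithm in the upper half-plane, whose trace is the continuous lift of $x\mapsto 1+v(t,x)$. But it gives no $H^s$ bound for this lift. The $H^s$ membership, the continuity in $t$, and the chain of continuous maps in Step 4 all rest on exactly this composition step.

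The missing idea is the paper's: factor out the initial datum. With $w_0=P_+u_0$,
\[
1+v(t)=e^{w_0}\bigl(1+z(t)\bigr),\qquad z(t):=e^{-w_0}\bigl(v(t)-v_0\bigr)\in P_+H^s,
\]
and $\norm{z(t)}_{L^\infty}\le e^{\norm{u_0}_{L^\infty}/2}\norm{v(t)-v_0}_{L^\infty}<\tfrac12$ for $|t|\le T$. Set $w(t):=w_0+\log(1+z(t))$ with the principal branch, i.e.\ $\log(1+z)=\sum_{n\ge1}(-1)^{n+1}z^n/n$. Then:
\begin{itemize}
\item The $H^s$ bound is a composition estimate with a function analytic on the unit disc.
\item Positive Fourier support follows because each $z^n$ has Fourier support in $[0,\infty)$ and the series converges in $L^2$, since $\norm{z^n}_{L^2}\le 2^{1-n}\norm z_{L^2}$.
\item $e^{w(t)}=1+v(t)$, so $u=2\Rea w$ satisfies $P_+u=w$.
\end{itemize}
This removes the need for the Hardy-space and argument-principle discussion. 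It also makes $u_0\mapsto w(t)$ an explicit composition of locally Lipschitz maps on $H^s$.

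Moreover, $T$ can be chosen depending only on a bound for $\norm{u_0}_{H^s}$, since $\norm{(e^{it\partial_x^2}-1)f}_{H^{s'}}\le 2|t|^{(s-s')/2}\norm f_{H^s}$ for $1/2<s'<s\le s'+2$. With this repair, the rest of your argument matches the paper's proof. The paper is equally brief about the uniqueness step for $1/2<s<1$, namely what the equation and the chain rule mean at that regularity.
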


\begin{remark}
As in the classical KdV--mKdV correspondence, the Miura-type transformation \eqref{Miura Transform BO} maps real-valued solutions of \eqref{eq:embo} into complex-valued solutions of the BO equation \eqref{eq: BO}; see \cite{ScoufisCosgrove1995, MR4400881}. For $s>1/2$, it maps $H^s(\R\to\R)$ into $H^{s-1}(\R\to\C)$. Thus, Theorem \ref{thm:lwp} produces BO solutions in $H^{s-1}(\R\to\C)$ for $s>3/2$. In the case $\gamma=0$, Theorem \ref{corolario s>sc} gives solutions for every $s>1/2$, and hence the Miura transformation yields complex-valued BO solutions in $H^\sigma(\R\to\C)$ for every $\sigma>-1/2$, matching the sharp Sobolev threshold for the BO equation; see \cite{KillipLaurensVisan2024}.
\end{remark}


\subsection{Outline of the method}

We briefly describe the main steps in the proof of Theorem \ref{thm:lwp}. First, we introduce the Cole--Hopf type change of variables
$$
U:=e^{P_+u}-1.
$$
The positive-frequency component is then transformed into
$$
U_t-iU_{xx}+\gamma (e^u-1)U_x
=
-\gamma(1+U)[P_+,e^u-1]u_x.
$$
This eliminates the apparent derivative loss and leaves a nonlinear Schr\"odinger equation with a commutator structure in the remaining nonlinear term.

Second, the frequency structure of the commutator
$$
[P_+,e^u-1]u_x
$$
forces the frequency of $e^u-1$ to be at least comparable to that of the differentiated factor. This allows the derivative to be controlled by the coefficient and gives, for $ s>3/2$,
$$
\norm{[P_+,e^u-1]u_x}_{H^s}
\lesssim
\norm{e^u-1}_{H^s}\norm{u}_{H^s}.
$$
Together with the standard estimates for $e^u-1$, this yields an $H^s$ bound for the transformed equation depending only on the $\|u\|_{H^s}$. The transformation can then be inverted through
$$
u=2\ln|1+U|.
$$

Third, we construct solutions by means of a \textit{parabolic regularization} method. We add a artificial dissipative term, solve the regularized equation for smooth data, and derive estimates that are uniform in the regularization parameter. These bounds provide a common existence time depending only on the size of the initial data in $H^s$. We also keep $|1+U|$ away from zero so that the inverse transformation remains well defined. Compactness then gives a solution of the original equation.

The remaining part of the proof is devoted to uniqueness and continuous dependence. For two solutions, a direct estimate of their difference in $H^s$ would again place a derivative on the difference. We therefore first estimate the difference in a weaker Sobolev norm, which yields uniqueness and stability at lower regularity.

To recover the full topology, we use frequency envelopes. If $(c_k)_{k \geq 0}$ describes the dyadic size of the initial data, the energy estimates show that the same dyadic decay is propagated by the solution, \textit{i.e.}
$$
\sup_{|t|\le T}
2^{sk}\norm{P_k u(t)}_{L^2}
\lesssim c_k.
$$
In particular, high-frequency tails remain uniformly small when the corresponding tails of the initial data are small.

We then split the difference of two solutions into low and high frequencies. The low-frequency part is controlled by the weaker stability estimate, while the high-frequency parts are controlled by the propagated envelopes. This gives continuity of the data-to-solution map in $H^s$.

Finally, the same argument propagates stronger frequency decay. Hence, if $u_0\in H^\sigma$ with $\sigma>s$, the solution remains in $H^\sigma$ on the same time interval. The frequency-envelope bounds also give strong continuity in time in the corresponding Sobolev spaces.

\begin{remark}
    The case $\gamma=0$ is considerably simpler. In fact, the transformed variable $U=e^{P_+u}-1$ satisfies the free Schrödinger equation, and the solution can be recovered locally by preserving the positive-frequency structure and keeping $1+U$ away from zero (see Section \ref{rem:gamma0}).
\end{remark}

\begin{remark}
To the best of our knowledge, this is the first Sobolev well-posedness theory for the e-mBO equation obtained by PDE methods, without using inverse-scattering or other integrability-based techniques.
\end{remark}

\begin{remark}
The condition $s>3/2$ in Theorem \ref{thm:lwp} arises from the energy estimates. It would be interesting to investigate whether the regularity threshold can be lowered by combining the ideas developed here with more refined energy estimates; see, for instance, \cite{MR3397003, MR3906854, MolinetPilodVento2025, MR5027534}.
\end{remark}

\subsection*{Organization of the paper}

Section \ref{sec:notation} fixes notation and records some standard estimates. Section \ref{sec:structure} analyzes the frequency decomposition, the quadratic resonance, and the Cole--Hopf type transform. Section \ref{sec:commutator} establishes the commutator, composition, and transformed energy estimates.  Sections \ref{sec:existence} and \ref{sec:stability} construct the solution and prove uniqueness, continuous dependence and persistence of solutions. Finally, in the Section \ref{rem:gamma0} proves Theorem \ref{corolario s>sc}.

\subsection*{Acknowledgments}
The author was partially supported  by São Paulo Research Foundation (FAPESP) grant No.~2024/15587-1. The author would also like to thank Professor Felipe Linares for reading an earlier version of this manuscript and for his valuable comments and suggestions.

\section{Notations and preliminary lemmas}\label{sec:notation}

We write $A\lesssim B$ if $A\le CB$ for some constant $C>0$ depending only on fixed parameters, and $A\sim B$ if both $A\lesssim B$ and $B\lesssim A$. Sometimes, when the constant depends on a parameter, we indicate this by a subscript, for example $A\lesssim_s B$.

For $s\in\R$, we define the Bessel potentials of order $-s$ by \footnote{Along this work, we use the Fourier transform convention $\widehat f(\xi)=\mathcal{F}[f](\xi):=\int_\R e^{-i x\xi}f(x)\dd x.$}
$$
\widehat{J^sf}(\xi):= \langle\xi\rangle^s \widehat{f}(\xi)=\left(1+|\xi|^2\right)^{s/2} \widehat{f}(\xi),
$$
 We will consider $H^s:=H^s(\R)$, with $s \in \R$, the usual inhomogeneous Sobolev space, equipped with the norm
$$
\|f\|_{H^s}: =\|J^s f \|_{L^2}.
$$

For $t\in \R$, we denote by $S(t)=e^{it\Hh\partial_x^2}$ the free group associated with the linearized Benjamin--Ono equation, i.e.
$$
\widehat{S(t)f}(\xi)
:=
e^{-i t\xi|\xi|}\widehat{f}(\xi).
$$

Let $P_+$ and $P_-$ denote the projections onto the positive and negative Fourier frequencies, respectively. They are given by
$$
P_{\pm}f=\frac12(f\pm i\Hh f)=\left(\mathbf 1_{\R_\pm}\widehat{f}\right)^\vee.
$$
The basic identities are
$$
P_++P_-=I,
\quad
P_\pm^2=P_\pm,
\quad
P_+P_-=P_-P_+ \equiv 0,
$$
and
$$
\Hh P_\pm= \mp i P_\pm.
$$
For real-valued $f$, one also has $P_-f=\overline{P_+f}$.

\begin{proposition}\label{prop:null}
For every sufficiently regular $f$,
$$
f_x\Hh f_x
=-i(P_+f_x)^2+i(P_-f_x)^2.
$$
\end{proposition}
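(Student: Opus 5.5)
We will prove the identity by decomposing $f_x$ into its positive and negative frequency parts and using that $\Hh$ acts diagonally on them. We will only use the identities recorded just before the statement: $P_++P_-=I$ and $\Hh P_\pm=\mp iP_\pm$. We will also use that $P_\pm$ commute with $\partial_x$, since both are Fourier multipliers.

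Set $a:=P_+f_x$ and $b:=P_-f_x$, so that $f_x=a+b$. Applying $\Hh$ and using $\Hh P_\pm=\mp iP_\pm$ gives
$$
\Hh f_x=\Hh P_+f_x+\Hh P_-f_x=-ia+ib .
$$
Multiplying pointwise, we get
$$
f_x\Hh f_x=(a+b)(-ia+ib)=-ia^2+iab-iab+ib^2=-i a^2+i b^2 .
$$
The cross terms cancel exactly, and this is the claimed identity $f_x\Hh f_x=-i(P_+f_x)^2+i(P_-f_x)^2$.

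There is no real obstacle here. The only point needing care is the meaning of ``sufficiently regular''. We need the products $a^2$, $b^2$ and $ab$ to make sense, for instance $f\in H^s$ with $s>3/2$, so that $f_x\in H^{s-1}\subset L^\infty\cap L^2$ and $P_\pm f_x\in H^{s-1}$. For general $f$ we can argue by density: first prove the identity for Schwartz $f$, then pass to the limit, using the continuity of $P_\pm$ on $H^{s-1}$ and the algebra property of $H^{s-1}$. Since the computation is purely pointwise once $a$ and $b$ are defined, it holds in any setting where these products are defined. For real $f$ we additionally have $b=\overline a$, so the right-hand side equals $2\,\mathrm{Im}(a^2)$ and is real, which is a consistency check.
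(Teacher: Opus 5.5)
Your proposal is correct and follows essentially the same route as the paper: decompose $f_x=P_+f_x+P_-f_x$, use $\Hh P_\pm=\mp iP_\pm$ to write $\Hh f_x=-iP_+f_x+iP_-f_x$, and multiply out so that the cross terms cancel. Your added remarks on the regularity needed for the products and on the real-valuedness check $-ia^2+i\overline{a}^{\,2}=2\,\mathrm{Im}(a^2)$ are accurate but not required by the argument.
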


\begin{proof}
Since we can write
$$
f_x=P_+f_x+P_-f_x,
$$
and
$$
\Hh f_x=-i P_+f_x +i P_-f_x,
$$
we have
$$
f_x\Hh f_x
=(P_+f_x +P_-f_x)(-i P_+f_x +iP_-f_x)
=-i(P_+f_x)^2+i(P_-f_x)^2.
$$
\end{proof}

The following lemma collects the basic composition and tame estimates for the exponential nonlinearity that will be used throughout the paper.

\begin{lemma}\label{lem:exp}
Let $s>1/2$. There is $C_s\ge1$ such that, for $f,g\in H^s$
\begin{align}
\norm{e^f-1}_{H^s}
&\le \norm f_{H^s}e^{C_s\norm f_{H^s}},\label{eq:exp1}\\
\norm{e^f-f-1}_{H^s}
&\le C_s\norm f_{H^s}^2e^{C_s\norm f_{H^s}},\label{eq:exp2}\\
\norm{e^f-e^g}_{H^s}
&\le C_s e^{C_s(\norm f_{H^s}+\norm g_{H^s})}\norm{f-g}_{H^s}.
\label{eq:exp3}
\end{align}
If $\sigma>s>1/2$, then on bounded subsets of $H^s\cap H^\sigma$ one has
$$
\norm{e^f-1}_{H^\sigma}
\le C_{s,\sigma}(\norm f_{H^s})\norm f_{H^\sigma}.
$$
\end{lemma}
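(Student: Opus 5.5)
The plan is to prove the four estimates of Lemma~\ref{lem:exp} using power series and the algebra property of $H^s$. For $s>1/2$, $H^s$ is a Banach algebra, so there is $A_s\ge1$ with $\norm{fg}_{H^s}\le A_s\norm f_{H^s}\norm g_{H^s}$. By induction, $\norm{f^n}_{H^s}\le A_s^{n-1}\norm f_{H^s}^n$. The series
$$e^f-1=\sum_{n\ge1}\frac{f^n}{n!}$$
then converges absolutely in $H^s$, and it agrees with the pointwise exponential, since $H^s\hookrightarrow L^\infty$ and each partial sum converges uniformly. This gives
$$
\norm{e^f-1}_{H^s}\le\sum_{n\ge1}\frac{A_s^{n-1}\norm f_{H^s}^n}{n!}
\le \norm f_{H^s}\sum_{m\ge0}\frac{(A_s\norm f_{H^s})^m}{m!}
=\norm f_{H^s}e^{A_s\norm f_{H^s}},
$$
which is \eqref{eq:exp1} with $C_s\ge A_s$. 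Here we used $1/(m+1)!\le1/m!$. Starting the series at $n=2$ gives \eqref{eq:exp2} in the same way, with the bound $\norm f_{H^s}^2 A_s e^{A_s\norm f_{H^s}}$.

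For \eqref{eq:exp3}, I would use the identity
$$f^n-g^n=(f-g)\sum_{k=0}^{n-1}f^kg^{n-1-k}.$$
The algebra property then gives
$$\norm{f^n-g^n}_{H^s}\le nA_s^{n-1}\bigl(\norm f_{H^s}+\norm g_{H^s}\bigr)^{n-1}\norm{f-g}_{H^s}.$$
Summing against $1/n!$ yields the bound $A_s\,e^{A_s(\norm f_{H^s}+\norm g_{H^s})}\norm{f-g}_{H^s}$, as required. Alternatively, one can write $e^f-e^g=(f-g)\int_0^1 e^{g+\theta(f-g)}\dd\theta$ and treat $e^{h}=1+(e^h-1)$, noting that the constant $1$ is handled directly.

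The tame estimate for $\sigma>s$ is the only step needing more than the algebra property. If we used the algebra property in $H^\sigma$, the bound would be $e^{C\norm f_{H^\sigma}}$, and this is not allowed, since the constant may only depend on $\norm f_{H^s}$. I would instead use the Kato--Ponce type tame product estimate
$$\norm{fg}_{H^\sigma}\lesssim_\sigma\norm f_{H^\sigma}\norm g_{L^\infty}+\norm f_{L^\infty}\norm g_{H^\sigma},$$
together with $\norm{\cdot}_{L^\infty}\lesssim\norm\cdot_{H^s}$. By induction this gives
$$\norm{f^n}_{H^\sigma}\le n\,B^{n-1}\norm f_{L^\infty}^{n-1}\norm f_{H^\sigma},$$
with $B=B_\sigma\ge1$. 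The inductive step $f^{n}=f\cdot f^{n-1}$ yields the bound $B(\norm f_{H^\sigma}\norm f_{L^\infty}^{n-1}+\norm f_{L^\infty}\norm{f^{n-1}}_{H^\sigma})$, and the factor $n$ absorbs the two terms. Summing over $n\ge1$ with weight $1/n!$ gives
$$\norm{e^f-1}_{H^\sigma}\le e^{B\norm f_{L^\infty}}\norm f_{H^\sigma}\le e^{BC\norm f_{H^s}}\norm f_{H^\sigma},$$
so the claim holds with $C_{s,\sigma}(r)=e^{C r}$.

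The main obstacle is ensuring the induction really is linear in $\norm f_{H^\sigma}$. The tame estimate is exactly what prevents the higher norm from entering the exponent. One could instead argue through the Moser/chain-rule estimate for $F(f)$ with $F(z)=e^z-1$, $F(0)=0$, but the series argument is self-contained.
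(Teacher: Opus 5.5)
Your proof is correct and follows the same power-series argument as the paper: the algebra bound $\norm{f^n}_{H^s}\le C_s^{n-1}\norm{f}_{H^s}^n$ summed against $1/n!$ for \eqref{eq:exp1}--\eqref{eq:exp2}, and, for the $H^\sigma$ bound, an induction from a tame product estimate that keeps $\norm{f^n}_{H^\sigma}$ linear in $\norm{f}_{H^\sigma}$, with a factor $n$ absorbed by $1/n!$. The differences are minor: for \eqref{eq:exp3} you sum the factorization $f^n-g^n=(f-g)\sum_k f^kg^{n-1-k}$ where the paper writes $e^f-e^g=(f-g)+(f-g)\int_0^1(e^{w_\theta}-1)\dd\theta$ and invokes \eqref{eq:exp1}, and you use the $L^\infty$--$H^\sigma$ Kato--Ponce product estimate rather than the $H^s$--$H^\sigma$ one, which makes the induction slightly cleaner (since $\norm{f^{n-1}}_{L^\infty}\le\norm{f}_{L^\infty}^{n-1}$ with no constant) and gives a constant depending only on $\norm{f}_{L^\infty}$.
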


\begin{proof}
Since $s>1/2$, we have
\begin{equation}\label{eq:Hs-algebra-exp}
\norm{fg}_{H^s}
\le C_s\norm f_{H^s}\norm g_{H^s}.
\end{equation}
Iterating \eqref{eq:Hs-algebra-exp}, for every integer $n\ge1$,
$$
\norm{f^n}_{H^s}
\le
C_s^{n-1}\norm f_{H^s}^n.
$$
Therefore, using
$$
e^f-1=\sum_{n=1}^\infty\frac{f^n}{n!},
$$
we obtain
\begin{align*}
\norm{e^f-1}_{H^s}
&\le
\sum_{n=1}^\infty\frac{1}{n!}\norm{f^n}_{H^s}\le
\sum_{n=1}^\infty
\frac{C_s^{n-1}}{n!}\norm f_{H^s}^n=
\norm f_{H^s}
\sum_{n=1}^\infty
\frac{(C_s\norm f_{H^s})^{n-1}}{n!}\le
\norm f_{H^s}e^{C_s\norm f_{H^s}},
\end{align*}
which proves \eqref{eq:exp1}. Similarly,
$$
e^f-f-1
=
\sum_{n=2}^\infty\frac{f^n}{n!},
$$
and hence
\begin{align*}
\norm{e^f-f-1}_{H^s}
&\le
\sum_{n=2}^\infty
\frac{C_s^{n-1}}{n!}\norm f_{H^s}^n\le
C_s\norm f_{H^s}^2
\sum_{n=2}^\infty
\frac{(C_s\norm f_{H^s})^{n-2}}{n!}\le
C_s\norm f_{H^s}^2
e^{C_s\norm f_{H^s}},
\end{align*}
which gives \eqref{eq:exp2}. For the difference estimate, write
$$
e^f-e^g
=
(f-g)\int_0^1e^{g+\theta(f-g)}\,\dd\theta.
$$
Since the constant function $1$ does not belong to $H^s$, it is convenient to write this as
$$
e^f-e^g
=
(f-g)
+
(f-g)\int_0^1
\bigl(e^{g+\theta(f-g)}-1\bigr)\,\dd\theta.
$$
Set
$$
w_\theta=g+\theta(f-g)
=(1-\theta)g+\theta f.
$$
Then
$$
\norm{w_\theta}_{H^s}
\le
\norm f_{H^s}+\norm g_{H^s},
\quad \text{for} \quad0\le\theta\le1.
$$
By \eqref{eq:exp1},
$$
\norm{e^{w_\theta}-1}_{H^s}
\le
\norm{w_\theta}_{H^s}
e^{C_s\norm{w_\theta}_{H^s}}
\le
(\norm f_{H^s}+\norm g_{H^s})
e^{C_s(\norm f_{H^s}+\norm g_{H^s})}.
$$
Using again the algebra property,
\begin{align*}
\norm{e^f-e^g}_{H^s}
&\le
\norm{f-g}_{H^s}
+
C_s\norm{f-g}_{H^s}
\int_0^1
\norm{e^{w_\theta}-1}_{H^s}\,\dd\theta\le
C_s
e^{C_s(\norm f_{H^s}+\norm g_{H^s})}
\norm{f-g}_{H^s},
\end{align*}
after increasing $C_s$ if necessary. This proves \eqref{eq:exp3}.

It remains to prove the tame estimate. Let $\sigma>s>1/2$. We use the standard estimate
\begin{equation}\label{eq:tame-product-exp}
\norm{fg}_{H^\sigma}
\le
C_{s,\sigma}
\left(
\norm f_{H^\sigma}\norm g_{H^s}
+
\norm f_{H^s}\norm g_{H^\sigma}
\right).
\end{equation}
An induction on $n$ gives
\begin{equation}\label{eq:power-tame}
\norm{f^n}_{H^\sigma}
\le
n C_{s,\sigma}^{\,n-1}
\norm f_{H^s}^{\,n-1}
\norm f_{H^\sigma},
\quad \text{for} \quad n\ge1.
\end{equation}
Indeed, the claim is immediate for $n=1$. Assuming it for $n$, we apply \eqref{eq:tame-product-exp} to $f^{n+1}=f^n f$ and use
$$
\norm{f^n}_{H^s}
\le
C_s^{n-1}\norm f_{H^s}^n,
$$
which gives \eqref{eq:power-tame}, after enlarging $C_{s,\sigma}$.

We may now sum the exponential series:
\begin{align*}
\norm{e^f-1}_{H^\sigma}
&\le
\sum_{n=1}^\infty
\frac{1}{n!}\norm{f^n}_{H^\sigma}\le
\norm f_{H^\sigma}
\sum_{n=1}^\infty
\frac{n C_{s,\sigma}^{\,n-1}\norm f_{H^s}^{\,n-1}}{n!}=
e^{C_{s,\sigma}\norm f_{H^s}}
\norm f_{H^\sigma}.
\end{align*}
Consequently,
$$
\norm{e^f-1}_{H^\sigma}
\le
C_{s,\sigma}(\norm f_{H^s})
\norm f_{H^\sigma},
$$
where $C_{s,\sigma}:[0,\infty)\to(0,\infty)$ is increasing. In particular, the estimate is uniform on bounded subsets of $H^s\cap H^\sigma$.
\end{proof}

Finally, along this work, we use a standard inhomogeneous Littlewood--Paley decomposition. Let $\chi\in C_c^\infty(\R)$ be even, with $\chi(\xi)=1$ for $|\xi|\le1$ and $\chi(\xi)=0$ for $|\xi|\ge2$. Set
$$
\varphi_0(\xi)=\chi(\xi),
$$
and, for $k\ge1$,
$$
\varphi_k(\xi)
=
\chi\left(2^{-k}\xi\right)
-
\chi\left(2^{-k+1}\xi\right).
$$
We define, for $k\ge0$
$$
\widehat{P_kf}(\xi)
=
\varphi_k(\xi)\widehat f(\xi).
$$
Thus $P_0$ localizes to low frequencies, while $P_k$ localizes to $|\xi|\sim2^k$. We also write
$$
\widehat{P_{\le k}f}(\xi)
=
\chi\left(2^{-k}\xi\right)\widehat f(\xi),
$$
and
$$
P_{>k}=I-P_{\le k}.
$$
With this convention, $f=\sum_{k\ge0}P_kf$ and
$$
\norm f_{H^s}^2
\sim
\sum_{k\ge0}2^{2sk}\norm{P_kf}_{L^2}^2.
$$
\begin{remark}
Throughout the paper we use without further comment the one-dimensional Sobolev embedding and the Kato--Ponce commutator estimate. We refer to \cite{LinaresPonce2015, MR2233925} for these classical tools. In particular, for $\sigma\ge0$,
$$
\norm{fg}_{H^\sigma}
\lesssim
\norm f_{L^\infty}\norm g_{H^\sigma}
+
\norm g_{L^\infty}\norm f_{H^\sigma},
$$
and, for $\sigma>0$,
$$
\norm{[J^\sigma,f]g}_{L^2}
\lesssim
\norm{f_x}_{L^\infty}\norm{J^{\sigma-1}g}_{L^2}
+
\norm f_{H^\sigma}\norm g_{L^\infty}.
$$

\end{remark}

\section{The Cole--Hopf transform}\label{sec:structure}

Applying $P_+$ to \eqref{eq:embo}, using Proposition \ref{prop:null}, and setting $Q:=P_+u$, we obtain
$$
Q_t-i Q_{xx}-i Q_x^2
+\gamma P_+\bigl[(e^u-1)u_x\bigr]=0,
$$
since, $P_+((P_-u_x)^2)=0$. Now, define
$$
U=e^Q-1,
$$
and
$$
a=a(u):=e^u-1.
$$
Since
$$
U_t=(1+U)Q_t,
\quad
U_x=(1+U)Q_x \quad \text{and} \quad U_{xx}=(1+U)(Q_{xx}+Q_x^2),
$$
while
$$
P_+(au_x)
=aP_+u_x+[P_+,a]u_x
=aQ_x+ [P_+,a]u_x,
$$
we obtain the following equation
\begin{equation}\label{eq:U}
U_t-i U_{xx}+\gamma aU_x
=-\gamma(1+U)R,
\end{equation}
where $R:=[P_+,a]u_x$. For real $u$
$$
P_+u=\frac12(u+i\Hh u),
$$
so
$$
|1+U|=|e^{P_+u}|=e^{u/2}.
$$
Consequently, whenever $1+U$ stays away from zero, we have
$$
u=2\ln|1+U|.
$$

\section{Commutators, composition and energy estimates}
\label{sec:commutator}

The key estimate is a one-sided form of the Bedrosian commutator phenomenon and may be viewed as a Sobolev analogue of the commutator bounds of Dawson, McGahagan and Ponce \cite{MR2383514}.

\begin{lemma}\label{lem:hardycomm}
Let $\sigma\ge0$ and $r>1/2$. For Schwartz functions $f,g$,
$$
\norm{[P_{\pm},f]g}_{H^\sigma}
\lesssim
\norm f_{H^\sigma}\norm g_{H^r}.
$$
Consequently, if
$s>3/2$, then
$$
\norm{[P_{\pm},f]\partial_xh}_{H^s}
\lesssim
\norm f_{H^s}\norm h_{H^s}.
$$
\end{lemma}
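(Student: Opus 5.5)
Work on the Fourier side. For $P_+$,
$$
\widehat{[P_+,f]g}(\xi)=\frac{1}{2\pi}\int_\R\bigl(\mathbf 1_{\R_+}(\xi)-\mathbf 1_{\R_+}(\eta)\bigr)\widehat f(\xi-\eta)\widehat g(\eta)\dd\eta ,
$$
and the case of $P_-$ follows since $[P_-,f]=-[P_+,f]$.

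\textbf{Key support observation.} The symbol $\mathbf 1_{\R_+}(\xi)-\mathbf 1_{\R_+}(\eta)$ vanishes unless $\xi$ and $\eta$ have opposite signs (or one of them is zero). In that case $|\xi-\eta|=|\xi|+|\eta|$, so $|\xi-\eta|\ge\max(|\xi|,|\eta|)$. Hence on the support of the kernel,
$$
\langle\xi\rangle^\sigma\le\langle\xi-\eta\rangle^\sigma \ (\sigma\ge0),\qquad \langle\eta\rangle\le\langle\xi-\eta\rangle .
$$
This is the one-sided Bedrosian mechanism: the output frequency and the frequency of $g$ are both dominated by the frequency of $f$.

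\textbf{First estimate.} Since the symbol is bounded by $1$,
$$
\langle\xi\rangle^\sigma|\widehat{[P_+,f]g}(\xi)|\lesssim \int \langle\xi-\eta\rangle^\sigma|\widehat f(\xi-\eta)|\,|\widehat g(\eta)|\dd\eta = \bigl(|\widehat{J^\sigma f}|*|\widehat g|\bigr)(\xi).
$$
By Young's inequality ($L^2*L^1\subset L^2$) and Plancherel,
$$
\|[P_+,f]g\|_{H^\sigma}\lesssim \|f\|_{H^\sigma}\|\widehat g\|_{L^1}.
$$
Cauchy--Schwarz gives $\|\widehat g\|_{L^1}\le\|\langle\cdot\rangle^{-r}\|_{L^2}\|g\|_{H^r}\lesssim\|g\|_{H^r}$ for $r>1/2$. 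This proves the first claim.

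\textbf{Consequence for $s>3/2$.} Take $g=\partial_xh$. Transfer only part of the derivative: use $|\eta|\le\langle\eta\rangle^{1/2}\langle\xi-\eta\rangle^{1/2}$ on the support. For $s>3/2$ fix $\theta\in[0,1]$ with $s-1-\theta>1/2$, for instance $\theta=0$. Then
$$
\langle\xi\rangle^{s}|\eta|\lesssim \langle\xi-\eta\rangle^{s}\langle\eta\rangle .
$$
This reduces the bound to
$$
\|[P_+,f]\partial_xh\|_{H^s}\lesssim \|f\|_{H^s}\,\|\langle\eta\rangle\widehat h\|_{L^1}\lesssim \|f\|_{H^s}\|h\|_{H^{r+1}},
$$
with $r+1=s$, i.e.\ $r=s-1>1/2$. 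So the first estimate with $g=\partial_xh$ already suffices, since $\|\partial_xh\|_{H^{s-1}}\le\|h\|_{H^s}$ and $s-1>1/2$.

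\textbf{Anticipated obstacle.} I expect no real obstacle. The only point needing care is checking that the support argument also covers the boundary cases $\xi=0$ or $\eta=0$, which have measure zero. The whole argument is an $L^2$–$L^1$ convolution bound, so $\sigma\ge0$ is needed only for the monotonicity $\langle\xi\rangle^\sigma\le\langle\xi-\eta\rangle^\sigma$. The Schwartz hypothesis justifies the Fourier manipulations, and density then extends the bound.
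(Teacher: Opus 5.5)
Your proof is correct. The second estimate is handled exactly as in the paper: take $g=\partial_x h$ and $r=s-1>1/2$, and use $\norm{\partial_x h}_{H^{s-1}}\le\norm h_{H^s}$. The detour through $|\eta|\le\langle\eta\rangle^{1/2}\langle\xi-\eta\rangle^{1/2}$ and the parameter $\theta$ is superfluous, since with $\theta=0$ it collapses to that same observation.

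The genuine difference is in the first estimate. The paper does not prove it. It writes $P_\pm=\frac12(I\pm i\Hh)$, so that $[P_\pm,f]=\pm\frac i2[\Hh,f]$, and then quotes Oliver's $H^\sigma$ bound for the Bedrosian commutator $[\Hh,f]g$. You instead prove it directly:
\begin{itemize}
\item the symbol $\mathbf 1_{\R_+}(\xi)-\mathbf 1_{\R_+}(\eta)$ is supported in $\{\xi\eta\le0\}$;
\item there $|\xi-\eta|=|\xi|+|\eta|$, so the output weight $\langle\xi\rangle^\sigma$ can be moved onto $\widehat f$ (this uses $\sigma\ge0$);
\item Young's inequality $L^2*L^1\subset L^2$ then finishes.
\end{itemize}
The identity $|\xi-\eta|=|\xi|+|\eta|$ holds for every pair with $\xi\eta\le0$, including $\xi=0$ or $\eta=0$, so the boundary cases you flagged need no separate treatment.

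What your route buys:
\begin{itemize}
\item it is self-contained;
\item it turns the heuristic in the remark following the lemma into the actual proof, and the same sign geometry is what drives the dyadic version, Lemma \ref{lem:envelope-hardy};
\item it gives a slightly stronger bound, with $\norm{\widehat g}_{L^1}$ in place of $\norm g_{H^r}$.
\end{itemize}
The paper's route is shorter, but it rests on an external reference.
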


\begin{proof}
Since the identity operator commutes with multiplication and
$$
P_{\pm}=\frac12(I\pm i\Hh),
$$
the first estimate follows direct by  \cite[Theorem 1]{Oliver2018}. For the second estimate, take $r=s-1>\frac12$ and $g=\partial_xh$. Then
$$
\norm{\partial_xh}_{H^{s-1}}
\leq
\norm h_{H^s},
$$
and the conclusion follows from the first estimate.

\end{proof}
\begin{remark}
The point of Lemma \ref{lem:hardycomm} is that the commutator does not lose a derivative. The symbol of
$[P_+,f]g_x$ is supported where the input frequency $\eta$ and the output frequency $\xi$ have opposite signs. If $\theta=\xi-\eta$ is the frequency of $f$, then on this region
$$
|\theta|=|\xi|+|\eta|.
$$
Hence
$$
|\theta|\ge |\eta|
\quad\text{and}\quad
|\theta|\ge |\xi|.
$$
Therefore the derivative in $g_x$, which gives the factor $|\eta|$, can be controlled by the frequency of $f$. In this way, the derivative is effectively placed on the coefficient, and no derivative is lost in the estimate.
\end{remark}


We also record the invertibility of the Cole--Hopf   transform. The proof is a direct application of the Lemma \ref{lem:exp}, but we include the short argument because the nonvanishing condition on $|1+U|$ is important in our construction.

\begin{lemma}\label{lem:inverse}
Let $s>1/2$. The map
$$
u\mapsto e^{P_+u}-1
$$
is locally Lipschitz from $H^s(\R\to\R)$ to $H^s(\R\to\C)$. Conversely, if $U\in H^s(\R\to\C)$ and
$$
|1+U(x)|\ge c>0,
$$
then
$$
\mathcal L(U):=2\ln|1+U|\in H^s(\R\to\R),
$$
and, on sets where $|1+U|\ge c$ and $\norm U_{H^s}\le M$,
$$
\norm{\mathcal L(U)}_{H^s}\le C_{s,c,M}\norm U_{H^s}.
$$
The map $U\mapsto\mathcal L(U)$ is locally Lipschitz on the same sets. If $U=e^{P_+u}-1$ with $u$ real-valued, then $\mathcal L(U)=u$ and
$$
|1+U|=e^{u/2}.
$$
\end{lemma}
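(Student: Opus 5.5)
The plan is to treat the two directions separately, reducing each to the composition estimates of Lemma \ref{lem:exp} and to the algebra property of $H^s$ for $s>1/2$.

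\emph{Forward direction.} Since $P_+$ is bounded on $H^s$, the map $u\mapsto P_+u$ is linear and bounded from $H^s(\R\to\R)$ to $H^s(\R\to\C)$. Estimates \eqref{eq:exp1} and \eqref{eq:exp3} hold verbatim for complex-valued $f,g$: the power-series argument uses only the algebra property, and the integral formula for $e^f-e^g$ is valid in $\C$. Applying \eqref{eq:exp3} with $f=P_+u$, $g=P_+v$ gives
$$
\norm{e^{P_+u}-e^{P_+v}}_{H^s}\le C_s e^{C_s(\norm u_{H^s}+\norm v_{H^s})}\norm{u-v}_{H^s},
$$
which is the local Lipschitz bound.

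\emph{Identity.} For real $u$ we have $P_+u=\tfrac12(u+i\Hh u)$, so $|1+U|=|e^{P_+u}|=e^{\Rea P_+u}=e^{u/2}$. Taking logarithms gives $\mathcal L(U)=u$.

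\emph{Converse direction.} Write
$$
2\ln|1+U|=\ln|1+U|^2=\ln(1+w),\qquad w:=2\Rea U+|U|^2=U+\overline U+U\overline U .
$$
By the algebra property, $\norm w_{H^s}\lesssim \norm U_{H^s}+\norm U_{H^s}^2\le C_M\norm U_{H^s}$. Moreover, $w$ is real-valued and satisfies $1+w\ge c^2$. By Sobolev embedding, $\norm w_{L^\infty}\le C_M$, so $w$ takes values in the compact interval $I=[c^2-1,C_M]$.

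Let $F(y)=\ln(1+y)$. This function is smooth on a neighborhood of $I$ and $F(0)=0$. One cannot use a power series here, since $\ln(1+y)$ converges only for $|y|<1$ and $w$ need not be small. Instead, modify $F$ outside a neighborhood of $I$ to obtain $\tilde F\in C^\infty(\R)$ with $\tilde F(0)=0$. Then apply the standard Moser composition estimate for real $s>1/2$,
$$
\norm{\tilde F(w)}_{H^s}\le C(\tilde F,\norm w_{L^\infty})\,\norm w_{H^s},
$$
and use $\tilde F(w)=F(w)$. For non-integer $s$ this estimate is proved by Littlewood--Paley/paraproduct arguments or by interpolation, and it is classical; see \cite{LinaresPonce2015}. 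This yields $\norm{\mathcal L(U)}_{H^s}\le C_{s,c,M}\norm U_{H^s}$.

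For the Lipschitz bound, write
$$
\tilde F(w_1)-\tilde F(w_2)=(w_1-w_2)\int_0^1\tilde F'\bigl(w_2+\theta(w_1-w_2)\bigr)\dd\theta
$$
and split $\tilde F'=\tilde F'(0)+G$ with $G(0)=0$. The term $\tilde F'(0)(w_1-w_2)$ is immediate. For the remaining term, apply the Moser estimate to $G$ along the segment and then the algebra property. This bounds the difference by $C\norm{w_1-w_2}_{H^s}$. Finally, $\norm{w_1-w_2}_{H^s}\lesssim(1+\norm{U_1}_{H^s}+\norm{U_2}_{H^s})\norm{U_1-U_2}_{H^s}$.

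\emph{Main obstacle.} The main difficulty is the converse direction. Because $w$ need not be small, the series method of Lemma \ref{lem:exp} does not apply, and one must invoke the general Moser composition lemma for smooth functions vanishing at $0$. The lower bound $|1+U|\ge c$ is exactly what guarantees that $\ln$ is only evaluated on a compact set where it is smooth.
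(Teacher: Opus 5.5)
Your proposal is correct and follows essentially the paper's route. The forward direction comes from the boundedness of $P_+$ together with Lemma~\ref{lem:exp}. The converse comes from the standard smooth composition theorem, with $|1+U|\ge c$ and $H^s\hookrightarrow L^\infty$ keeping the argument on a compact set away from the logarithmic singularity.

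The only difference is cosmetic. The paper composes the two-variable function $(\rho,\eta)\mapsto\ln\bigl((1+\rho)^2+\eta^2\bigr)$ with $U=\rho+i\eta$. You instead reduce to the real scalar $w=2\Rea U+|U|^2$ and apply the one-variable Moser estimate, and you write out the Lipschitz step that the paper leaves to the reference.
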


\begin{proof}
The forward statement follows from the boundedness of $P_+$ on $H^s$ and the Lemma \ref{lem:exp}. For the inverse, write $U=\rho+i\eta$. Then
$$
\mathcal L(U)
=\ln\bigl((1+\rho)^2+\eta^2\bigr).
$$
The assumption $|1+U|\ge c$ keeps $(\rho,\eta)$ a positive distance from the only singular point $(-1,0)$ of this smooth function. Since $H^s\hookrightarrow L^\infty$, the standard composition theorem and its local Lipschitz form give the stated estimates; see \cite{Taylor1991}. Finally, for real $u$,
$$
\Rea(P_+u)=\frac u2,
$$
which gives $|e^{P_+u}|=e^{u/2}$ and hence $\mathcal L(e^{P_+u}-1)=u$.
\end{proof}

We now prove the a priori estimate that drives the local theory. The key point is that every differentiated term in \eqref{eq:U} can be controlled at the same Sobolev level as $U$.

\begin{proposition}\label{prop:energy}
Let $s>3/2$, and let $u$ be a smooth real solution of \eqref{eq:embo} on an interval $I$. Assume that on $I$,
$$
\norm{U(t)}_{H^s}\le M, 
$$
and 
$$
|1+U(t,x)|\ge c>0.
$$
Then
\begin{equation}\label{eq: grownwall type estimate}
\frac{\dd}{\dd t}\norm U_{H^s}^2
\le
C_{s,\gamma,c,M}
\bigl(1+\norm U_{H^s}^2\bigr).
\end{equation}
\end{proposition}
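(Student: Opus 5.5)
We will run a standard $H^s$ energy estimate on the transformed equation \eqref{eq:U}, using the inverse map of Lemma \ref{lem:inverse} to express every coefficient in terms of $U$. Since $|1+U|\ge c$ and $\norm U_{H^s}\le M$, Lemma \ref{lem:inverse} gives $u=\mathcal L(U)$ with $\norm u_{H^s}\le C_{s,c,M}\norm U_{H^s}\le C_{s,c,M}M$. Lemma \ref{lem:exp} then gives $\norm a_{H^s}=\norm{e^u-1}_{H^s}\le C_{s,c,M}$. Apply $J^s$ to \eqref{eq:U}, pair with $J^sU$ in $L^2$ and take real parts. The dispersive term drops out, because $\Rea\ip{-iJ^sU_{xx}}{J^sU}=\Rea\bigl(i\norm{J^sU_x}_{L^2}^2\bigr)=0$. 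This leaves
$$
\frac12\frac{\dd}{\dd t}\norm U_{H^s}^2
=-\gamma\Rea\ip{J^s(aU_x)}{J^sU}-\gamma\Rea\ip{J^s((1+U)R)}{J^sU}.
$$

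For the transport term we split $J^s(aU_x)=aJ^sU_x+[J^s,a]U_x$. Because $a$ is real-valued, $\Rea\ip{aJ^sU_x}{J^sU}=\frac12\int a\,\partial_x|J^sU|^2=-\frac12\int a_x|J^sU|^2$. This is bounded by $\norm{a_x}_{L^\infty}\norm U_{H^s}^2$, and $\norm{a_x}_{L^\infty}\lesssim\norm a_{H^s}$ since $s>3/2$. For the commutator we use the Kato--Ponce estimate:
$$
\norm{[J^s,a]U_x}_{L^2}\lesssim\norm{a_x}_{L^\infty}\norm{U_x}_{H^{s-1}}+\norm a_{H^s}\norm{U_x}_{L^\infty}\lesssim\norm a_{H^s}\norm U_{H^s},
$$
where $\norm{U_x}_{L^\infty}\lesssim\norm U_{H^s}$ again because $s>3/2$. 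Together these bound the transport contribution by $C|\gamma|\norm a_{H^s}\norm U_{H^s}^2$.

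The main obstacle is the forcing term. Here the derivative $u_x$ must not cost regularity, and this is exactly what Lemma \ref{lem:hardycomm} provides:
$$
\norm R_{H^s}=\norm{[P_+,a]\partial_xu}_{H^s}\lesssim\norm a_{H^s}\norm u_{H^s}.
$$
By the algebra property of $H^s$, $\norm{(1+U)R}_{H^s}\le\norm R_{H^s}+C_s\norm U_{H^s}\norm R_{H^s}\lesssim(1+M)\norm a_{H^s}\norm u_{H^s}$. The constant function $1$ is never estimated in $H^s$; it only multiplies $R$. Using $\norm u_{H^s}\le C_{s,c,M}\norm U_{H^s}$, the forcing term is bounded by $C_{s,\gamma,c,M}\norm U_{H^s}^2$.

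Collecting the bounds and using $\norm a_{H^s}\le C_{s,c,M}$ gives $\frac{\dd}{\dd t}\norm U_{H^s}^2\le C_{s,\gamma,c,M}\norm U_{H^s}^2$. This implies \eqref{eq: grownwall type estimate}. Smoothness of $u$ justifies all the pairings. If needed, one can first apply $P_{\le N}$ and then let $N\to\infty$, which avoids pairing $J^{s}U_{xx}$ directly.
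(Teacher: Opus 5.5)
Your proposal is correct and follows the paper's proof essentially step for step. The paper also uses the $J^s$ energy identity for \eqref{eq:U}, the same splitting $J^s(aU_x)=a\partial_xJ^sU+[J^s,a]U_x$ handled by integration by parts and Kato--Ponce, and Lemma \ref{lem:hardycomm} for $R$, with $\norm{u}_{H^s}$ and $\norm{a}_{H^s}$ controlled through Lemmas \ref{lem:inverse} and \ref{lem:exp}. The only cosmetic difference is that you use the linear bound $\norm{u}_{H^s}\le C_{s,c,M}\norm{U}_{H^s}$ to get the slightly stronger homogeneous inequality $\frac{\dd}{\dd t}\norm{U}_{H^s}^2\le C\norm{U}_{H^s}^2$, whereas the paper bounds $\norm{R}_{H^s}$ by a constant and closes with Cauchy--Schwarz.
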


\begin{proof}
Apply $J^s$ to \eqref{eq:U}, take the $L^2$ inner product with $J^sU$, and take real parts. Since $J^s$ commutes with $\partial_x$,
$$
\Rea\ip{i J^sU_{xx}}{J^sU}=0.
$$
For the transport term, write
$$
J^s(aU_x)=a\partial_xJ^sU+[J^s,a]U_x.
$$
For the first term, since $a$ is real-valued,
\begin{align*}
\Rea\ip{a\partial_xJ^sU}{J^sU}
&=\frac12\int_\R a\,\partial_x|J^sU|^2\dd x=-\frac12\int_\R a_x|J^sU|^2\dd x,
\end{align*}
so
$$
\left|
\Rea\ip{a\partial_xJ^sU}{J^sU}
\right|
\le
\frac12\norm{a_x}_{L^\infty}\norm U_{H^s}^2.
$$
For the second term, the Kato--Ponce inequality gives
$$
\norm{[J^s,a]U_x}_{L^2}
\lesssim
\norm{a_x}_{L^\infty}\norm U_{H^s}
+
\norm a_{H^s}\norm{U_x}_{L^\infty}.
$$

It remains to estimate the right-hand side. Lemma \ref{lem:hardycomm}, applied with $f=a$ and $g=u_x$, yields
\begin{equation}\label{eq:Rbound}
\norm{[P_+,a]u_x}_{H^s}
\lesssim
\norm a_{H^s}\norm u_{H^s}.
\end{equation}
Since $s>3/2$
$$
\norm{(1+U)[P_+,a]u_x}_{H^s}
\lesssim
(1+\norm U_{H^s})\norm{[P_+,a]u_x}_{H^s}.
$$

By Lemma \ref{lem:inverse}, the assumptions $|1+U|\ge c$ and $\norm U_{H^s}\le M$ imply
$$
\norm u_{H^s}\le C_{s,c,M}.
$$
Since $s>3/2$, we have $H^s\hookrightarrow W^{1,\infty},$ and the Lemma \ref{lem:exp} gives
$$
\norm a_{H^s}+\norm{a_x}_{L^\infty}
\le C_{s,c,M}.
$$
Hence \eqref{eq:Rbound} also gives
$$
\norm{[P_+,a]u_x}_{H^s}\le C_{s,c,M}.
$$
Combining these bounds with Cauchy--Schwarz inequality proves \eqref{eq: grownwall type estimate}.
\end{proof}

The same argument at the $H^\sigma$, with $\sigma \geq s$, level gives the higher-regularity estimate.

\begin{corollary}\label{prop:higher-energy}
Let $\sigma\ge s>3/2$ and let $u$ be a smooth solution of \eqref{eq:embo} on $I$. Assume
$$
\sup_{t\in I}\norm{u(t)}_{H^s}\le M.
$$
Then, on every subinterval on which $|1+U|$ is bounded from below by a positive constant,
$$
\frac{\dd}{\dd t}\norm U_{H^\sigma}^2
\le
C_{s,\sigma,\gamma,M}\norm U_{H^\sigma}^2.
$$
\end{corollary}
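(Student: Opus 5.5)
The plan is to repeat the energy argument of Proposition \ref{prop:energy} at the $H^\sigma$ level, using tame estimates so that every bound is linear in the top-order norm $\norm U_{H^\sigma}$ (or $\norm u_{H^\sigma}$), with coefficients depending only on $M$, $c$, and lower-order norms. We apply $J^\sigma$ to \eqref{eq:U}, pair with $J^\sigma U$, and take real parts. The dispersive term vanishes, as before. For the transport term we write $J^\sigma(aU_x)=a\partial_xJ^\sigma U+[J^\sigma,a]U_x$. The first piece is bounded by $\tfrac12\norm{a_x}_{L^\infty}\norm U_{H^\sigma}^2$ after integration by parts. For the second, Kato--Ponce gives
$$
\norm{[J^\sigma,a]U_x}_{L^2}\lesssim \norm{a_x}_{L^\infty}\norm U_{H^\sigma}+\norm a_{H^\sigma}\norm{U_x}_{L^\infty}.
$$
Here $\norm{a_x}_{L^\infty}+\norm{U_x}_{L^\infty}\le C(M)$, because $H^s\hookrightarrow W^{1,\infty}$ and $\norm U_{H^s}\lesssim_M 1$ by the forward part of Lemma \ref{lem:inverse}. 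Moreover, the tame bound in Lemma \ref{lem:exp} gives $\norm a_{H^\sigma}\le C_{s,\sigma}(M)\norm u_{H^\sigma}$.

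For the forcing term we use Lemma \ref{lem:hardycomm} with $\sigma$ in place of the Sobolev index and $r=s-1>1/2$:
$$
\norm{[P_+,a]u_x}_{H^\sigma}\lesssim \norm a_{H^\sigma}\norm{u_x}_{H^{s-1}}\le C(M)\norm u_{H^\sigma}.
$$
This is where the tameness comes from: the commutator puts all $\sigma$ derivatives on the coefficient, while $u_x$ is only needed at level $s-1$. The product with $1+U$ is then handled by the tame product estimate \eqref{eq:tame-product-exp}, writing $(1+U)R=R+UR$:
$$
\norm{(1+U)R}_{H^\sigma}\lesssim (1+\norm U_{L^\infty})\norm R_{H^\sigma}+\norm U_{H^\sigma}\norm R_{L^\infty}.
$$
The factor $\norm R_{L^\infty}$ is bounded by $\norm R_{H^s}\le C(M)$, using \eqref{eq:Rbound}.

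Everything is thus controlled by $C(M)(\norm U_{H^\sigma}+\norm u_{H^\sigma})\norm U_{H^\sigma}$, and it remains to convert $\norm u_{H^\sigma}$ into $\norm U_{H^\sigma}$. This is the main obstacle: we need a tame version of the inverse map in Lemma \ref{lem:inverse}, namely
$$
\norm{\mathcal L(U)}_{H^\sigma}\le C(c,\norm U_{H^s})\norm U_{H^\sigma},
$$
with a constant not depending on the $H^\sigma$ size. I would obtain it from the tame Moser composition estimate for the smooth function $F(\rho,\eta)=\ln((1+\rho)^2+\eta^2)$, which vanishes at the origin. Since $|1+U|\ge c$, only the values of $F$ on the region $\{|1+U|\ge c\}\cap\{|U|\le C(M)\}$ matter, and there we may modify $F$ into a globally smooth function. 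The standard bound $\norm{F(U)}_{H^\sigma}\le C(\norm U_{L^\infty})\norm U_{H^\sigma}$ then applies.

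As an alternative route, $u=2\Rea Q$ and $Q=\log(1+U)$ is locally smooth in $U$, so the same Moser argument applies to $Q$. This gives $\norm u_{H^\sigma}\le C(M,c)\norm U_{H^\sigma}$, and the constant $c$ is absorbed into the dependence allowed on the subinterval. Collecting all the bounds yields $\frac{\dd}{\dd t}\norm U_{H^\sigma}^2\le C_{s,\sigma,\gamma,M}\norm U_{H^\sigma}^2$.
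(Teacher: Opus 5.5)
Your proposal is correct and follows the paper's route. The paper's entire proof is the remark that the energy argument of Proposition~\ref{prop:energy} goes through at the $H^\sigma$ level, and your tame bounds supply exactly the details it leaves implicit: Kato--Ponce, Lemma~\ref{lem:hardycomm} at level $\sigma$ with $r=s-1$, and a Moser-type tame inverse $\norm{u}_{H^\sigma}\le C\norm{U}_{H^\sigma}$ for $u=2\ln|1+U|$.

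There are two minor points to fix. First, the $c$-dependence of your constant is not removed by the subinterval. It is removed by noting that for real $u$ one has $|1+U|=e^{u/2}\ge e^{-C_sM}$, so $c$ depends only on $M$. Second, drop the alternative route via $Q=\log(1+U)$. Once $\tfrac12\Hh u$ leaves $(-\pi,\pi)$, no single branch of the logarithm writes $Q$ as a fixed smooth function of $U$. The real part $\ln|1+U|$ used in your main route is single-valued, so that route is unaffected.
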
\qed

\section{Existence and uniqueness of solutions}
\label{sec:existence}

We construct solutions for \eqref{eq:embo} by \emph{parabolic regularization} argument. This is the classical approach used, for example, in the Benjamin--Ono well-posedness theory for smooth solutions; see, for instance, \cite{Iorio1986,Iorio1991}.

\subsection*{Step 1: The regularized problem}

Let $\rho\in C_c^\infty(\R)$ be a standard real mollifier and set $\rho_\eps(x)=\eps^{-1}\rho(x/\eps)$ and
$$
u_0^\eps=\rho_\eps*u_0.
$$
Then $u_0^\eps\to u_0$ in $H^s$ and
$$
\sup_{0<\eps\le1}\norm{u_0^\eps}_{H^s}
\lesssim\norm{u_0}_{H^s}.
$$
For $\eps>0$, consider the following regularized Cauchy problem
\begin{equation}\label{eq:parabolic-reg}
\left\{
\begin{aligned}
&u_t^\eps+\Hh u_{xx}^\eps+u_x^\eps\Hh u_x^\eps
+\gamma(e^{u^\eps}-1)u_x^\eps
=\eps u_{xx}^\eps,\\
&u^\eps(0,x)=u_0^\eps.
\end{aligned}
\right.
\end{equation}
For each fixed $\eps>0$, a classical iteration argument applied to the equation \eqref{eq:parabolic-reg} yields a unique smooth solution on a nontrivial time interval. Set
$$
Q^\eps=P_+u^\eps,
\quad
U^\eps=e^{Q^\eps}-1,
\quad
a^\eps=e^{u^\eps}-1,
\quad \text{and} \quad
R^\eps=[P_+,a^\eps]u_x^\eps.
$$
Applying $P_+$ to \eqref{eq:parabolic-reg}, using Proposition \ref{prop:null}, and repeating the calculation leading to \eqref{eq:U}, we obtain
\begin{equation}\label{eq:Upar}
U_t^\eps-(i+\eps)U_{xx}^\eps
+\gamma a^\eps U_x^\eps
=
-\gamma(1+U^\eps)R^\eps
-\eps Q_x^\eps U_x^\eps.
\end{equation}

\subsection*{Step 2: Uniform estimates}

\begin{proposition}\label{prop:viscous}
Let $s>3/2$ and $u_0\in H^s$. There exist $T>0$, $C>0$, and $c_0>0$, depending only on $s$, $|\gamma|$, and an upper bound for $\norm{u_0}_{H^s}$, such that every sufficiently small $\eps>0$ solution of \eqref{eq:parabolic-reg} exists on $[0,T]$ and satisfies
\begin{equation}\label{eq:uniform-parabolic-bound}
\sup_{0\le t\le T}\norm{U^\eps(t)}_{H^s}^2
+
\eps\int_0^T\norm{U_x^\eps(t)}_{H^s}^2\dd t
\le C,
\end{equation}
and
$$
|1+U^\eps(t,x)|\ge c_0
$$
for all $(t,x)\in[0,T]\times\R$.
\end{proposition}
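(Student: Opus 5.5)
We argue by continuity on the transformed equation \eqref{eq:Upar}, treating $\eps\in(0,1]$ uniformly. Fix $R\ge\norm{u_0}_{H^s}$. Since $\sup_\eps\norm{u_0^\eps}_{H^s}\lesssim R$, Lemma \ref{lem:inverse} gives $\norm{U^\eps(0)}_{H^s}\le M_0=M_0(s,R)$. Moreover $|1+U^\eps(0)|=e^{u_0^\eps/2}\ge e^{-C\norm{u_0^\eps}_{H^s}/2}=:2c_0$ by Sobolev embedding. Set $M:=2M_0+1$. Let $T_\eps^*$ be the supremum of times $t$ up to which the smooth solution exists and satisfies
\[
\norm{U^\eps}_{H^s}\le M \quad\text{and}\quad |1+U^\eps|\ge c_0 .
\]
Then $T^*_\eps>0$ by continuity. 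On $[0,T^*_\eps)$, Lemma \ref{lem:inverse} gives $\norm{u^\eps}_{H^s}\le C_{s,c_0,M}$. By the local theory for \eqref{eq:parabolic-reg}, whose existence time depends on $\norm{u^\eps}_{H^s}$, the solution cannot break down before $T^*_\eps$. It therefore suffices to show that $T^*_\eps\ge T$ for some $T$ independent of $\eps$, with the bounds improved strictly.

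\textbf{Energy estimate.} Apply $J^s$ to \eqref{eq:Upar}, pair with $J^sU^\eps$, and take real parts, exactly as in Proposition \ref{prop:energy}. The term $-(i+\eps)U^\eps_{xx}$ contributes $\eps\norm{U^\eps_x}_{H^s}^2$ with a favourable sign on the left. The transport term and the commutator term $-\gamma(1+U^\eps)R^\eps$ are handled verbatim by Proposition \ref{prop:energy}, using the Kato--Ponce inequality, Lemma \ref{lem:hardycomm} and Lemma \ref{lem:exp}. The only new term is $-\eps Q^\eps_xU^\eps_x$, which contains $s+1$ derivatives and is the main point to check. We bound it by duality and the product estimate ($s>1/2$):
\[
\eps\,|\langle J^s(Q^\eps_xU^\eps_x),J^sU^\eps\rangle|\le \eps\norm{Q^\eps_xU^\eps_x}_{H^{s-1}}\norm{U^\eps}_{H^{s+1}}\lesssim \eps\norm{Q^\eps}_{H^s}\norm{U^\eps_x}_{H^{s}}\bigl(\norm{U^\eps}_{H^s}+\norm{U^\eps_x}_{H^s}\bigr).
\]
Here $H^{s-1}$ is an algebra because $s-1>1/2$. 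This is bounded by $\frac{\eps}{2}\norm{U^\eps_x}_{H^s}^2+\eps C_{M}(1+\norm{U^\eps}_{H^s}^2)$. We have used that $\norm{Q^\eps}_{H^s}\le\norm{u^\eps}_{H^s}\le C_{s,c_0,M}$ and that the case $\norm{Q^\eps}_{H^s}$ large is excluded a priori. If the constant in front of $\norm{U_x^\eps}^2_{H^s}$ is not small, we argue differently: write $Q^\eps_xU^\eps_x=(1+U^\eps)(Q^\eps_x)^2$ and $Q_x^\eps=U_x^\eps/(1+U^\eps)$. Alternatively, we first integrate by parts one derivative onto $J^sU^\eps$ and use $|1+U^\eps|\ge c_0$, accepting a constant $C_{c_0,M}$. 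We expect this absorption to be the main obstacle. Once it is achieved, we obtain
\[
\frac{\dd}{\dd t}\norm{U^\eps}_{H^s}^2+\eps\norm{U^\eps_x}_{H^s}^2\le C_{s,\gamma,c_0,M}\bigl(1+\norm{U^\eps}_{H^s}^2\bigr),
\]
uniformly in small $\eps$.

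\textbf{Closing the bootstrap.} Gronwall's inequality gives $\norm{U^\eps(t)}_{H^s}^2\le(M_0^2+Ct)e^{Ct}<M^2/2$ for $t\le T_1$, with $T_1$ depending only on $s,|\gamma|,R$. Integrating the dissipative term in time then gives \eqref{eq:uniform-parabolic-bound}. For the lower bound, note that $|1+U^\eps|=e^{u^\eps/2}$ holds also for the regularized problem, since $u^\eps$ is real. It therefore suffices to control $\norm{u^\eps(t)-u^\eps_0}_{L^\infty}$. Using \eqref{eq:parabolic-reg} directly, we have $\norm{u^\eps_t}_{H^{s-2}}\le C_{M}$, since every term lies in $H^{s-2}$ with bound $C_M$ (here $\eps\le1$). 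Interpolation between $H^{s-2}$ and $H^s$ then gives $\norm{u^\eps(t)-u^\eps_0}_{H^{1/2+\delta}}\le C_Mt^{\theta}$, where $\theta=(s-1/2-\delta)/2$. Hence
\[
|1+U^\eps(t)|\ge 2c_0e^{-C_Mt^\theta}>c_0 \qquad\text{for } t\le T_2 .
\]
Taking $T=\min(T_1,T_2)$, both bootstrap bounds hold strictly on $[0,\min(T,T^*_\eps))$. Therefore $T^*_\eps>T$, which proves the proposition.
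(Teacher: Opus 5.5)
Your overall scheme is the paper's: a bootstrap on \eqref{eq:Upar}, the energy estimate, Gronwall, a time-derivative bound plus interpolation for the lower bound, and continuation. Two of your variants are fine. Continuing via the fixed-$\eps$ parabolic $H^s$ theory works in place of an $H^m$ bound. So does deriving the lower bound from $|1+U^\eps|=e^{u^\eps/2}$ and \eqref{eq:parabolic-reg} rather than from $U^\eps_t$.

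The gap is at the one new term, $-\eps Q^\eps_xU^\eps_x$, which you flag but do not resolve. Your displayed chain bounds $\|Q^\eps_xU^\eps_x\|_{H^{s-1}}$ by $\|Q^\eps\|_{H^s}\|U^\eps_x\|_{H^s}$. This produces a term $\eps\|Q^\eps\|_{H^s}\|U^\eps_x\|_{H^s}^2$, which the dissipation $\eps\|U^\eps_x\|_{H^s}^2$ absorbs only if $C\|Q^\eps\|_{H^s}\le 1/2$. Nothing gives that. On the bootstrap interval $\|Q^\eps\|_{H^s}\le C_{s,c_0,M}$ is bounded but not small, because $R\ge\|u_0\|_{H^s}$ is arbitrary. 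Consequently:
\begin{itemize}
\item the claim that the term is at most $\frac{\eps}{2}\|U^\eps_x\|_{H^s}^2+\eps C_M(1+\|U^\eps\|_{H^s}^2)$ does not follow;
\item the remark that large $\|Q^\eps\|_{H^s}$ is ``excluded a priori'' is false;
\item the alternatives you list are not carried out.
\end{itemize}
As written, the uniform differential inequality is not established.

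The repair is short: the top-order norm $\|U^\eps_x\|_{H^s}$ must enter only linearly. Put both factors in $H^{s-1}$, which is an algebra since $s-1>1/2$:
\[
\|Q^\eps_xU^\eps_x\|_{H^{s-1}}\lesssim\|Q^\eps_x\|_{H^{s-1}}\|U^\eps_x\|_{H^{s-1}}\le\|u^\eps\|_{H^s}\|U^\eps\|_{H^s}\le C_{s,c_0,M}.
\]
With $\|U^\eps\|_{H^{s+1}}\le\|U^\eps\|_{H^s}+\|U^\eps_x\|_{H^s}$, your duality pairing is then at most $\eps C(M+\|U^\eps_x\|_{H^s})$. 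Young's inequality bounds this by $\frac{\eps}{2}\|U^\eps_x\|_{H^s}^2+C\eps$.

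The paper does the equivalent thing at the $H^s$ level. It writes $Q^\eps_xU^\eps_x=(U^\eps_x)^2/(1+U^\eps)$ and uses the tame product estimate with $\|U^\eps_x\|_{L^\infty}\lesssim M$, giving a bound linear in $\|U^\eps_x\|_{H^s}$. It then pairs with $J^sU^\eps$ in $L^2$.

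A minor point: your interpolation exponent $(s-1/2-\delta)/2$ is valid only when $1/2+\delta\ge s-2$. For $s>5/2$, interpolate to an index in $(\max\{1/2,s-2\},s)$ as the paper does, or simply use $H^{s-2}\hookrightarrow L^\infty$.
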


\begin{proof}
Since $u_0^\eps$ is real-valued,
$$
|1+U_0^\eps(x)|
=
|e^{P_+u_0^\eps(x)}|
=
e^{u_0^\eps(x)/2}.
$$
By Sobolev embedding and the uniform $H^s$ bound for $u_0^\eps$,
$$
\inf_{0<\eps\le1}\inf_x|1+U_0^\eps(x)|
\ge c_*>0.
$$
The composition estimate also gives
$$
\sup_{0<\eps\le1}\norm{U_0^\eps}_{H^s}<\infty.
$$
Choose $M>0$ such that
$$
\norm{U_0^\eps}_{H^s}\le \frac M2
$$
for every $0<\eps\le1$. By continuity, there is a maximal bootstrap interval on which
$$
\norm{U^\eps}_{H^s}\le M,
$$
and
$$
|1+U^\eps|\ge \frac{c_*}{2}.
$$
We show that both bounds improve on a time interval independent of $\eps$. Applying $J^s$ to \eqref{eq:Upar}, the Schr\"odinger term has zero real part, while the viscous term gives
$$
-\eps\Rea\ip{J^sU_{xx}^\eps}{J^sU^\eps}
=
\eps\norm{U_x^\eps}_{H^s}^2.
$$
The transport and commutator terms are estimated as in Proposition \ref{prop:energy}. On the bootstrap interval, they are bounded by
$$
C_{s,\gamma,c_*,M} \cdot
\bigl(1+\norm{U^\eps}_{H^s}^2\bigr).
$$

It remains to estimate the last term in \eqref{eq:Upar}. Since
$$
Q_x^\eps
=
\frac{U_x^\eps}{1+U^\eps},
$$
we have
$$
Q_x^\eps U_x^\eps
=
\frac{(U_x^\eps)^2}{1+U^\eps}.
$$
By the product and composition estimates, together with the bootstrap bounds,
$$
\norm{Q_x^\eps U_x^\eps}_{H^s}
\lesssim
\norm{U_x^\eps}_{L^\infty}
\norm{U_x^\eps}_{H^s}.
$$
Since $s>3/2$,
$$
\norm{U_x^\eps}_{L^\infty}
\lesssim
\norm{U^\eps}_{H^s}
\le M,
$$
and therefore
$$
\norm{Q_x^\eps U_x^\eps}_{H^s}
\lesssim
\norm{U_x^\eps}_{H^s}.
$$
Hence
$$
\begin{aligned}
\eps\left|
\ip{J^s(Q_x^\eps U_x^\eps)}{J^sU^\eps}
\right|
&\lesssim
\eps\norm{U_x^\eps}_{H^s}\norm{U^\eps}_{H^s} \le
\frac{\eps}{2}\norm{U_x^\eps}_{H^s}^2
+
C_{s,c_*,M}\eps\norm{U^\eps}_{H^s}^2.
\end{aligned}
$$

Combining the estimates,
$$
\frac{\dd}{\dd t}\norm{U^\eps}_{H^s}^2
+
\eps\norm{U_x^\eps}_{H^s}^2
\le
C_{s,\gamma,c_*,M}
\bigl(1+\norm{U^\eps}_{H^s}^2\bigr).
$$
By Gronwall's inequality, there exists $T_1>0$, independent of $\eps$, such that
$$
\norm{U^\eps(t)}_{H^s}
\le
\frac{3M}{4},
$$
for $0\le t\le T_1$, and
$$
\eps\int_0^{T_1}
\norm{U_x^\eps(t)}_{H^s}^2\dd t
\le C.
$$

It remains to improve the lower bound for $|1+U^\eps|$. From \eqref{eq:Upar} and the bootstrap bounds,
$$
\sup_{0<\eps\le1}
\norm{U_t^\eps}_{L^\infty(0,T_1;H^{s-2})}
\le C.
$$
Indeed, the linear terms are uniformly bounded in $H^{s-2}$, while the remaining terms are bounded in $H^{s-1}$, using $s-1>1/2$ and the lower bound for $|1+U^\eps|$. Hence
$$
\norm{U^\eps(t)-U_0^\eps}_{H^{s-2}}
\le Ct,
$$
while
$$
\norm{U^\eps(t)-U_0^\eps}_{H^s}
\le C.
$$
Choose
$$
\theta\in
\left(
\max\left\{\frac12,s-2\right\},
s
\right).
$$
Interpolation gives
$$
\norm{U^\eps(t)-U_0^\eps}_{H^\theta}
\lesssim
t^{(s-\theta)/2}.
$$
Since $\theta>1/2$, Sobolev embedding yields
$$
\norm{U^\eps(t)-U_0^\eps}_{L^\infty}
\lesssim
t^{(s-\theta)/2}.
$$
After decreasing $T\le T_1$ if necessary,
$$
\norm{U^\eps(t)-U_0^\eps}_{L^\infty}
\le
\frac{c_*}{4},
$$
and therefore
$$
|1+U^\eps(t,x)|
\ge
\frac{3c_*}{4}
>
\frac{c_*}{2}.
$$

Finally, fix an integer $m>s+2$. The same energy estimate at the $H^m$ level gives
$$
\frac{\dd}{\dd t}\norm{U^\eps}_{H^m}^2
+
\eps\norm{U_x^\eps}_{H^m}^2
\le
C_{s,m,\gamma,c_*,M}
\bigl(1+\norm{U^\eps}_{H^m}^2\bigr).
$$
Thus $\norm{U^\eps}_{H^m}$ stays finite on the bootstrap interval. The standard continuation criterion for the regularized parabolic problem then extends the smooth solution to $[0,T]$. Although the $H^m$ bound may depend on $\eps$, the time $T$ and the $H^s$ bounds are uniform. Therefore the bootstrap closes, and \eqref{eq:uniform-parabolic-bound} follows.
\end{proof}

\begin{remark}

If $u_0\in H^\sigma$ for some $\sigma>s$, the same parabolic estimate can be performed at the $H^\sigma$ level on the already fixed interval $[0,T]$. We spell out the only additional point. Set
$$
B^\eps=(1+U^\eps)^{-1}.
$$
The uniform $H^s$ bound and the lower bound for $|1+U^\eps|$ imply
$$
\norm{B^\eps-1}_{H^\sigma}
\le C_M\norm{U^\eps}_{H^\sigma}.
$$
Since
$$
Q_x^\eps U_x^\eps
=B^\eps(U_x^\eps)^2,
$$
the product estimate gives
$$
\norm{Q_x^\eps U_x^\eps}_{H^\sigma}
\le
C_M\norm{U_x^\eps}_{H^\sigma}
+C_M\norm{U^\eps}_{H^\sigma}.
$$
Here the $L^\infty$ norms of $U_x^\eps$ are controlled by the base $H^s$ bound. Consequently,
\begin{align*}
\eps\left|
\ip{J^\sigma(Q_x^\eps U_x^\eps)}{J^\sigma U^\eps}
\right|
&\le
\frac\eps2\norm{U_x^\eps}_{H^\sigma}^2
+C_M\norm{U^\eps}_{H^\sigma}^2.
\end{align*}
All remaining terms are controlled by the higher-regularity estimate of Proposition \ref{prop:higher-energy}. Thus Gronwall inequality yields
$$
\sup_{0\le t\le T}\norm{U^\eps(t)}_{H^\sigma}
\le C_{M,\sigma}\norm{U_0^\eps}_{H^\sigma}.
$$
Hence smooth initial data produce smooth inviscid solutions on the same lifespan determined by the base $H^s$ norm.
\end{remark}

\subsection*{Step 3: Compactness}

By Proposition \ref{prop:viscous} and Lemma \ref{lem:inverse},
$$
\sup_{0<\eps\le1}\norm{u^\eps}_{L^\infty(0,T;H^s)}<\infty.
$$
The regularized equation also gives
$$
\sup_{0<\eps\le1}\norm{u_t^\eps}_{L^\infty(0,T;H^{s-2})}<\infty,
$$
since the nonlinear terms are uniformly bounded in $H^{s-1}$. Hence, by Banach--Alaoglu, after passing to a subsequence,
$$
u^\eps\overset{*}{\rightharpoonup}u
\quad\text{in }L^\infty(0,T;H^s),
$$
and
$$
u_t^\eps\overset{*}{\rightharpoonup}u_t
\quad\text{in }L^\infty(0,T;H^{s-2}).
$$
By the standard local compactness argument, using a cutoff and Simon's criterion \cite[Corollary~4]{Simon1987}, for every bounded interval $K\subset\R$ and every $\delta>0$,
\begin{equation}\label{eq:localstrong}
u^\eps\to u
\quad\text{in }C([0,T];H^{s-\delta}(K)).
\end{equation}
In particular, $u(0)=u_0$ locally, since $u_0^\eps\to u_0$ in $H^s$.

Choose $\delta>0$ such that $s-\delta>1$. Then
$$
u_x^\eps\to u_x
\quad\text{in }L^2_{\mathrm{loc}}([0,T]\times\R),
$$
while
$$
\Hh u_x^\eps\rightharpoonup\Hh u_x
\quad\text{in }L^2((0,T)\times\R).
$$
Therefore, by the usual strong--weak convergence argument,
$$
u_x^\eps\Hh u_x^\eps
\to
u_x\Hh u_x
$$
in distributions. Since \eqref{eq:localstrong} also gives local uniform convergence of $u^\eps$ to $u$, we similarly obtain
$$
(e^{u^\eps}-1)u_x^\eps
\to
(e^u-1)u_x
$$
in distributions.

It remains to remove the artificial viscosity. From the uniform dissipative estimate and
$$
u_x^\eps
=
2\Rea\left(\frac{U_x^\eps}{1+U^\eps}\right),
$$
we obtain
$$
\eps\int_0^T\norm{u^\eps(t)}_{H^{s+1}}^2\dd t\lesssim1.
$$
Consequently,
$$
\norm{\eps u_{xx}^\eps}_{L^2(0,T;H^{s-1})}
\lesssim \sqrt{\eps}\to0.
$$
Passing to the limit in \eqref{eq:parabolic-reg}, we obtain a distributional solution of \eqref{eq:embo} such that
$$
u\in L^\infty(0,T;H^s),
$$
and
$$
u_t\in L^\infty(0,T;H^{s-2}).
$$

Finally, since $s-1>1/2$,
$$
F(u):=u_x\Hh u_x+\gamma(e^u-1)u_x
\in L^1([0,T];L^2).
$$
The standard linear theory then gives
$$
u(t)
=
S(t)u_0
-
\int_0^tS(t-t')F(u(t'))\dd t' \in L^2.
$$
Hence $u\in C([0,T];L^2),$ with $u(0)=u_0$ and, by local Lipschitz continuity of the exponential map on bounded $H^s$ sets,
$$
U=e^{P_+u}-1\in C([0,T];L^2).
$$
For later use we record that the transformed equation is valid at the regularity of the limiting solution, rather than only for smooth approximations.

\begin{lemma}\label{lem:rough-transform}
Let $s>3/2$ and let
$$
u\in L^\infty(I;H^s)\cap W^{1,\infty}(I;H^{s-2})
$$
be a solution of \eqref{eq:embo}. Then
$$
U\in L^\infty(I;H^s)\cap W^{1,\infty}(I;H^{s-2})
$$
and
\begin{equation}\label{eq:U-rough}
U_t-i U_{xx}+\gamma aU_x
=-\gamma(1+U)R
\end{equation}
in $\mathcal D'(I\times\R)$. In addition, if  $u\in C(I;L^2)$, then $U\in C(I;L^2)$.
\end{lemma}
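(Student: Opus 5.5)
We plan to obtain the regularity of $U$ and the rough transformed equation by justifying the chain rule for $U=e^{Q}-1$, $Q=P_+u$. The route is mollification in $x$: we pass to the limit in the smooth identity rather than differentiate $e^Q$ directly in a low-regularity space.

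\emph{Regularity of $U$.} Since $P_+$ is bounded on every $H^\sigma$, we have
$$
Q\in L^\infty(I;H^s)\cap W^{1,\infty}(I;H^{s-2}).
$$
Lemma \ref{lem:exp} then gives $U\in L^\infty(I;H^s)$. For the time derivative we use the identity $U_t=(1+U)Q_t$. Its validity is justified by mollification, addressed below. Granting it, we estimate
$$
\|U_t\|_{H^{s-2}}\lesssim (1+\|U\|_{H^s})\|Q_t\|_{H^{s-2}}.
$$
This product bound holds because multiplication by an $H^s$ function is bounded on $H^{s-2}$ when $s>3/2$, since $|s-2|<s$ and $s>1/2$. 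The continuity statement follows from the local Lipschitz property of $u\mapsto e^{P_+u}-1$ from $L^2\cap H^s$-bounded sets into $L^2$. The estimate is
$$
\|e^{f}-e^{g}\|_{L^2}\le \|f-g\|_{L^2}\,e^{\|f\|_{L^\infty}+\|g\|_{L^\infty}},
$$
which applies here because the $L^\infty$ norms are uniformly bounded by the $H^s$ bound.

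\emph{Equation.} Let $u^\delta=\rho_\delta*u$. Then $u^\delta$ is smooth in $x$ and Lipschitz in $t$ with values in every $H^m$. It satisfies
$$
u^\delta_t+\Hh u^\delta_{xx}+\rho_\delta*F(u)=0,\qquad F(u)=u_x\Hh u_x+\gamma(e^u-1)u_x.
$$
For $u^\delta$ the computation of Section \ref{sec:structure} is legitimate pointwise: $Q^\delta=P_+u^\delta$ is $W^{1,\infty}_t$ with values in $H^m$, so the chain rule holds. We obtain
$$
U^\delta_t-iU^\delta_{xx}=(1+U^\delta)\bigl(iQ^\delta_{xx}+i(Q^\delta_x)^2-P_+\rho_\delta*F(u)\bigr)+iU^\delta_{xx}.
$$
We then rewrite this using $U^\delta_{xx}=(1+U^\delta)(Q^\delta_{xx}+(Q^\delta_x)^2)$. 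After this rewriting the bracket reduces to $-(1+U^\delta)\bigl(P_+\rho_\delta*F(u)-iP_+[(u^\delta_x)^2\text{-structure}]\bigr)$. Concretely, we keep the identity in the form
$$
U^\delta_t-iU^\delta_{xx}=-(1+U^\delta)\bigl(P_+(\rho_\delta*F(u))+i(Q_x^\delta)^2\bigr).
$$

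\emph{Passing to the limit.} We let $\delta\to0$ in $\mathcal D'$. Since $Q^\delta\to Q$ in $L^\infty(I;H^{s'})$ for all $s'<s$, locally uniformly in $t$ on compacta, we have $U^\delta\to U$ in the same sense, with uniform $L^\infty$ bounds. Consequently $U^\delta_t\to U_t$ and $U^\delta_{xx}\to U_{xx}$ in distributions. Further, $F(u)\in L^\infty(I;H^{s-1})$ because $s-1>1/2$, so $P_+(\rho_\delta*F(u))\to P_+F(u)$ in $L^\infty(I;H^{s-1})$. Similarly, $(Q^\delta_x)^2\to Q_x^2$ in $L^\infty(I;H^{s-1})$. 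Multiplication by $1+U^\delta$, which converges in $L^\infty$, preserves convergence in $L^1_{loc}$.

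The limit identity is therefore
$$
U_t-iU_{xx}=-(1+U)\bigl(P_+F(u)+iQ_x^2\bigr).
$$
We then apply Proposition \ref{prop:null}, which gives $P_+(u_x\Hh u_x)=-iQ_x^2$. This uses $P_+\bigl((P_-u_x)^2\bigr)=0$, valid since $(P_-u_x)^2$ has Fourier support in $(-\infty,0]$. The remaining term is handled by
$$
P_+(au_x)=aQ_x+R,\qquad (1+U)aQ_x=aU_x.
$$
Together these yield \eqref{eq:U-rough}.

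\emph{Main obstacle.} The delicate point is the product and commutator manipulations at the limiting regularity. The commutator identity $P_+(au_x)=aP_+u_x+[P_+,a]u_x$ needs every term to make sense: we have $a\in H^s$ and $u_x\in H^{s-1}\subset L^\infty\cap L^2$, so each term lies in $H^{s-1}$ and the identity holds by density. The identity $P_+((P_-u_x)^2)=0$ holds for $L^2$-products by a Fourier support argument. We expect these to be the only places requiring care, and they are handled by approximating with Schwartz functions and continuity of each bilinear term in $H^{s-1}\times H^{s-1}\to H^{s-1}$.
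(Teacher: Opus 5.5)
Your argument is correct and is essentially the paper's. You use the same composition and $H^s\cdot H^{s-2}\to H^{s-2}$ product bounds, the chain rule $U_t=(1+U)Q_t$, the null identity $P_+(u_x\Hh u_x)=-iQ_x^2$, the splitting $P_+(au_x)=aQ_x+R$, and the same local Lipschitz argument for $U\in C(I;L^2)$. The one difference is in the justification. The paper invokes an unspecified time-regularization and then multiplies the projected rough equation by $1+U$. You instead mollify in $x$, mollifying $F(u)$ as a whole so that the regularized identity is exact, carry out the algebra at the smooth level, and pass to the limit in $\mathcal D'$.

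There are two harmless slips. First, your first display for $U^\delta_t-iU^\delta_{xx}$ is wrong as written, but the form you then state ``concretely'' is the correct one. Second, $\rho_\delta*F(u)\to F(u)$ and $(Q_x^\delta)^2\to Q_x^2$ hold only in $L^\infty(I;H^{s-1-\epsilon})$ or $L^p_{\mathrm{loc}}(I;H^{s-1})$ with $p<\infty$, not in $L^\infty(I;H^{s-1})$. This weaker convergence is still enough for the distributional limit.
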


\begin{proof}
Since $s>3/2$, the product map
$$
H^s\cdot H^{s-2}\to H^{s-2}
$$
is continuous, and the standard Sobolev composition theorem gives $U=e^Q-1\in L^\infty(I;H^s)$. By the usual chain rule, justified by a standard time-regularization argument,
$$
U_t=(1+U)Q_t
\in L^\infty(I;H^{s-2}).
$$
Similarly,
$$
U_x=(1+U)Q_x,
$$
and
$$
U_{xx}=(1+U)(Q_{xx}+Q_x^2),
$$
in distributions. Applying $P_+$ to \eqref{eq:embo} and using Proposition \ref{prop:null},
\begin{equation}\label{eq: P_+}
Q_t-iQ_{xx}-iQ_x^2+\gamma P_+(au_x)=0.
\end{equation}
Moreover,
$$
P_+(au_x)
=
aQ_x+[P_+,a]u_x
=
aQ_x+R.
$$
Multiplying the equation \eqref{eq: P_+} by $1+U$ and using the identities above gives \eqref{eq:U-rough}.

Finally, the last assertion follows from the local Lipschitz continuity of the map
$$
u\mapsto e^{P_+u}-1
$$
on bounded subsets of $H^s$, with respect to the $L^2$-metric.
\end{proof}

\begin{remark}
As is usual in parabolic regularization arguments, we construct the solution only for positive times. The negative-time solution is then obtained from the symmetry
$$
u(t,x)=\widetilde u(-t,-x),
$$
where $\widetilde u$ is the forward solution with initial data $u_0(-x)$.
\end{remark}

Finally, we prove establish the $L^2$-stability estimate at the level at which the rough solutions constructed above are already known to be strongly continuous

\begin{lemma}\label{lem:rough-stability}
Let $s>3/2$, and let
$$
u_j\in L^\infty(I;H^s)\cap C(I;L^2),
$$
with $j=1,2$, be real distributional solutions of \eqref{eq:embo} on an interval $I\ni 0$. Assume
$$
|1+U_j|\ge c>0,
$$
and
$$
\sup_{t\in I}\norm{U_j(t)}_{H^s}\le M,
$$
where $U_j=e^{P_+u_j}-1.$ Then, for $t\in I$,
\begin{equation}\label{eq:L2stab}
\norm{U_1(t)-U_2(t)}_{L^2}^2
\le
e^{C|t|}\norm{U_1(0)-U_2(0)}_{L^2}^2,
\end{equation}
where $C=C(s,|\gamma|,c,M)$.
\end{lemma}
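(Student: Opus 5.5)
We will run an $L^2$ energy estimate for the difference $W:=U_1-U_2$, using the transformed equation \eqref{eq:U-rough} from Lemma \ref{lem:rough-transform}. Write $a_j=e^{u_j}-1$ and $R_j=[P_+,a_j]\partial_xu_j$. Subtracting the two copies of \eqref{eq:U-rough} gives
\begin{equation*}
W_t-iW_{xx}+\gamma a_1W_x
=-\gamma(a_1-a_2)\partial_xU_2-\gamma W R_1-\gamma(1+U_2)(R_1-R_2).
\end{equation*}

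\emph{Difference bounds.} By Lemma \ref{lem:inverse}, the hypotheses give $\norm{u_j}_{H^s}\le C_{s,c,M}$. The Lipschitz property of $\mathcal L$ at the $L^2$ level also holds here: since $|1+U_j|\ge c$ and the $U_j$ are bounded in $L^\infty$, the map $(\rho,\eta)\mapsto\ln((1+\rho)^2+\eta^2)$ is Lipschitz on the relevant range. Hence
\[
\norm{u_1-u_2}_{L^2}\lesssim\norm W_{L^2},
\qquad
\norm{a_1-a_2}_{L^2}\lesssim\norm W_{L^2},
\]
the second by the mean value theorem and the $L^\infty$ bounds. With these, the first two terms on the right are immediate:
\[
\norm{(a_1-a_2)\partial_xU_2}_{L^2}\le\norm{\partial_xU_2}_{L^\infty}\norm{a_1-a_2}_{L^2},
\qquad
\norm{WR_1}_{L^2}\le\norm{R_1}_{L^\infty}\norm W_{L^2},
\]
where $R_1\in H^s\subset L^\infty$ by \eqref{eq:Rbound}.

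\emph{The commutator difference.} Split
\[
R_1-R_2=[P_+,a_1-a_2]\partial_xu_1+[P_+,a_2]\partial_x(u_1-u_2).
\]
For the first piece, apply Lemma \ref{lem:hardycomm} with $\sigma=0$ and $r=s-1>1/2$; this gives a bound $\lesssim\norm{a_1-a_2}_{L^2}\norm{u_1}_{H^s}$. The second piece is the main obstacle, because the derivative falls on the difference and we only control the difference in $L^2$. Here the frequency structure from the remark after Lemma \ref{lem:hardycomm} saves us. The symbol is supported where $|\theta|\ge|\eta|$, so we can move the derivative onto $a_2$: writing $[P_+,a_2]\partial_x g$ with $g=u_1-u_2$, it equals a bilinear operator whose kernel carries $\eta/\theta$ with $|\eta/\theta|\le1$, applied to $(\partial_xa_2,g)$. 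A Coifman--Meyer-type bound, or simply Plancherel together with Young's inequality using $\widehat{\partial_xa_2}\in L^1$ (true since $a_2\in H^s$ with $s>3/2$), then gives $\lesssim\norm{a_2}_{H^s}\norm g_{L^2}$. Concretely, $|\mathcal F(\cdot)(\xi)|\le\int|\theta\widehat{a_2}(\theta)|\,|\widehat g(\xi-\theta)|\,\dd\theta$, so the $L^2$ norm is at most $\norm{\theta\widehat{a_2}}_{L^1}\norm g_{L^2}$. The same Young argument handles the first piece as well, using $\widehat{\partial_xu_1}\in L^1$.

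\emph{Closing the estimate.} Pair the difference equation with $W$ in $L^2$ and take real parts. This is justified since $W\in W^{1,\infty}(I;H^{s-2})\cap L^\infty(I;H^s)$, so $\langle W_t,W\rangle$ makes sense via duality $H^{s-2}\times H^{2-s}$ when $s\ge1$, together with a mollification in $x$. The Schrödinger term vanishes. The transport term gives $\tfrac12\int\partial_xa_1|W|^2$, which is bounded by $\norm{\partial_xa_1}_{L^\infty}\norm W_{L^2}^2$ because $a_1$ is real. All remaining terms are bounded by $C\norm W_{L^2}^2$ with $C=C(s,|\gamma|,c,M)$. This yields $\frac{\dd}{\dd t}\norm W_{L^2}^2\le C\norm W_{L^2}^2$ in the distributional sense on $I$. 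Gronwall's inequality, applied forward and backward in time and combined with $W\in C(I;L^2)$, then gives \eqref{eq:L2stab}.
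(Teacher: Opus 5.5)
Your proof is correct, and it departs from the paper's argument in two places.

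\emph{The commutator with the derivative on the difference.} The paper splits $R_1-R_2$ as $[P_+,a_1](u_1-u_2)_x+[P_+,a_1-a_2]u_{2,x}$, which is the mirror image of your split. For the term carrying the derivative on the difference, it invokes Calder\'on's first commutator theorem, $\norm{[P_+,a_1]\partial_x g}_{L^2}\lesssim\norm{a_{1,x}}_{L^\infty}\norm g_{L^2}$. You instead use the opposite-sign support of the one-sided commutator, which gives $|\eta|\le|\theta|$. Then Young's inequality on the Fourier side bounds the term by $\norm{\theta\widehat{a_2}}_{L^1}\norm g_{L^2}$, and $\norm{\theta\widehat{a_2}}_{L^1}\lesssim\norm{a_2}_{H^s}$ is finite exactly because $s>3/2$. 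Your route is elementary and self-contained. The price is that the coefficient must lie in $H^s$ with $s>3/2$ rather than merely be Lipschitz. That is harmless here, since the bound is available from the hypotheses.

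\emph{Justifying the energy identity.} The paper applies $P_{\le k}$ and pairs with $W_k=P_{\le k}W$. It then controls the extra commutator $[P_{\le k},a_1]\partial_xW$ uniformly in $k$ by a kernel argument and lets $k\to\infty$ using $W\in C(I;L^2)$. You observe instead that $W\in L^\infty(I;H^s)$, $W_t\in L^\infty(I;H^{s-2})$, and $H^s\subset H^{2-s}$. A Lions--Magenes-type argument then gives $\frac{\dd}{\dd t}\norm W_{L^2}^2=2\Rea\ip{W_t}{W}$ directly, and every term of the difference equation can be paired with $W$. In particular $W_x\in L^2$, so the integration by parts in the transport term is legitimate. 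This removes the truncation commutator altogether. The paper's truncation scheme is the more robust device, since it would survive if the difference were controlled only at a lower regularity. At the regularity assumed in this lemma, however, your direct pairing is fully justified.
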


\begin{proof}
For $j=1,2$, set
$$
a_j=e^{u_j}-1,
\quad
R_j=[P_+,a_j]u_{j,x},
$$
and
$$
W=U_1-U_2.
$$
By Lemma \ref{lem:rough-transform}, both transformed variables satisfy \eqref{eq:U-rough}. Subtracting the two equations gives, in distributions,
\begin{equation}\label{eq:W}
W_t-i W_{xx}+\gamma a_1W_x
=-\gamma(a_1-a_2)U_{2,x}
-\gamma WR_1
-\gamma(1+U_2)(R_1-R_2),
\end{equation}
where
\begin{equation}\label{eq:Rdiff}
R_1-R_2
=[P_+,a_1](u_1-u_2)_x
+[P_+,a_1-a_2]u_{2,x}.
\end{equation}

The inverse transform is pointwise Lipschitz on the set $|1+U_j|\ge c$, and the exponential is pointwise Lipschitz on bounded sets. Since the $H^s$ bounds imply uniform $L^\infty$ bounds, Lemma \ref{lem:inverse} and the mean-value theorem give
$$
\norm{u_1-u_2}_{L^2}
+
\norm{a_1-a_2}_{L^2}
\le
C_{s,c,M}\norm W_{L^2}.
$$
For the first term in \eqref{eq:Rdiff}, the first Calder\'on commutator estimate \cite{Calderon1965} yields
$$
\norm{[P_+,a_1](u_1-u_2)_x}_{L^2}
\lesssim
\norm{a_{1,x}}_{L^\infty}
\norm{u_1-u_2}_{L^2}
\le C\norm W_{L^2}.
$$
For the second term, Lemma \ref{lem:hardycomm}, with output regularity $0$ and input exponent $s-1>1/2$, gives
$$
\norm{[P_+,a_1-a_2]u_{2,x}}_{L^2}
\lesssim_s
\norm{a_1-a_2}_{L^2}
\norm{u_2}_{H^s}
\le C\norm W_{L^2}.
$$
Hence
$$
\norm{R_1-R_2}_{L^2}
\le C\norm W_{L^2}.
$$
In addition, \eqref{eq:Rbound} and Sobolev embedding imply
$$
\norm{R_1}_{L^\infty}
+
\norm{U_{2,x}}_{L^\infty}
+
\norm{a_{1,x}}_{L^\infty}
\le C.
$$
Therefore, if
$$
F
:=-\gamma(a_1-a_2)U_{2,x}
-\gamma WR_1
-\gamma(1+U_2)(R_1-R_2),
$$
then
\begin{equation}\label{eq:FdiffL2}
\norm F_{L^2}\le C\norm W_{L^2}.
\end{equation}

We now justify the energy estimate. Put $W_k=P_{\le k} W$. Applying $P_{\le k}$ to \eqref{eq:W} gives
$$
\partial_tW_k-i\partial_x^2W_k
+\gamma a_1\partial_xW_k
=
P_{k\leq k}F-\gamma[P_{\le k},a_1]\partial_x W.
$$
All terms in this equation belong to $L^1_{\rm loc}(I;L^2)$ after the frequency truncation, so taking the real $L^2$ inner product with $W_k$ is legitimate. The Schr\"odinger term is skew-adjoint, while
$$
\Rea\ip{a_1\partial_xW_k}{W_k}
=-\frac12\int_\R a_{1,x}|W_k|^2\dd x.
$$
The standard kernel estimate for a smooth Littlewood--Paley projector gives, uniformly in $k$,
$$
\norm{[P_{\le k},a_1]\partial_x W}_{L^2}
\lesssim
\norm{a_{1,x}}_{L^\infty}\norm W_{L^2}.
$$
Indeed, if $K_k$ is the convolution kernel of $P_{\le k}$, then
$$
[P_{\le k},a_1]\partial_x W(x)
=
\int K_k(x-y)\bigl(a_1(y)-a_1(x)\bigr)\partial_x W(y)\dd y,
$$
and integration by parts in $y$, together with the Lipschitz bound for $a_1$, gives the stated uniform estimate. Using also \eqref{eq:FdiffL2}, we obtain
$$
\frac{\dd}{\dd t}\norm{W_k(t)}_{L^2}^2
\le
C\norm{W(t)}_{L^2}^2.
$$
After integration from $0$ to $t$,
$$
\norm{W_k(t)}_{L^2}^2
\le
\norm{W_k(0)}_{L^2}^2
+C\int_0^{|t|}\norm{W(\tau)}_{L^2}^2\dd\tau,
$$
with the standard change of orientation for negative time. Since $W\in C(I;L^2)$, letting $k\to\infty$ gives
$$
\norm{W(t)}_{L^2}^2
\le
\norm{W(0)}_{L^2}^2
+C\int_0^{|t|}\norm{W(\tau)}_{L^2}^2\dd\tau.
$$
Gronwall's inequality proves \eqref{eq:L2stab}.
\end{proof}

In particular, two solutions in the above class with the same initial data coincide. This proves uniqueness.

\section{Continuous dependence and persistence of solutions}
\label{sec:stability}

\subsection{Frequency envelopes}

We now use frequency envelopes to improve the low-norm stability estimate in Lemma \ref{lem:rough-stability}. Frequency envelopes were introduced by Tao in the study of wave maps \cite{TaoWaveMaps2001}; see also Ifrim and Tataru \cite{IfrimTataru2023} for their role in quasilinear well-posedness, and \cite{MR4902804,IfrimTataru2024,IfrimPineauTataruTaylor2025} for further applications in different settings.

For the rest of this section, fix
$$
0<\delta<\frac12.
$$
For $\sigma\ge s$ and $f\in H^\sigma$, set, for $k\ge0$
$$
a_k^{(\sigma)}(f)
:=
2^{\sigma k}\norm{P_kf}_{L^2}.
$$

\begin{definition}[Admissible frequency envelope]
A positive sequence $c=(c_k)_{k\ge0}\in\ell^2$ is called a \textit{$\delta$-admissible $H^\sigma$ frequency envelope} for $f$ if
$$
a_k^{(\sigma)}(f)\le c_k
$$
and
$$
c_j\le 2^{\delta|j-k|}c_k
$$
for every $j,k\ge0$.
\end{definition}

A convenient choice is
\begin{equation}\label{eq:canonical-envelope}
c_k(f;\sigma)^2
=
2^{-2\delta k}\norm f_{H^\sigma}^2
+
\sum_{m\ge0}
2^{-2\delta|k-m|}
\bigl(a_m^{(\sigma)}(f)\bigr)^2.
\end{equation}
The first term prevents degeneracy of the envelope components and is useful when taking ratios.

\begin{lemma}\label{lem:canonical-envelope}
Let $\sigma\in\R$ and $0<\delta<1$. The sequence defined by \eqref{eq:canonical-envelope} satisfies
\begin{equation}\label{eq:canonical-basic}
a_k^{(\sigma)}(f)\le c_k(f;\sigma),
\qquad
c_j(f;\sigma)\le
2^{\delta|j-k|}c_k(f;\sigma),
\end{equation}
and
\begin{equation}\label{eq:canonical-l2}
\norm{c(f;\sigma)}_{\ell^2}
\sim
\norm f_{H^\sigma}.
\end{equation}
Moreover,
\begin{equation}\label{eq:canonical-lipschitz}
\norm{c(f;\sigma)-c(g;\sigma)}_{\ell^2}
\lesssim
\norm{f-g}_{H^\sigma}.
\end{equation}
\end{lemma}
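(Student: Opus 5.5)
The plan is to prove the three assertions separately, treating $c(f;\sigma)$ as a weighted $\ell^2$-type norm of the vector $\bigl(\norm f_{H^\sigma},(a_m^{(\sigma)}(f))_m\bigr)$. This viewpoint gives the Lipschitz bound almost for free.

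\emph{Pointwise properties.} The first inequality in \eqref{eq:canonical-basic} is immediate: keep only the term $m=k$ in the sum. For the slowly varying property, I will use the triangle inequality $|j-m|\ge |k-m|-|j-k|$. This gives $2^{-2\delta|j-m|}\le 2^{2\delta|j-k|}2^{-2\delta|k-m|}$, and in the same way $2^{-2\delta j}\le 2^{2\delta|j-k|}2^{-2\delta k}$. Summing these bounds and taking square roots yields $c_j\le 2^{\delta|j-k|}c_k$.

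\emph{$\ell^2$ equivalence.} For the lower bound, $\norm{c}_{\ell^2}^2\ge\sum_k a_k^2\sim\norm f_{H^\sigma}^2$ by the Littlewood--Paley characterization of $H^\sigma$. Note that this characterization holds for every real $\sigma$, since the $P_k$ are smooth and almost orthogonal. For the upper bound, sum over $k$ and exchange the order of summation, using $\sum_k 2^{-2\delta|k-m|}\le C_\delta$ and $\sum_k 2^{-2\delta k}\le C_\delta$. This gives $\norm c_{\ell^2}^2\le C_\delta\bigl(\norm f_{H^\sigma}^2+\sum_m a_m^2\bigr)\lesssim\norm f_{H^\sigma}^2$.

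\emph{Lipschitz bound.} For each $k$, the quantity $c_k(f)$ is the weighted $\ell^2$ norm of the vector $v(f)=\bigl(\norm f_{H^\sigma},(a_m(f))_m\bigr)$ with weights $(2^{-2\delta k},2^{-2\delta|k-m|})$. The reverse triangle inequality for this seminorm then gives
$$|c_k(f)-c_k(g)|^2\le 2^{-2\delta k}\bigl|\norm f_{H^\sigma}-\norm g_{H^\sigma}\bigr|^2+\sum_m2^{-2\delta|k-m|}|a_m(f)-a_m(g)|^2 .$$
Next, the reverse triangle inequality in $L^2$ gives $|a_m(f)-a_m(g)|\le 2^{\sigma m}\norm{P_m(f-g)}_{L^2}$, and similarly $\bigl|\norm f_{H^\sigma}-\norm g_{H^\sigma}\bigr|\le\norm{f-g}_{H^\sigma}$. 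Summing over $k$ with the same Schur-type bound as above yields $\norm{c(f)-c(g)}_{\ell^2}^2\lesssim_\delta\norm{f-g}_{H^\sigma}^2$.

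I expect no real obstacle here; the argument is elementary. The only points to watch are recognizing $c_k$ as a norm, so that the reverse triangle inequality applies, and keeping the implicit constants dependent only on $\delta$ and $\sigma$. The $\delta$-dependence blows up like $(1-2^{-2\delta})^{-1}$ as $\delta\to0$, which is harmless because $\delta$ is fixed.
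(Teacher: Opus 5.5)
Your proposal is correct and follows the paper's proof essentially step by step: the $m=k$ term gives the pointwise bound, the triangle inequality on the weights gives slow variation, and exchanging the order of summation gives the $\ell^2$ equivalence. For the Lipschitz bound you, like the paper, view $c_k$ as a weighted norm of $\bigl(\norm f_{H^\sigma},(a_m^{(\sigma)}(f))_m\bigr)$ and apply the reverse triangle inequality; your intermediate step $|a_m(f)-a_m(g)|\le 2^{\sigma m}\norm{P_m(f-g)}_{L^2}$ just makes explicit what the paper compresses into one line.
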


\begin{proof}
The first estimate in \eqref{eq:canonical-basic} follows from the term $m=k$ in \eqref{eq:canonical-envelope}. The slow variation follows from
$$
2^{-\delta j}
\le
2^{\delta|j-k|}2^{-\delta k},
$$
and
$$
2^{-\delta|j-m|}
\le
2^{\delta|j-k|}
2^{-\delta|k-m|}.
$$
Thus
$$
c_j(f;\sigma)
\le
2^{\delta|j-k|}
c_k(f;\sigma).
$$

Summing \eqref{eq:canonical-envelope} in $k$ and changing the order of summation gives
$$
\sum_{k\ge0}c_k(f;\sigma)^2
\lesssim
\norm f_{H^\sigma}^2.
$$
The converse follows from
$$
a_k^{(\sigma)}(f)\le c_k(f;\sigma)
$$
and the Littlewood--Paley characterization of $H^\sigma$.

For \eqref{eq:canonical-lipschitz}, view $c_k(f;\sigma)$ as the norm in $\R\oplus\ell^2$ of
$$
\left(
2^{-\delta k}\norm f_{H^\sigma},
\,
\bigl(
2^{-\delta|k-m|}
a_m^{(\sigma)}(f)
\bigr)_{m\ge0}
\right).
$$
The reverse triangle inequality gives
$$
\begin{aligned}
|c_k(f;\sigma)-c_k(g;\sigma)|^2
&\le
2^{-2\delta k}\norm{f-g}_{H^\sigma}^2+
\sum_{m\ge0}
2^{-2\delta|k-m|}
2^{2\sigma m}
\norm{P_m(f-g)}_{L^2}^2.
\end{aligned}
$$
Summing in $k$ proves \eqref{eq:canonical-lipschitz}.
\end{proof}

Consequently, if $f_n\to f$ in $H^\sigma$, then
\begin{equation}\label{eq:uniform-envelope-tail}
\lim_{K\to\infty}
\sup_n
\sum_{k>K}c_k(f_n;\sigma)^2
=
0.
\end{equation}

For a fixed admissible envelope $c$, define
$$
\norm f_{H_c^\sigma}
:=
\sup_{k\ge0}
\frac{2^{\sigma k}\norm{P_kf}_{L^2}}{c_k}.
$$
The slow variation of $c$ allows the usual paraproduct estimates to be applied uniformly in $k$.

\begin{lemma}\label{lem:envelope-calculus}
Let $s>3/2$, $\sigma\ge s$, and let $c$ be a $\delta$-admissible envelope. Then
\begin{equation}\label{eq:envelope-product}
\norm{fg}_{H_c^\sigma}
\lesssim
\norm f_{H^s}\norm g_{H_c^\sigma}
+
\norm g_{H^s}\norm f_{H_c^\sigma}.
\end{equation}
If $F$ is smooth on a neighborhood of the range of $f$, with $F(0)=0$, then
\begin{equation}\label{eq:envelope-composition}
\norm{F(f)}_{H_c^\sigma}
\le
C_{F,\norm f_{H^s}}
\norm f_{H_c^\sigma}.
\end{equation}
Finally, for smooth $f,g$,
\begin{equation}\label{eq:envelope-dyadic-commutator}
\sup_{k\ge0}
\frac{2^{\sigma k}}{c_k}
\norm{[P_k,f]g_x}_{L^2}
\lesssim
\norm{f_x}_{L^\infty}\norm g_{H_c^\sigma}
+
\norm{g_x}_{L^\infty}\norm f_{H_c^\sigma}.
\end{equation}
\end{lemma}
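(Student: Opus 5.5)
All three estimates will be proved with the Littlewood--Paley (Bony) paraproduct decomposition, carried out dyadic block by dyadic block. The one new ingredient, compared with the classical Sobolev proofs, is the slow variation $c_j\le 2^{\delta|j-k|}c_k$ with $\delta<1/2$. It lets us move envelope weights between neighbouring frequencies, at the cost of geometric factors $2^{\delta|j-k|}$. These factors will always be absorbed either by the $2^{-(s-1/2)|j-k|}$ decay coming from Bernstein's inequality, or by the $2^{-\sigma(j-k)}$ decay coming from high-to-low summation.

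\emph{Step 1: the product estimate.} For fixed $k$, write
$$P_k(fg)=P_k\Bigl(\sum_j P_{\le j-3}f\,P_jg+\sum_j P_jf\,P_{\le j-3}g+\sum_{|j-j'|\le2}P_jf\,P_{j'}g\Bigr).$$
In the first sum only $|j-k|\le 3$ contributes. We bound it by
$$\norm{f}_{L^\infty}\norm{P_jg}_{L^2}\lesssim \norm f_{H^s}\,2^{-\sigma j}c_j\,\norm g_{H^\sigma_c},$$
and then use $c_j\lesssim c_k$. The second sum is symmetric. In the high--high sum we have $j\ge k-3$, and Bernstein's inequality at frequency $2^k$ gives
$$\norm{P_k(P_jf\,P_{j'}g)}_{L^2}\lesssim 2^{k/2}\norm{P_jf}_{L^2}\norm{P_{j'}g}_{L^2}\lesssim 2^{k/2}2^{-sj}\norm f_{H^s}\,2^{-\sigma j}c_j\norm g_{H^\sigma_c}.$$
Multiplying by $2^{\sigma k}/c_k$ and using $c_j\le 2^{\delta(j-k)}c_k$, we are left with $\sum_{j\ge k-3}2^{(\sigma+1/2)k-(\sigma+s-\delta)j}$. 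This is bounded because $s>1/2+\delta$, which holds since $s>3/2$ and $\delta<1/2$. This proves \eqref{eq:envelope-product}.

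\emph{Step 2: the composition estimate.} Expand $F$ by Taylor's formula (or by a power series, as in Lemma \ref{lem:exp}, which suffices for our uses, namely $e^u-1$, $\ln$, and $(1+U)^{-1}-1$). I would rather use Meyer's paralinearization/telescoping, which does not need analyticity:
$$F(f)=\sum_k\bigl[F(P_{\le k}f)-F(P_{\le k-1}f)\bigr]=\sum_k m_k\,P_kf,\qquad m_k=\int_0^1F'(P_{\le k-1}f+\theta P_kf)\dd\theta .$$
The multipliers satisfy $\norm{\partial_x^\ell m_k}_{L^\infty}\lesssim_\ell C_{F,\norm f_{H^s}}2^{\ell k}$ for $\ell\le N$. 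Hence
$$\norm{P_j(m_kP_kf)}_{L^2}\lesssim 2^{-N(j-k)_+}\norm{P_kf}_{L^2},$$
obtained by integrating by parts when $j>k+2$. Summing over $k$ with the envelope weights gives a convergent sum: for $k>j$ it converges by the $2^{-(\sigma-\delta)(k-j)}$ decay, and for $k<j$ by choosing $N>\sigma+\delta$. This proves \eqref{eq:envelope-composition}.

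\emph{Step 3: the dyadic commutator, which I expect to be the main obstacle.} Decompose $f$ into the parts $P_{\le k-4}f$, $\sum_{|j-k|\le3}P_jf$, and $P_{>k+3}f$. For the low part, $[P_k,P_{\le k-4}f]g_x$ only sees $\widetilde P_k g$. The kernel bound used in the proof of Lemma \ref{lem:rough-stability}, namely
$$\norm{[P_k,h]\partial_x v}_{L^2}\lesssim\norm{h_x}_{L^\infty}\norm v_{L^2},$$
then gives the contribution $\norm{f_x}_{L^\infty}2^{-\sigma k}c_k\norm g_{H_c^\sigma}$. For the comparable-frequency part, put the derivative on $g$ as an $L^\infty$ factor: for the terms $P_j f\,P_{\le j}g_x$ we use $\norm{g_x}_{L^\infty}\norm{P_jf}_{L^2}$. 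For the high--high part, $P_k(P_jf\,P_{j'}g_x)$ with $j\sim j'>k+3$, we write $P_{j'}g_x=\partial_x P_{j'}g$ and estimate by
$$2^{j}\norm{P_jf}_{L^\infty}\norm{P_{j'}g}_{L^2}\lesssim \norm{f_x}_{L^\infty}\norm{P_{j'}g}_{L^2},$$
which is summed with the decay $2^{-(\sigma-\delta)(j-k)}$, valid since $\sigma>\delta$. The remaining piece is $f\,P_kg_x$ with $f$ at high frequency and $g$ at low frequency, which is nonzero only when its output lands near $2^k$. It therefore requires $f$ at frequency comparable to $2^k$, which brings us back to the comparable-frequency case. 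The delicate bookkeeping here is making sure that each derivative lands on a factor measured in $L^\infty$, so that only $\norm{f_x}_{L^\infty}$ and $\norm{g_x}_{L^\infty}$ appear and no derivative is lost. A Coifman--Meyer-type trilinear bound is the fallback if the direct summation gets messy.
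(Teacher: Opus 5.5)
Your Steps 1 and 2 are sound and follow the paper's sketch. Your Meyer telescoping is the standard route to the paralinearization the paper cites, and it only uses $\norm f_{L^\infty}$.

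\textbf{The gap is in Step 3}, in the part where $f$ is at high frequency. Split
\[
[P_k,P_{>k+3}f]g_x=P_k\bigl(P_{>k+3}f\,g_x\bigr)-P_{>k+3}f\,P_kg_x .
\]
Your high--high bound handles the first term completely, since there $P_k$ forces $g$ to frequency $\sim2^j$.

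The second term, however, has no outer projection. Each piece $P_jf\,P_kg_x$ with $j\ge k+4$ sits at output frequency $\sim2^j$ and does not vanish. So your assertion that this piece is nonzero only when its output lands near $2^k$, and therefore reduces to the comparable-frequency case, is false. As written, the term is never estimated.

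It is easy to bound directly, keeping the derivative on $g$ in $L^\infty$:
\[
\norm{P_{>k+3}f\,P_kg_x}_{L^2}
\le\norm{P_kg_x}_{L^\infty}\sum_{j\ge k+4}\norm{P_jf}_{L^2}
\lesssim\norm{g_x}_{L^\infty}\norm f_{H_c^\sigma}\sum_{j\ge k+4}2^{-\sigma j}c_j
\lesssim\norm{g_x}_{L^\infty}\norm f_{H_c^\sigma}\,2^{-\sigma k}c_k .
\]
The last step uses $c_j\le2^{\delta(j-k)}c_k$ and $\sigma>\delta$. This is of the form allowed on the right of \eqref{eq:envelope-dyadic-commutator}, so with this repair Step 3 closes.

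\textbf{A smaller point in Step 2.} The telescoping evaluates $F'$ on the ranges of $P_{\le k-1}f+\theta P_kf$. For small $k$ these need not lie in the neighborhood where $F$ is smooth. This matters for $u=2\ln|1+U|$, where $|1+P_{\le k}U|$ may approach $0$ even though $|1+U|\ge c$.

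First replace $F$ by $\widetilde F=\psi F$, with $\psi$ a cutoff equal to $1$ near the closure of the range of $f$ and supported in that neighborhood. Since $f$ vanishes at infinity, $0$ lies in this closure. Hence $\widetilde F(0)=0$ and $\widetilde F(f)=F(f)$. Your bounds $\norm{\partial_x^\ell m_k}_{L^\infty}\lesssim 2^{\ell k}$ then hold, with constants depending on $\widetilde F$ and $\norm f_{L^\infty}$.
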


\begin{proof}
These are standard consequences of the nonhomogeneous Bony decomposition, paralinearization, and dyadic commutator estimates; see \cite[Sections~2.8 and 2.10]{BahouriCheminDanchin2011}. We only indicate how the frequency envelope enters the estimates.

For the product estimate, write
$$
fg=T_fg+T_gf+R(f,g).
$$
In the low--high interactions, the output frequency $k$ forces the high input to have frequency $k+O(1)$, and hence the slow variation of $c$ gives
$$
\frac{2^{\sigma k}}{c_k}
\norm{P_kT_fg}_{L^2}
\lesssim
\norm f_{H^s}\norm g_{H_c^\sigma},
$$
and similarly with $f$ and $g$ interchanged. For the high--high interactions, the input frequencies satisfy
$j\ge k-C$. Using Bernstein and the envelope bound on one factor,
$$
\frac{2^{\sigma k}}{c_k}
\norm{P_kR(f,g)}_{L^2}
\lesssim
\norm f_{H^s}\norm g_{H_c^\sigma}
\sum_{j\ge k-C}
2^{\sigma k+k/2-(s+\sigma)j}
\frac{c_j}{c_k}.
$$
Since
$$
\frac{c_j}{c_k}
\lesssim
2^{\delta(j-k)},
$$
the sum is uniformly bounded. This proves \eqref{eq:envelope-product}.

The composition estimate follows in the same way from the standard paralinearization
$$
F(f)=T_{F'(f)}f+\mathcal R_F(f),
$$
where the remainder contains at least two comparable high frequencies. Finally, \eqref{eq:envelope-dyadic-commutator} follows from the standard dyadic commutator estimate, again using the slow variation of $c$ to compare neighboring envelope components.
\end{proof}

The special one-sided commutator appearing in our equation satisfies a stronger estimate. This is the dyadic version of Lemma \ref{lem:hardycomm}.

\begin{lemma}\label{lem:envelope-hardy}
Let $s>3/2$, $\sigma\ge s$, and let $c$ be $\delta$-admissible. Then
\begin{equation}\label{eq:envelope-hardy}
\norm{[P_{\pm},f]g_x}_{H_c^\sigma}
\lesssim_{s,\sigma,\delta}
\norm f_{H_c^\sigma}\norm g_{H^s}.
\end{equation}
\end{lemma}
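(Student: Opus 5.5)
We argue on the Fourier side, using the frequency geometry described in the remark after Lemma \ref{lem:hardycomm}. By symmetry it suffices to treat $P_+$. Write
$$
\mathcal F\bigl([P_+,f]g_x\bigr)(\xi)=\frac{1}{2\pi}\int_\R\bigl(\mathbf 1_{\xi>0}-\mathbf 1_{\eta>0}\bigr)\widehat f(\xi-\eta)\,i\eta\,\widehat g(\eta)\dd\eta .
$$
The multiplier vanishes unless $\xi$ and $\eta$ have opposite signs. In that case $\theta=\xi-\eta$ satisfies $|\theta|=|\xi|+|\eta|$, so $|\theta|\ge\max(|\xi|,|\eta|)$.

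We decompose $f=\sum_j P_jf$ and $g=\sum_l P_lg$ dyadically and localize the output to frequency $2^k$. The support condition forces $2^j\gtrsim\max(2^k,2^l)$. Up to the $O(1)$ overlap of neighbouring dyadic blocks, only two regimes survive. In the \emph{high-low} regime, $j=k+O(1)$ and $l\le k+O(1)$. In the \emph{high-high} regime, $j\ge k+O(1)$ and $l\le j+O(1)$, and the output frequency is below that of $f$. The low-high interaction, where $f$ is at low frequency and $g$ at high frequency, is absent; this is the interaction that would cost a derivative. There is no symbol smoothness to exploit, since the multiplier is a sharp indicator. We therefore bound each piece crudely, using $\|[P_+,F]G\|_{L^2}\le 2\|FG\|_{L^2}$ together with $L^2$ boundedness of $P_k$, and replace the cancellation by the support restriction.

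In the high-low regime, the piece is $P_k[P_+,P_jf]\,\partial_xP_lg$ with $|j-k|\le C$. Bounding $\|\partial_xP_lg\|_{L^\infty}\lesssim 2^{l}2^{l/2}\|P_lg\|_{L^2}$ and summing over $l\le k+C$ gives a bound $\lesssim\|g\|_{H^s}$, because $s>3/2$ makes $\sum_l 2^{(3/2-s)l}$ finite. (Alternatively, $\|g_x\|_{L^\infty}\lesssim\|g\|_{H^s}$ directly.) We then get
$$
2^{\sigma k}\|P_k(\cdot)\|_{L^2}\lesssim 2^{\sigma k}\|P_jf\|_{L^2}\|g\|_{H^s}\lesssim c_j\|f\|_{H_c^\sigma}\|g\|_{H^s},
$$
and $c_j\le2^{C\delta}c_k$ for $|j-k|\le C$.

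In the high-high regime, the main obstacle is to show that the sum over $j\ge k$ still closes against the envelope. Here the derivative on $g$ costs $2^l\le 2^{j}$, and this factor can be paid by $f$. By Bernstein at the output frequency,
$$
\|P_k(P_jf\,\partial_xP_lg)\|_{L^2}\lesssim 2^{k/2}\|P_jf\|_{L^2}\|\partial_xP_lg\|_{L^2}\lesssim 2^{k/2}2^{-\sigma j}c_j\|f\|_{H_c^\sigma}\,2^{(1-s)l}\|g\|_{H^s}.
$$
Summing over $l\le j+C$ gives at most $2^{(1-s)_+ j}$; in fact, since $s>1$, the sum is bounded. The remaining $j$-sum is then
$$
\frac{2^{\sigma k}}{c_k}\sum_{j\ge k-C}2^{k/2-\sigma j}c_j\lesssim 2^{k/2}\sum_{j\ge k-C}2^{-(\sigma-\delta)(j-k)}\|f\|_{H_c^\sigma}\|g\|_{H^s}.
$$
The factor $2^{k/2}$ is bad, so this naive Bernstein placement must be improved. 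Instead, we put $\partial_xP_lg$ in $L^\infty$: $\|\partial_xP_lg\|_{L^\infty}\lesssim2^{(3/2-s)l}\|g\|_{H^s}$, which is summable in $l$ uniformly because $s>3/2$. Then
$$
\|P_k(P_jf\,\partial_xP_lg)\|_{L^2}\lesssim\|P_jf\|_{L^2}2^{(3/2-s)l}\|g\|_{H^s},
$$
and the $j$-sum becomes $\sum_{j\ge k-C}2^{-\sigma(j-k)}c_j/c_k\lesssim\sum 2^{-(\sigma-\delta)(j-k)}$. This converges because $\sigma\ge s>3/2>\delta$.

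Collecting both regimes and taking the supremum over $k$ gives \eqref{eq:envelope-hardy}. The implicit constant depends only on $s$, $\sigma$ and $\delta$. The case $P_-$ is identical, with the signs of $\xi$ and $\eta$ reversed.
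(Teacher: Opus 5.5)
Your proof is correct, but in the high--high regime it takes a different route from the paper.

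\textbf{The paper's route.} It uses the full support geometry $j=\max\{k,\ell\}+O(1)$. For $\ell\ge k+C$ it therefore has $j=\ell+O(1)$. It applies Bernstein at the output frequency, costing $2^{k/2}$, with both inputs in $L^2$. The resulting factor satisfies $2^{k/2}2^{(1-s)\ell}\le 2^{(3/2-s)k}$, which is harmless.

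\textbf{Why your first attempt stalled.} Your description of that regime ($l\le j+O(1)$) dropped the constraint $j=l+O(1)$. The $l$-sum was then dominated by small $l$ and gave no decay to absorb $2^{k/2}$.

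\textbf{Your fix.} H\"older with $\partial_xP_lg$ in $L^\infty$ and $P_jf$ in $L^2$ is valid, and it uses less. Only $j\ge k-O(1)$, i.e.\ $|\theta|\ge|\xi|$, enters. The $l$-sum is controlled by $\sum_l 2^{3l/2}\|P_lg\|_{L^2}\lesssim\|g\|_{H^s}$, and the $j$-sum converges because $\sigma>\delta$.

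\textbf{What each approach buys.} Your argument is in effect the dyadic form of a one-line Fourier proof: $\langle\xi\rangle^\sigma\le\langle\theta\rangle^\sigma$ on the support, plus Young's inequality with $\widehat{g_x}\in L^1$. The paper's Bernstein route gains an extra factor $2^{-(\ell-k)/2}$ over your bound when the output lies far below the inputs. That gain is not needed here.

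\textbf{One slip to fix.} The inequality $\|[P_+,F]G\|_{L^2}\le 2\|FG\|_{L^2}$ is false in general. If $F$ and $G$ have disjoint supports, then $FG=0$, while $[P_+,F]G=-FP_+G$ is typically nonzero. What you actually use is
$\|[P_+,F]G\|_{L^2}\le\|F\|_{L^2}\bigl(\|G\|_{L^\infty}+\|P_+G\|_{L^\infty}\bigr)$.
Since $P_+\partial_xP_lg$ is still localized at frequency $2^l$, Bernstein gives $\|P_+\partial_xP_lg\|_{L^\infty}\lesssim 2^{3l/2}\|P_lg\|_{L^2}$. So the second term of the commutator is handled exactly like the product term you wrote down.
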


\begin{proof}
We prove the estimate for $P_+$, the case $P_-$ is identical. Write
$$
f=\sum_{j \geq 0} P_jf,
\qquad
g=\sum_{\ell \geq 0} P_\ell g.
$$
The Fourier symbol of $[P_+,f]g_x$ vanishes unless the output frequency $\xi$ and the $g$-frequency $\eta$ have opposite signs. On the nonzero support,
$$
|\xi-\eta|
=
|\xi|+|\eta|.
$$
Thus, if the output has size $2^k$ and $P_\ell g$ has size $2^\ell$, then the $f$-frequency satisfies
\begin{equation}\label{eq:envelope-sign-geometry}
j=\max\{k,\ell\}+O(1).
\end{equation}

First suppose $\ell\le k+C$. Then $j=k+O(1)$, up to the comparable-frequency case, and Bernstein gives
$$
\norm{P_k[P_+,P_jf]\partial_xP_\ell g}_{L^2}
\lesssim
\norm{P_jf}_{L^2}
2^{3\ell/2}\norm{P_\ell g}_{L^2}.
$$
Hence
$$
\begin{aligned}
2^{\sigma k}
\sum_{\ell\le k+C}
\norm{P_k[P_+,P_jf]\partial_xP_\ell g}_{L^2}
&\lesssim
\norm f_{H_c^\sigma}c_k
\norm g_{H^s}
\sum_{\ell\le k+C}
2^{(3/2-s)\ell}\lesssim
\norm f_{H_c^\sigma}
\norm g_{H^s}c_k,
\end{aligned}
$$
since $s>3/2$.

Now suppose $\ell\ge k+C$. By \eqref{eq:envelope-sign-geometry},
$j=\ell+O(1)$. Since the output frequency is lower than both inputs,
$$
\norm{P_k(P_jf\partial_xP_\ell g)}_{L^2}
\lesssim
2^{k/2}2^\ell
\norm{P_jf}_{L^2}
\norm{P_\ell g}_{L^2}.
$$
Therefore
$$
\begin{aligned}
2^{\sigma k}
\sum_{\ell\ge k+C}
\norm{P_k(P_jf\partial_xP_\ell g)}_{L^2}
&\lesssim
\norm f_{H_c^\sigma}
\norm g_{H^s}
2^{\sigma k+k/2}
\sum_{\ell\ge k+C}
c_\ell
2^{-(\sigma+s-1)\ell}\\
&\lesssim
\norm f_{H_c^\sigma}
\norm g_{H^s}
c_k
2^{(3/2-s)k}
\sum_{\ell\ge k+C}
2^{-(\sigma+s-1-\delta)(\ell-k)}\\
&\lesssim
\norm f_{H_c^\sigma}
\norm g_{H^s}c_k.
\end{aligned}
$$
The comparable-frequency cases are covered by the same estimates. This proves \eqref{eq:envelope-hardy}.
\end{proof}

\subsection{Propagation of frequency envelopes}

We now show that an envelope for the initial data controls the same dyadic Sobolev profile throughout the lifespan.

\begin{proposition}\label{prop:propagate-envelope}
Let $s>3/2$, $\sigma\ge s$, and let $u$ be a smooth real-valued solution of \eqref{eq:embo} on an interval $I\ni0$. Assume
$$
\sup_{t\in I}\norm{u(t)}_{H^s}\le M.
$$
If $c$ is a $\delta$-admissible $H^\sigma$ frequency envelope for $u(0)$, then
\begin{equation}\label{eq:propagated-envelope-U}
\sup_{t\in I}
2^{\sigma k}\norm{P_kU(t)}_{L^2}
\le
C_{M,I,s,\sigma,\gamma,\delta}c_k
\end{equation}
for every $k\ge0$. Consequently,
\begin{equation}\label{eq:propagated-envelope-u}
\sup_{t\in I}
2^{\sigma k}\norm{P_ku(t)}_{L^2}
\le
C_{M,I,s,\sigma,\gamma,\delta}c_k.
\end{equation}
\end{proposition}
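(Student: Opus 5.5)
We will run a dyadic energy estimate on the transformed equation \eqref{eq:U}, measured in the envelope norm $H^\sigma_c$. First, the initial envelope transfers to $U(0)$. Since $U(0)=e^{P_+u(0)}-1$ and $P_+$ commutes with every $P_k$, the composition estimate \eqref{eq:envelope-composition} gives
$$
\norm{U(0)}_{H^\sigma_c}\le C_M.
$$
It also gives $|1+U(0)|\ge c_*(M)>0$, since $|1+U|=e^{u/2}$ and $\norm{u}_{L^\infty}\lesssim M$. In fact, for real $u$ the bound $|1+U(t)|=e^{u(t)/2}\ge e^{-CM}$ holds for all $t\in I$, so the lower bound needed in Lemma \ref{lem:inverse} is automatic. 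Likewise $\norm{U(t)}_{H^s}\le C_M$ on $I$.

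Next, apply $P_k$ to \eqref{eq:U} and pair with $P_kU$ in $L^2$, taking real parts. The Schr\"odinger term is skew and drops out. For the transport term we write
$$
P_k(aU_x)=aP_kU_x+[P_k,a]U_x .
$$
Integration by parts bounds the first piece by $\norm{a_x}_{L^\infty}\norm{P_kU}_{L^2}^2$. The second piece is controlled by \eqref{eq:envelope-dyadic-commutator}:
$$
\frac{2^{\sigma k}}{c_k}\norm{[P_k,a]U_x}_{L^2}\lesssim \norm{a_x}_{L^\infty}\norm U_{H^\sigma_c}+\norm{U_x}_{L^\infty}\norm a_{H^\sigma_c}.
$$
Here $\norm a_{H^\sigma_c}\le C_M\norm u_{H^\sigma_c}$ by \eqref{eq:envelope-composition}, and the $L^\infty$ norms are bounded by $C_M$ since $s>3/2$. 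For the source term $(1+U)R$, we use \eqref{eq:envelope-product} together with Lemma \ref{lem:envelope-hardy}:
$$
\norm{R}_{H^\sigma_c}\lesssim \norm a_{H^\sigma_c}\norm u_{H^s}\le C_M\norm u_{H^\sigma_c}.
$$
The $H^s$ norm of $R$ is bounded by \eqref{eq:Rbound}. Altogether this yields
$$
\frac{\dd}{\dd t}\bigl(2^{2\sigma k}\norm{P_kU}_{L^2}^2\bigr)\le C_M\,2^{\sigma k}\norm{P_kU}_{L^2}\,c_k\bigl(\norm U_{H^\sigma_c}+\norm u_{H^\sigma_c}\bigr).
$$

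The key step is to close this estimate in terms of $U$ alone. That requires the inverse bound
$$
\norm u_{H^\sigma_c}\le C_M\norm U_{H^\sigma_c}.
$$
We get it from $u=\mathcal L(U)=\ln\bigl((1+\rho)^2+\eta^2\bigr)$, where $U=\rho+i\eta$. The function $\mathcal L$ is smooth on the range of $(\rho,\eta)$, which stays away from $(-1,0)$ by the uniform lower bound above, and $\mathcal L(0)=0$. The vector-valued form of \eqref{eq:envelope-composition} then applies. Since $P_k$ commutes with taking real and imaginary parts up to conjugation symmetry, $\norm{\rho}_{H^\sigma_c}$ and $\norm{\eta}_{H^\sigma_c}$ are both bounded by $\norm U_{H^\sigma_c}$.

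With this in hand, set $y_k(t)=2^{\sigma k}\norm{P_kU(t)}_{L^2}/c_k$ and $Y(t)=\sup_k y_k(t)$. Dividing the differential inequality by $2^{\sigma k}\norm{P_kU}_{L^2}$ gives, in integrated form,
$$
y_k(t)\le y_k(0)+C_M\int_0^{|t|}Y\,\dd\tau .
$$
This is justified because $u$ is smooth, and we avoid division by zero by working with $\sqrt{\norm{P_kU}^2+\epsilon}$. Taking the supremum in $k$ gives $Y(t)\le Y(0)+C_M\int_0^{|t|}Y$. Gronwall's inequality then yields
$$
Y(t)\le C_Me^{C_M|I|},
$$
which is \eqref{eq:propagated-envelope-U}. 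Before applying Gronwall, we must check that $Y$ is finite. This holds because smooth $U$ lies in $H^{\sigma+1}$, which gives $y_k\lesssim 2^{-k}2^{\delta k}$ times a finite constant, using $c_k\gtrsim 2^{-\delta k}c_0$.

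Finally, \eqref{eq:propagated-envelope-u} follows from the inverse composition bound, which gives $\norm u_{H^\sigma_c}\le C_M\norm U_{H^\sigma_c}$ at each time.

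The main obstacle is the envelope-level Lipschitz and composition bound for $\mathcal L$ and for $a=e^u-1$. This needs \eqref{eq:envelope-composition} in a real two-variable form with constants depending only on $M$ and the lower bound $c_*$. If the paralinearization route proves delicate there, we would fall back on the power-series and tame-product approach of Lemma \ref{lem:exp}, rerun with \eqref{eq:envelope-product}.
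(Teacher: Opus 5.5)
Your proposal is correct and follows essentially the same route as the paper: a dyadic $L^2$ energy estimate for $P_kU$ in \eqref{eq:U}, with the transport commutator handled by \eqref{eq:envelope-dyadic-commutator}, the source $(1+U)R$ by Lemma \ref{lem:envelope-hardy} and \eqref{eq:envelope-product}, closure through the composition bounds $\norm{u}_{H^\sigma_c}+\norm{a}_{H^\sigma_c}\lesssim_M\norm{U}_{H^\sigma_c}$ (including the inverse map $\mathcal L$), and Gronwall on the envelope ratio. Your extra bookkeeping (finiteness of $Y$ via $c_k\ge 2^{-\delta k}c_0$, and the $\epsilon$-regularization before dividing by $\norm{P_kU}_{L^2}$) supplies details the paper leaves implicit; note, however, that your power-series fallback would not cover $\mathcal L$, since the logarithm is not entire, so for the inverse bound you genuinely rely on \eqref{eq:envelope-composition}.
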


\begin{proof}
Since
$$
\norm{u}_{L_t^\infty H_x^s}\le M,
$$
Sobolev embedding gives
$$
\norm u_{L^\infty_{t,x}}
+
\norm{u_x}_{L^\infty_{t,x}}
\le C_M,
$$
and
$$
0<c_M
\le
|1+U|
=
e^{u/2}
\le C_M.
$$
By Lemma \ref{lem:envelope-calculus},
$$
\norm u_{H_c^\sigma}
+
\norm a_{H_c^\sigma}
\lesssim
\norm U_{H_c^\sigma},
$$
while at $t=0$,
$$
\norm{U(0)}_{H_c^\sigma}
\lesssim
\norm{u(0)}_{H_c^\sigma}
\lesssim_M1.
$$

Set
$$
b_k(t)
=
2^{\sigma k}\norm{P_kU(t)}_{L^2}
$$
and, for $t\ge0$,
$$
X(t)
:=
\sup_{0\le\tau\le t}
\sup_{k\ge0}
\frac{b_k(\tau)}{c_k}.
$$
Applying $P_k$ to \eqref{eq:U},
$$
\partial_tU_k-i\partial_x^2U_k
+\gamma a\partial_xU_k
=
-\gamma[P_k,a]U_x
-\gamma P_k((1+U)R).
$$
Since $a$ is real,
$$
\Rea\ip{a\partial_xU_k}{U_k}
=
-\frac12
\int_\R a_x|U_k|^2\dd x.
$$
Therefore
$$
\frac{\dd}{\dd t}\norm{U_k}_{L^2}
\lesssim_M
\norm{U_k}_{L^2}
+
\norm{[P_k,a]U_x}_{L^2}
+
\norm{P_k((1+U)R)}_{L^2}.
$$

By Lemma \ref{lem:envelope-calculus},
$$
2^{\sigma k}
\norm{[P_k,a]U_x}_{L^2}
\lesssim
X(t)c_k.
$$
Moreover, Lemma \ref{lem:envelope-hardy} gives
$$
\norm R_{H_c^\sigma}
\lesssim
\norm a_{H_c^\sigma}\norm u_{H^s}
\lesssim
X(t),
$$
Moreover,
$$
P_k((1+U)R)=P_kR+P_k(UR).
$$
Hence, by Lemma \ref{lem:envelope-hardy} and the product estimate,
$$
2^{\sigma k}\norm{P_k((1+U)R)}_{L^2}
\lesssim X(t)c_k.
$$
Thus
$$
\frac{\dd}{\dd t}b_k(t)
\le
C_Mb_k(t)
+
C_MX(t)c_k.
$$
Using the initial bound,
$$
\frac{b_k(t)}{c_k}
\le
C_M
+
C_M\int_0^tX(\tau)\dd\tau.
$$
Taking the supremum gives
$$
X(t)
\le
C_M
+
C_M\int_0^tX(\tau)\dd\tau.
$$
Gronwall's inequality proves \eqref{eq:propagated-envelope-U} for positive times. The negative-time case is identical.

Finally, applying the composition estimate to
$$
u=2\ln|1+U|
$$
gives \eqref{eq:propagated-envelope-u}.
\end{proof}

\begin{remark}

The same bounds hold for the solutions constructed in Section \ref{sec:existence}. Indeed, let
$$
u_{0,N}=P_{\le N}u_0.
$$
Then $u_{0,N}\to u_0$ in $H^s$, the sequence is uniformly bounded in $H^s$, and the same envelope $c$ controls $u_{0,N}$ up to a fixed constant depending only on the Littlewood--Paley decomposition. Since $u_{0,N}$ is smooth, the smooth-data construction gives a smooth solution $u_N$ on the common interval $[-T,T]$, and the uniform $H^s$ estimate gives
$$
\sup_N\sup_{|t|\le T}
\norm{u_N(t)}_{H^s}
<\infty.
$$
For $U_N:=e^{P_+u_N}-1,$ the Proposition \ref{prop:propagate-envelope} gives
$$
\sup_N\sup_{|t|\le T}
2^{\sigma k}
\norm{P_kU_N(t)}_{L^2}
\le
Cc_k.
$$
Since $\|U_N(0)- U(0)\|_{L^2} \to 0$, Lemma \ref{lem:rough-stability} yields
$$
U_N\to U
\quad\text{in }C([-T,T];L^2).
$$
Thus, for each fixed $k$,
$$
P_kU_N\to P_kU
\quad\text{in }C([-T,T];L^2),
$$
and passing to the limit gives
$$
\sup_{|t|\le T}
2^{\sigma k}
\norm{P_kU(t)}_{L^2}
\le
Cc_k.
$$
Applying the composition estimate to $u=2\ln|1+U|$ gives the corresponding bound for $u$.
\end{remark}

\subsection{Proof of continuous dependence}

Let $(u_{0,n})_{n\ge1}\subset H^s$ such that
$$
\norm{u_{0,n}-u_0}_{H^s}\to0,
$$
and let $u_n$ and $u$ be the corresponding solutions of \eqref{eq:embo} on a common interval $[-T,T]$. Set $U=e^{P_+u}-1$ and 
$$
U_n=e^{P_+u_n}-1.
$$
By \eqref{eq:L2stab}, we have
\begin{equation}\label{eq:Un-L2-convergence}
\sup_{|t|\le T}
\norm{U_n(t)-U(t)}_{L^2}
\to0.
\end{equation}

Denote $c^{(n)}=c(u_{0,n};s)$ and $c=c(u_0;s).$ By Lemma \ref{lem:canonical-envelope},
$$
\|c^{(n)}-c\|_{\ell^2}\to0,
$$
and therefore, by \eqref{eq:uniform-envelope-tail},
$$
\lim_{K\to\infty}
\sup_n
\sum_{k>K}(c_k^{(n)})^2
=
0.
$$

The propagation estimate, together with the approximation argument above, gives
$$
\sup_n\sup_{|t|\le T}
\norm{P_{>K}U_n(t)}_{H^s}^2
\lesssim
\sup_n
\sum_{k>K}(c_k^{(n)})^2,
$$
and
$$
\sup_{|t|\le T}
\norm{P_{>K}U(t)}_{H^s}^2
\lesssim
\sum_{k>K}c_k^2.
$$
Hence both high-frequency tails tend to zero uniformly as $K\to\infty$.

For the low frequencies,
$$
\norm{P_{\le K}(U_n-U)}_{H^s}
\lesssim
2^{sK}\norm{U_n-U}_{L^2}.
$$
Therefore
$$
\begin{aligned}
\norm{U_n-U}_{H^s}
&\le
\norm{P_{\le K}(U_n-U)}_{H^s}+
\norm{P_{>K}U_n}_{H^s}
+
\norm{P_{>K}U}_{H^s}.
\end{aligned}
$$
First choose $K$ large so that the last two terms are uniformly small, and then use \eqref{eq:Un-L2-convergence}. Thus
$$
\sup_{|t|\le T}
\norm{U_n(t)-U(t)}_{H^s}
\to0.
$$
Since $|1+U_n|$ and $|1+U|$ are uniformly bounded away from zero, Lemma \ref{lem:inverse} yields
$$
\sup_{|t|\le T}
\norm{u_n(t)-u(t)}_{H^s}
\to0.
$$
This proves continuous dependence.

The same decomposition gives strong continuity in time. Indeed,
$$
\sup_{|t|\le T}
\norm{P_{>K}U(t)}_{H^s}
\to0
$$
as $K\to\infty$, while for fixed $K$,
$$
\norm{P_{\le K}(U(t)-U(t_0))}_{H^s}
\lesssim
2^{sK}
\norm{U(t)-U(t_0)}_{L^2}
\to0
$$
as $t\to t_0$. Hence
$$
U\in C([-T,T];H^s),
$$
and Lemma \ref{lem:inverse} gives
$$
u\in C([-T,T];H^s).
$$
Finally, using the equation and the continuity of the nonlinear terms from $H^s$ to $H^{s-2}$,
$$
u\in C^1([-T,T];H^{s-2}).
$$
\qed

\subsection*{Persistence of higher regularity}

Let $\sigma>s>3/2$ and suppose $u_0\in H^\sigma$. Let $c=c(u_0;\sigma)$ be the canonical $H^\sigma$ envelope. Applying Proposition \ref{prop:propagate-envelope} at level $\sigma$ to smooth approximations and using the approximation argument above, we obtain
\begin{equation}\label{eq:higher-envelope}
\sup_{|t|\le T}
2^{\sigma k}
\norm{P_ku(t)}_{L^2}
\lesssim
c_k.
\end{equation}
Squaring and summing in $k$ give
$$
\sup_{|t|\le T}
\norm{u(t)}_{H^\sigma}
\lesssim
\norm c_{\ell^2}
\lesssim
\norm{u_0}_{H^\sigma}.
$$
Thus the higher regularity persists on the same lifespan determined by the lower $H^s$ norm.

Strong continuity in $H^\sigma$ follows from the same low--high decomposition. Indeed,
$$
\sup_{|t|\le T}
\norm{P_{>K}u(t)}_{H^\sigma}^2
\lesssim
\sum_{k>K}c_k^2
\to0.
$$
For fixed $K$, the already established continuity $u\in C([-T,T];H^s)$ gives
$$
\norm{P_{\le K}(u(t)-u(t_0))}_{H^\sigma}
\lesssim
2^{(\sigma-s)K}
\norm{u(t)-u(t_0)}_{H^s}
\to0
$$
as $t\to t_0$. Therefore $u\in C([-T,T];H^\sigma).$
\qed

\section{Proof of Theorem \ref{corolario s>sc}} \label{rem:gamma0}
If $\gamma=0$, equation \eqref{eq:U} reduces to the free Schr\"odinger equation
$$
U_t-iU_{xx}=0.
$$
Given $s>1/2$ and $u_0\in H^s$, we have $U_0:=e^{P_+u_0}-1\in H^s$ and
$$
|1+U_0|\ge e^{-\|P_+u_0\|_{L^\infty}}=:\varepsilon>0.
$$

Since $U(t):=e^{it\partial_x^2}U_0 \in C(\R, H^s)$ satisfies
$$
\lim_{t\to0}\|U(t)- U_0\|_{H^s}=0
$$
and $H^s\hookrightarrow L^\infty$, there exists $T>0$ such that
$$
\|U(t)-U_0\|_{L^\infty}<\frac{\varepsilon}{2}.
$$
for $t\in[-T,T]$. Therefore
$$
|1+U(t)|\ge |1+U_0| - |U(t)-U_0| \ge \frac{\varepsilon}{2},
$$
and the inverse $u(t)=2\ln|1+U(t)|$ is well defined for $t\in[-T,T]$. By the standard composition estimates in $H^s$, it follows that
$$
u\in C([-T,T];H^s).
$$
 Uniqueness and continuous dependence follow directly from the local Lipschitz continuity of the exponential and logarithmic maps in $H^s$ and the unitarity of the free Schr\"odinger group $\{e^{it\partial_x^2}\}_{t\in \R}$. Thus \eqref{eq:embo} is locally well-posed for every $s>1/2=s_c$,  proving Theorem \ref{corolario s>sc}.

 If $\gamma=0$, equation \eqref{eq:U} reduces to the free Schr\"odinger equation
$$
U_t-iU_{xx}=0.
$$
Given $s>1/2$ and $u_0\in H^s$, we have $U_0:=e^{P_+u_0}-1\in H^s$ and
$$
|1+U_0|\ge e^{-\|P_+u_0\|_{L^\infty}}=:\varepsilon>0.
$$
Since $U(t):=e^{it\partial_x^2}U_0\in C(\R;H^s)$ and $H^s\hookrightarrow L^\infty$, there exists $T>0$ such that
$$
\|U(t)-U_0\|_{L^\infty}<\frac{\varepsilon}{2},\qquad |t|\le T.
$$
Hence $|1+U(t)|\ge\varepsilon/2$. Moreover, $U_0$ has positive Fourier support, which is preserved by the Schr\"odinger flow. After decreasing $T$ if necessary,
$$
\big\|e^{-P_+u_0}(U(t)-U_0)\big\|_{L^\infty}<\frac12.
$$
Thus we may define
$$
Q(t):=P_+u_0+\log\!\left(1+e^{-P_+u_0}(U(t)-U_0)\right).
$$
By the power-series expansion of the logarithm, $Q$ has positive Fourier support and $e^Q=1+U$. Hence $u:=2\Rea Q=2\ln|1+U|$ satisfies $P_+u=Q$, and reversing the computation leading to \eqref{eq:U} shows that $u$ solves \eqref{eq:embo}. By the standard composition estimates in $H^s$, it follows that
$$
u\in C([-T,T];H^s).
$$
 Uniqueness and continuous dependence follow directly from the local Lipschitz continuity of the exponential and logarithmic maps in $H^s$ and the unitarity of the free Schr\"odinger group $\{e^{it\partial_x^2}\}_{t\in \R}$. Thus \eqref{eq:embo} is locally well-posed for every $s>1/2=s_c$,  proving Theorem \ref{corolario s>sc}.

\appendix

\section{Failure of boundedness of the quadratic Duhamel operator}\label{sec:c2}

We now prove Theorem \ref{thm:c2}. The argument is a high--low second-iterate test, as in the ill-posedness mechanisms for the Benjamin--Ono equation; see \cite{MolinetSautTzvetkov2001,KochTzvetkov2005}.

Recall the operator $\mathcal B(t)$ defined in \eqref{operator B}. It is enough to construct a real sequence $\varphi_N$ such that
$$
\norm{\varphi_N}_{H^s}\sim1,
$$
while
$$
\norm{\mathcal B(t)(\varphi_N,\varphi_N)}_{H^s}\to\infty.
$$
Fix $0<\delta<1$ and set $\eta=N^{-1-\delta}.$ Define
$$
I_\ell=[\eta/2,\eta],
\quad
I_h=[-N,-N+\eta],
$$
and let
$$
\widehat{\varphi_N}(\xi)
=
\eta^{-1/2}\mathbf 1_{I_\ell}(\xi)
+
N^{-s}\eta^{-1/2}\mathbf 1_{I_h}(\xi).
$$
 Extend by
$$
\widehat{\varphi_N}(-\xi)=\overline{\widehat{\varphi_N}(\xi)}
$$
so that $\varphi_N$ is real-valued. Since the four blocks are disjoint $\norm{\varphi_N}_{H^s}\sim1$ uniformly in $N$.

We consider the high--low interaction
$$
I_h+I_\ell
\subset
[-N+\eta/2,-N+2\eta].
$$
Since the two input frequencies have opposite signs, Proposition \ref{prop:null} implies that the quadratic symbol of $u_x\Hh u_x$ vanishes on this interaction. Hence only the term $\gamma uu_x$ contributes.

For $\xi_1\in I_\ell$ and $\xi_2\in I_h$, one has  $\xi_2<0<\xi_1$, and $\xi_1+\xi_2<0$. Therefore
$$
\Omega(\xi_1,\xi_2)
:=
\xi_1|\xi_1|+\xi_2|\xi_2|
-(\xi_1+\xi_2)|\xi_1+\xi_2|
=
2\xi_1(\xi_1+\xi_2),
$$
and consequently
$$
|\Omega(\xi_1,\xi_2)|
\sim N\eta
=
N^{-\delta}.
$$
Thus, for every fixed $t\ne0$,
$$
\frac{e^{-it\Omega}-1}{\Omega}
\to -it
$$
uniformly on this interaction.

On a subinterval of $I_h+I_\ell$ of length comparable to $\eta$, the convolution of the two blocks satisfies
$$
\left|
\int
\widehat{\varphi_N}(\xi_1)
\widehat{\varphi_N}(\xi-\xi_1)
\,d\xi_1
\right|
\gtrsim
\eta\,
\eta^{-1/2}
N^{-s}\eta^{-1/2}
=
N^{-s}.
$$
Moreover, the transport multiplier satisfies
$$
|\gamma(\xi_1+\xi_2)|
\sim |\gamma|N.
$$
Hence, on an interval of length comparable to $\eta$,
$$
\left|
\mathcal{F}\left[\mathcal B(t)(\varphi_N,\varphi_N)\right](\xi)
\right|
\gtrsim_t
N^{1-s}.
$$
Since $|\xi|\sim N$ on this interval,
$$
\norm{\mathcal B(t)(\varphi_N,\varphi_N)}_{H^s}
\gtrsim_t
N^s N^{1-s}\eta^{1/2}
=
N\eta^{1/2}
=
N^{(1-\delta)/2}.
$$
Therefore
\begin{equation}\label{eq:second-blowup}
\norm{\mathcal B(t)(\varphi_N,\varphi_N)}_{H^s}
\gtrsim_t
N^{(1-\delta)/2}
\to\infty.
\end{equation}

Finally, if the Duhamel map were $C^2$ at the origin as a map into $C([-T,T];H^s)$, then differentiating the Duhamel formula at zero would give
$$
D^2\Phi(0)[\varphi,\psi](t)
=
-2\mathcal B(t)(\varphi,\psi).
$$
The second derivative would therefore define a bounded bilinear map on $H^s\times H^s$, contradicting \eqref{eq:second-blowup}. This completes the proof.

\begin{remark}
When $\gamma=0$, the opposite-sign interaction disappears because the quadratic symbol of $u_x\Hh u_x$ vanishes identically in that region. In this case the equation is considerably simpler and, as observed in the Section \ref{rem:gamma0}, can be explicitly linearized through the Cole--Hopf transform.
\end{remark}

\bibliographystyle{amsplain}
\bibliography{references}

@article {Calderon1965,
    AUTHOR = {Calder\'{o}n, A.-P.},
     TITLE = {Commutators of singular integral operators},
   JOURNAL = {Proc. Nat. Acad. Sci. U.S.A.},
  FJOURNAL = {Proceedings of the National Academy of Sciences of the United
              States of America},
    VOLUME = {53},
      YEAR = {1965},
     PAGES = {1092--1099},
      ISSN = {0027-8424},
   MRCLASS = {47.70},
  MRNUMBER = {177312},
MRREVIEWER = {R.\ T.\ Seeley},
       DOI = {10.1073/pnas.53.5.1092},
       URL = {https://doi.org/10.1073/pnas.53.5.1092},
}

@article{Guo2024,
  AUTHOR = {Guo, Ning},
  TITLE = {Integrability and inverse scattering transform of the modified {Benjamin--Ono} equation},
  JOURNAL = {Phys. Scr.},
  VOLUME = {99},
  YEAR = {2024},
  NUMBER = {11},
  PAGES = {Paper No. 115208},
  DOI = {10.1088/1402-4896/ad7e55}
}

@article {IonescuKenig2007,
    AUTHOR = {Ionescu, Alexandru D. and Kenig, Carlos E.},
     TITLE = {Global well-posedness of the {B}enjamin-{O}no equation in
              low-regularity spaces},
   JOURNAL = {J. Amer. Math. Soc.},
  FJOURNAL = {Journal of the American Mathematical Society},
    VOLUME = {20},
      YEAR = {2007},
    NUMBER = {3},
     PAGES = {753--798},
      ISSN = {0894-0347,1088-6834},
   MRCLASS = {35Q53 (35B30 76B55 76B70)},
  MRNUMBER = {2291918},
MRREVIEWER = {Grzegorz\ Karch},
       DOI = {10.1090/S0894-0347-06-00551-0},
       URL = {https://doi.org/10.1090/S0894-0347-06-00551-0},
}

@article {KenigTakaoka2006,
    AUTHOR = {Kenig, Carlos E. and Takaoka, Hideo},
     TITLE = {Global wellposedness of the modified {B}enjamin-{O}no equation
              with initial data in {$H^{1/2}$}},
   JOURNAL = {Int. Math. Res. Not.},
  FJOURNAL = {International Mathematics Research Notices},
      YEAR = {2006},
     PAGES = {Art. ID 95702, 44},
      ISSN = {1073-7928,1687-0247},
   MRCLASS = {35Q53 (35B30 76B03 76B15)},
  MRNUMBER = {2219229},
MRREVIEWER = {Mariana\ H\u{a}r\u{a}gu\c{s}},
       DOI = {10.1155/IMRN/2006/95702},
       URL = {https://doi.org/10.1155/IMRN/2006/95702},
}

@article {KillipLaurensVisan2024,
    AUTHOR = {Killip, Rowan and Laurens, Thierry and Vi\c{s}an, Monica},
     TITLE = {Sharp well-posedness for the {B}enjamin-{O}no equation},
   JOURNAL = {Invent. Math.},
  FJOURNAL = {Inventiones Mathematicae},
    VOLUME = {236},
      YEAR = {2024},
    NUMBER = {3},
     PAGES = {999--1054},
      ISSN = {0020-9910,1432-1297},
   MRCLASS = {35C08},
  MRNUMBER = {4743514},
MRREVIEWER = {Francisco\ J.\ Vielma Leal},
       DOI = {10.1007/s00222-024-01250-8},
       URL = {https://doi.org/10.1007/s00222-024-01250-8},
}

@article {KochTzvetkov2005,
    AUTHOR = {Koch, H. and Tzvetkov, N.},
     TITLE = {Nonlinear wave interactions for the {B}enjamin-{O}no equation},
   JOURNAL = {Int. Math. Res. Not.},
  FJOURNAL = {International Mathematics Research Notices},
      YEAR = {2005},
    NUMBER = {30},
     PAGES = {1833--1847},
      ISSN = {1073-7928,1687-0247},
   MRCLASS = {35Q53 (35K15 35K55 76B55)},
  MRNUMBER = {2172940},
MRREVIEWER = {E.\ I.\ Galakhov},
       DOI = {10.1155/IMRN.2005.1833},
       URL = {https://doi.org/10.1155/IMRN.2005.1833},
}

@article {Miura1968,
    AUTHOR = {Miura, Robert M.},
     TITLE = {Korteweg-de {V}ries equation and generalizations. {I}. {A}
              remarkable explicit nonlinear transformation},
   JOURNAL = {J. Mathematical Phys.},
  FJOURNAL = {Journal of Mathematical Physics},
    VOLUME = {9},
      YEAR = {1968},
     PAGES = {1202--1204},
      ISSN = {0022-2488,1089-7658},
   MRCLASS = {35.36 (82.00)},
  MRNUMBER = {252825},
       DOI = {10.1063/1.1664700},
       URL = {https://doi.org/10.1063/1.1664700},
}

@article {MolinetRibaud2004,
    AUTHOR = {Molinet, Luc and Ribaud, Francis},
     TITLE = {Well-posedness results for the generalized {B}enjamin-{O}no
              equation with small initial data},
   JOURNAL = {J. Math. Pures Appl. (9)},
  FJOURNAL = {Journal de Math\'{e}matiques Pures et Appliqu\'{e}es.
              Neuvi\`eme S\'{e}rie},
    VOLUME = {83},
      YEAR = {2004},
    NUMBER = {2},
     PAGES = {277--311},
      ISSN = {0021-7824},
   MRCLASS = {35Q55 (35B30 76B03 76B55)},
  MRNUMBER = {2038121},
MRREVIEWER = {J\"{u}rgen\ Scheurle},
       DOI = {10.1016/j.matpur.2003.11.005},
       URL = {https://doi.org/10.1016/j.matpur.2003.11.005},
}

@article {MolinetSautTzvetkov2001,
    AUTHOR = {Molinet, L. and Saut, J. C. and Tzvetkov, N.},
     TITLE = {Ill-posedness issues for the {B}enjamin-{O}no and related
              equations},
   JOURNAL = {SIAM J. Math. Anal.},
  FJOURNAL = {SIAM Journal on Mathematical Analysis},
    VOLUME = {33},
      YEAR = {2001},
    NUMBER = {4},
     PAGES = {982--988},
      ISSN = {0036-1410,1095-7154},
   MRCLASS = {35Q53 (35B30)},
  MRNUMBER = {1885293},
       DOI = {10.1137/S0036141001385307},
       URL = {https://doi.org/10.1137/S0036141001385307},
}

@article {Oliver2018,
    AUTHOR = {Oliver, Marcel},
     TITLE = {An estimate on the {B}edrosian commutator in {S}obolev space},
   JOURNAL = {J. Inequal. Appl.},
  FJOURNAL = {Journal of Inequalities and Applications},
      YEAR = {2018},
     PAGES = {Paper No. 348, 4},
      ISSN = {1029-242X},
   MRCLASS = {42A85 (26D10 46E35)},
  MRNUMBER = {3892484},
MRREVIEWER = {Liparit\ Tepoyan},
       DOI = {10.1186/s13660-018-1940-3},
       URL = {https://doi.org/10.1186/s13660-018-1940-3},
}

@article {ScoufisCosgrove1995,
    AUTHOR = {Scoufis, George and Cosgrove, Christopher M.},
     TITLE = {On the initial value problem for the modified {B}enjamin-{O}no
              equation},
   JOURNAL = {J. Math. Phys.},
  FJOURNAL = {Journal of Mathematical Physics},
    VOLUME = {36},
      YEAR = {1995},
    NUMBER = {10},
     PAGES = {5753--5759},
      ISSN = {0022-2488,1089-7658},
   MRCLASS = {35Q53},
  MRNUMBER = {1353114},
       DOI = {10.1063/1.531284},
       URL = {https://doi.org/10.1063/1.531284},
}

@article {Tao2004,
    AUTHOR = {Tao, Terence},
     TITLE = {Global well-posedness of the {B}enjamin-{O}no equation in
              {$H^1({\bf R})$}},
   JOURNAL = {J. Hyperbolic Differ. Equ.},
  FJOURNAL = {Journal of Hyperbolic Differential Equations},
    VOLUME = {1},
      YEAR = {2004},
    NUMBER = {1},
     PAGES = {27--49},
      ISSN = {0219-8916,1793-6993},
   MRCLASS = {35Q53 (35B30 76B03)},
  MRNUMBER = {2052470},
MRREVIEWER = {Grzegorz\ Karch},
       DOI = {10.1142/S0219891604000032},
       URL = {https://doi.org/10.1142/S0219891604000032},
}

@article {TaoWaveMaps2001,
    AUTHOR = {Tao, Terence},
     TITLE = {Global regularity of wave maps. {II}. {S}mall energy in two
              dimensions},
   JOURNAL = {Comm. Math. Phys.},
  FJOURNAL = {Communications in Mathematical Physics},
    VOLUME = {224},
      YEAR = {2001},
    NUMBER = {2},
     PAGES = {443--544},
      ISSN = {0010-3616,1432-0916},
   MRCLASS = {58J45 (35B60 35B65 35L15 58J47)},
  MRNUMBER = {1869874},
MRREVIEWER = {Joachim\ Krieger},
       DOI = {10.1007/PL00005588},
       URL = {https://doi.org/10.1007/PL00005588},
}

@article {IfrimTataru2023,
    AUTHOR = {Ifrim, Mihaela and Tataru, Daniel},
     TITLE = {Local well-posedness for quasi-linear problems: a primer},
   JOURNAL = {Bull. Amer. Math. Soc. (N.S.)},
  FJOURNAL = {American Mathematical Society. Bulletin. New Series},
    VOLUME = {60},
      YEAR = {2023},
    NUMBER = {2},
     PAGES = {167--194},
      ISSN = {0273-0979,1088-9485},
   MRCLASS = {35L45 (35L50 35L60)},
  MRNUMBER = {4557379},
       DOI = {10.1090/bull/1775},
       URL = {https://doi.org/10.1090/bull/1775},
}

@article {Iorio1986,
    AUTHOR = {I\'{o}rio, Jr., Rafael Jos\'{e}},
     TITLE = {On the {C}auchy problem for the {B}enjamin-{O}no equation},
   JOURNAL = {Comm. Partial Differential Equations},
  FJOURNAL = {Communications in Partial Differential Equations},
    VOLUME = {11},
      YEAR = {1986},
    NUMBER = {10},
     PAGES = {1031--1081},
      ISSN = {0360-5302,1532-4133},
   MRCLASS = {35B40 (35K55 35Q20)},
  MRNUMBER = {847994},
MRREVIEWER = {Bjorn\ Birnir},
       DOI = {10.1080/03605308608820456},
       URL = {https://doi.org/10.1080/03605308608820456},
}

@article {Iorio1991,
    AUTHOR = {I\'{o}rio, Jr., Rafael Jos\'{e}},
     TITLE = {The {B}enjamin-{O}no equation in weighted {S}obolev spaces},
   JOURNAL = {J. Math. Anal. Appl.},
  FJOURNAL = {Journal of Mathematical Analysis and Applications},
    VOLUME = {157},
      YEAR = {1991},
    NUMBER = {2},
     PAGES = {577--590},
      ISSN = {0022-247X,1096-0813},
   MRCLASS = {35Q53},
  MRNUMBER = {1112336},
       DOI = {10.1016/0022-247X(91)90108-C},
       URL = {https://doi.org/10.1016/0022-247X(91)90108-C},
}

@article {Simon1987,
    AUTHOR = {Simon, Jacques},
     TITLE = {Compact sets in the space {$L^p(0,T;B)$}},
   JOURNAL = {Ann. Mat. Pura Appl. (4)},
  FJOURNAL = {Annali di Matematica Pura ed Applicata. Serie Quarta},
    VOLUME = {146},
      YEAR = {1987},
     PAGES = {65--96},
      ISSN = {0003-4622},
   MRCLASS = {46E40 (46E30)},
  MRNUMBER = {916688},
MRREVIEWER = {James\ Bell\ Cooper},
       DOI = {10.1007/BF01762360},
       URL = {https://doi.org/10.1007/BF01762360},
}

@book {Taylor1991,
    AUTHOR = {Taylor, Michael E.},
     TITLE = {Pseudodifferential operators and nonlinear {PDE}},
    SERIES = {Progress in Mathematics},
    VOLUME = {100},
 PUBLISHER = {Birkh\"{a}user Boston, Inc., Boston, MA},
      YEAR = {1991},
     PAGES = {213},
      ISBN = {0-8176-3595-5},
   MRCLASS = {35Sxx (35G20 46E35 46F99 47G30 58G15)},
  MRNUMBER = {1121019},
MRREVIEWER = {William\ Margulies},
       DOI = {10.1007/978-1-4612-0431-2},
       URL = {https://doi.org/10.1007/978-1-4612-0431-2},
}

@book {BahouriCheminDanchin2011,
    AUTHOR = {Bahouri, Hajer and Chemin, Jean-Yves and Danchin, Rapha\"{e}l},
     TITLE = {Fourier analysis and nonlinear partial differential equations},
    SERIES = {Grundlehren der mathematischen Wissenschaften [Fundamental
              Principles of Mathematical Sciences]},
    VOLUME = {343},
 PUBLISHER = {Springer, Heidelberg},
      YEAR = {2011},
     PAGES = {xvi+523},
      ISBN = {978-3-642-16829-1},
   MRCLASS = {35-02 (35L72 35Q30 42-02 42B37 76B03 76D03 76N10)},
  MRNUMBER = {2768550},
MRREVIEWER = {Peter\ R.\ Massopust},
       DOI = {10.1007/978-3-642-16830-7},
       URL = {https://doi.org/10.1007/978-3-642-16830-7},
}

@article{MR3397003,
  AUTHOR = {Molinet, Luc and Vento, St\'{e}phane},
  TITLE = {Improvement of the energy method for strongly nonresonant dispersive equations and applications},
  JOURNAL = {Anal. PDE},
  VOLUME = {8},
  YEAR = {2015},
  NUMBER = {6},
  PAGES = {1455--1495},
  MRNUMBER = {3397003},
  DOI = {10.2140/apde.2015.8.1455}
}

@article{MR3906854,
  AUTHOR = {Molinet, Luc and Pilod, Didier and Vento, St\'{e}phane},
  TITLE = {On well-posedness for some dispersive perturbations of {B}urgers' equation},
  JOURNAL = {Ann. Inst. H. Poincar\'e C Anal. Non Lin\'eaire},
  VOLUME = {35},
  YEAR = {2018},
  NUMBER = {7},
  PAGES = {1719--1756},
  MRNUMBER = {3906854},
  DOI = {10.1016/j.anihpc.2017.12.004}
}

@misc{MolinetPilodVento2025,
  AUTHOR = {Molinet, Luc and Pilod, Didier and Vento, St{\'e}phane},
  TITLE = {Low regularity well-posedness of nonlocal dispersive perturbations of {B}urgers' equation},
  YEAR = {2025},
  NOTE = {Preprint, arXiv:2506.17801}
}

@article{MR5027534,
  AUTHOR = {Freire, Ricardo and Santos, Thyago S. R.},
  TITLE = {The {C}auchy problem for nonlinear dispersive models of long internal waves in the presence of the {C}oriolis force},
  JOURNAL = {Nonlinear Anal. Real World Appl.},
  VOLUME = {92},
  YEAR = {2026},
  PAGES = {Paper No. 104620, 25 pp.},
  MRNUMBER = {5027534},
  DOI = {10.1016/j.nonrwa.2026.104620}
}

@incollection {Kato1983,
    AUTHOR = {Kato, Tosio},
     TITLE = {On the {C}auchy problem for the (generalized) {K}orteweg-de
              {V}ries equation},
 BOOKTITLE = {Studies in applied mathematics},
    SERIES = {Adv. Math. Suppl. Stud.},
    VOLUME = {8},
     PAGES = {93--128},
 PUBLISHER = {Academic Press, New York},
      YEAR = {1983},
      ISBN = {0-12-305480-X},
   MRCLASS = {35Q20},
  MRNUMBER = {759907},
MRREVIEWER = {Amy\ Cohen},
}

@article {KenigPonceVega1993,
    AUTHOR = {Kenig, Carlos E. and Ponce, Gustavo and Vega, Luis},
     TITLE = {Well-posedness and scattering results for the generalized
              {K}orteweg-de {V}ries equation via the contraction principle},
   JOURNAL = {Comm. Pure Appl. Math.},
  FJOURNAL = {Communications on Pure and Applied Mathematics},
    VOLUME = {46},
      YEAR = {1993},
    NUMBER = {4},
     PAGES = {527--620},
      ISSN = {0010-3640,1097-0312},
   MRCLASS = {35Q53 (47H15 47N20)},
  MRNUMBER = {1211741},
MRREVIEWER = {Benno\ Fuchssteiner},
       DOI = {10.1002/cpa.3160460405},
       URL = {https://doi.org/10.1002/cpa.3160460405},
}

@article {Bourgain1993KdV,
    AUTHOR = {Bourgain, J.},
     TITLE = {Fourier transform restriction phenomena for certain lattice
              subsets and applications to nonlinear evolution equations.
              {II}. {T}he {K}d{V}-equation},
   JOURNAL = {Geom. Funct. Anal.},
  FJOURNAL = {Geometric and Functional Analysis},
    VOLUME = {3},
      YEAR = {1993},
    NUMBER = {3},
     PAGES = {209--262},
      ISSN = {1016-443X,1420-8970},
   MRCLASS = {35Q55},
  MRNUMBER = {1215780},
MRREVIEWER = {Yun\ Mei\ Chen},
       DOI = {10.1007/BF01895688},
       URL = {https://doi.org/10.1007/BF01895688},
}

@article {KenigPonceVega1996,
    AUTHOR = {Kenig, Carlos E. and Ponce, Gustavo and Vega, Luis},
     TITLE = {A bilinear estimate with applications to the {K}d{V} equation},
   JOURNAL = {J. Amer. Math. Soc.},
  FJOURNAL = {Journal of the American Mathematical Society},
    VOLUME = {9},
      YEAR = {1996},
    NUMBER = {2},
     PAGES = {573--603},
      ISSN = {0894-0347,1088-6834},
   MRCLASS = {35Q53 (35Bxx)},
  MRNUMBER = {1329387},
MRREVIEWER = {F.\ Pempinelli},
       DOI = {10.1090/S0894-0347-96-00200-7},
       URL = {https://doi.org/10.1090/S0894-0347-96-00200-7},
}

@article {KillipVisan2019,
    AUTHOR = {Killip, Rowan and Vi\c{s}an, Monica},
     TITLE = {Kd{V} is well-posed in {$H^{-1}$}},
   JOURNAL = {Ann. of Math. (2)},
  FJOURNAL = {Annals of Mathematics. Second Series},
    VOLUME = {190},
      YEAR = {2019},
    NUMBER = {1},
     PAGES = {249--305},
      ISSN = {0003-486X,1939-8980},
   MRCLASS = {35Q53 (35B30 37K10)},
  MRNUMBER = {3990604},
MRREVIEWER = {John\ Albert},
       DOI = {10.4007/annals.2019.190.1.4},
       URL = {https://doi.org/10.4007/annals.2019.190.1.4},
}

@article {MolinetPilod2012,
    AUTHOR = {Molinet, Luc and Pilod, Didier},
     TITLE = {The {C}auchy problem for the {B}enjamin-{O}no equation in
              {$L^2$} revisited},
   JOURNAL = {Anal. PDE},
  FJOURNAL = {Analysis \& PDE},
    VOLUME = {5},
      YEAR = {2012},
    NUMBER = {2},
     PAGES = {365--395},
      ISSN = {2157-5045,1948-206X},
   MRCLASS = {35Q53 (35B30)},
  MRNUMBER = {2970711},
MRREVIEWER = {John\ Albert},
       DOI = {10.2140/apde.2012.5.365},
       URL = {https://doi.org/10.2140/apde.2012.5.365},
}

@article {KenigPonceVega2001,
    AUTHOR = {Kenig, Carlos E. and Ponce, Gustavo and Vega, Luis},
     TITLE = {On the ill-posedness of some canonical dispersive equations},
   JOURNAL = {Duke Math. J.},
  FJOURNAL = {Duke Mathematical Journal},
    VOLUME = {106},
      YEAR = {2001},
    NUMBER = {3},
     PAGES = {617--633},
      ISSN = {0012-7094,1547-7398},
   MRCLASS = {35R25 (35B30 35B65 35Q51 35Q53)},
  MRNUMBER = {1813239},
MRREVIEWER = {Kenji\ Nakanishi},
       DOI = {10.1215/S0012-7094-01-10638-8},
       URL = {https://doi.org/10.1215/S0012-7094-01-10638-8},
}

@article {CollianderKeelStaffilaniTakaokaTao2003,
    AUTHOR = {Colliander, J. and Keel, M. and Staffilani, G. and Takaoka, H.
              and Tao, T.},
     TITLE = {Sharp global well-posedness for {K}d{V} and modified {K}d{V}
              on {$\mathbb{R}$} and {$\mathbb{T}$}},
   JOURNAL = {J. Amer. Math. Soc.},
  FJOURNAL = {Journal of the American Mathematical Society},
    VOLUME = {16},
      YEAR = {2003},
    NUMBER = {3},
     PAGES = {705--749},
      ISSN = {0894-0347,1088-6834},
   MRCLASS = {35Q53 (35B30 37K10)},
  MRNUMBER = {1969209},
MRREVIEWER = {Vladislav\ G.\ Dubrovsky},
       DOI = {10.1090/S0894-0347-03-00421-1},
       URL = {https://doi.org/10.1090/S0894-0347-03-00421-1},
}

@article {HarropGriffithsKillipVisan2024,
    AUTHOR = {Harrop-Griffiths, Benjamin and Killip, Rowan and Vi\c{s}an,
              Monica},
     TITLE = {Sharp well-posedness for the cubic {NLS} and m{K}d{V} in
              {$H^s(\mathbb{R})$}},
   JOURNAL = {Forum Math. Pi},
  FJOURNAL = {Forum of Mathematics. Pi},
    VOLUME = {12},
      YEAR = {2024},
     PAGES = {Paper No. e6, 86},
      ISSN = {2050-5086},
   MRCLASS = {35Q55 (35Q53 37K10)},
  MRNUMBER = {4726498},
MRREVIEWER = {Feng\ Wang},
       DOI = {10.1017/fmp.2024.4},
       URL = {https://doi.org/10.1017/fmp.2024.4},
}

@article {Guo2011mBO,
    AUTHOR = {Guo, Zihua},
     TITLE = {Local well-posedness and a priori bounds for the modified
              {B}enjamin-{O}no equation},
   JOURNAL = {Adv. Differential Equations},
  FJOURNAL = {Advances in Differential Equations},
    VOLUME = {16},
      YEAR = {2011},
    NUMBER = {11-12},
     PAGES = {1087--1137},
      ISSN = {1079-9389},
   MRCLASS = {35Q53 (35B30 35B45 76B15)},
  MRNUMBER = {2858525},
}

@book {MR4400881,
    AUTHOR = {Klein, Christian and Saut, Jean-Claude},
     TITLE = {Nonlinear dispersive equations---inverse scattering and {PDE}
              methods},
    SERIES = {Applied Mathematical Sciences},
    VOLUME = {209},
 PUBLISHER = {Springer, Cham},
      YEAR = {[2021] \copyright 2021},
     PAGES = {xx+580},
      ISBN = {978-3-030-91426-4; 978-3-030-91427-1},
   MRCLASS = {35-02 (35C08 35Q51 35Q53 35Q55 37K40)},
  MRNUMBER = {4400881},
       DOI = {10.1007/978-3-030-91427-1},
       URL = {https://doi.org/10.1007/978-3-030-91427-1},
}

@article {Nakamura1979,
    AUTHOR = {Nakamura, Akira},
     TITLE = {{$N$}-periodic wave and {$N$}-soliton solutions of the
              modified {B}enjamin-{O}no equation},
   JOURNAL = {J. Phys. Soc. Japan},
  FJOURNAL = {Journal of the Physical Society of Japan},
    VOLUME = {47},
      YEAR = {1979},
    NUMBER = {6},
     PAGES = {2045--2046},
      ISSN = {0031-9015,1347-4073},
   MRCLASS = {76B25},
  MRNUMBER = {559020},
MRREVIEWER = {Ryogo\ Hirota},
       DOI = {10.1143/JPSJ.47.2045},
       URL = {https://doi.org/10.1143/JPSJ.47.2045},
}

@book {LinaresPonce2015,
    AUTHOR = {Linares, Felipe and Ponce, Gustavo},
     TITLE = {Introduction to nonlinear dispersive equations},
    SERIES = {Universitext},
   EDITION = {Second},
 PUBLISHER = {Springer, New York},
      YEAR = {2015},
     PAGES = {xiv+301},
      ISBN = {978-1-4939-2180-5; 978-1-4939-2181-2},
   MRCLASS = {35-01 (35B30 35Q53 35Q55 47F05 47G30)},
  MRNUMBER = {3308874},
       DOI = {10.1007/978-1-4939-2181-2},
       URL = {https://doi.org/10.1007/978-1-4939-2181-2},
}

@book {MR2233925,
    AUTHOR = {Tao, Terence},
     TITLE = {Nonlinear dispersive equations},
    SERIES = {CBMS Regional Conference Series in Mathematics},
    VOLUME = {106},
      NOTE = {Local and global analysis},
 PUBLISHER = {Published for the Conference Board of the Mathematical
              Sciences, Washington, DC; by the American Mathematical
              Society, Providence, RI},
      YEAR = {2006},
     PAGES = {xvi+373},
      ISBN = {0-8218-4143-2},
   MRCLASS = {35Q53 (35B35 35P25 35Q55 37K10)},
  MRNUMBER = {2233925},
MRREVIEWER = {Sebastian\ Herr},
       DOI = {10.1090/cbms/106},
       URL = {https://doi.org/10.1090/cbms/106},
}

@article {Miura1976,
    AUTHOR = {Miura, Robert M.},
     TITLE = {The {K}orteweg-de {V}ries equation: a survey of results},
   JOURNAL = {SIAM Rev.},
  FJOURNAL = {SIAM Review. A Publication of the Society for Industrial and
              Applied Mathematics},
    VOLUME = {18},
      YEAR = {1976},
    NUMBER = {3},
     PAGES = {412--459},
      ISSN = {1095-7200},
   MRCLASS = {35Q99},
  MRNUMBER = {404890},
MRREVIEWER = {Ronald\ Di\ Perna},
       DOI = {10.1137/1018076},
       URL = {https://doi.org/10.1137/1018076},
}

@article {MiuraGardnerKruskal1968,
    AUTHOR = {Miura, Robert M. and Gardner, Clifford S. and Kruskal, Martin
              D.},
     TITLE = {Korteweg-de {V}ries equation and generalizations. {II}.
              {E}xistence of conservation laws and constants of motion},
   JOURNAL = {J. Mathematical Phys.},
  FJOURNAL = {Journal of Mathematical Physics},
    VOLUME = {9},
      YEAR = {1968},
     PAGES = {1204--1209},
      ISSN = {0022-2488,1089-7658},
   MRCLASS = {35.36 (82.00)},
  MRNUMBER = {252826},
MRREVIEWER = {Joel\ Smoller},
       DOI = {10.1063/1.1664701},
       URL = {https://doi.org/10.1063/1.1664701},
}

@article {MR4902804,
    AUTHOR = {Pineau, Ben and Taylor, Mitchell A.},
     TITLE = {Global well-posedness for the generalized derivative nonlinear
              {S}chr\"{o}dinger equation},
   JOURNAL = {Nonlinearity},
  FJOURNAL = {Nonlinearity},
    VOLUME = {38},
      YEAR = {2025},
    NUMBER = {5},
     PAGES = {Paper No. 055022, 67},
      ISSN = {0951-7715,1361-6544},
   MRCLASS = {35Q55 (35A01 35A02)},
  MRNUMBER = {4902804},
MRREVIEWER = {Dongfeng\ Yan},
       DOI = {10.1088/1361-6544/add076},
       URL = {https://doi.org/10.1088/1361-6544/add076},
}

@article {IfrimTataru2024,
    AUTHOR = {Ifrim, Mihaela and Tataru, Daniel},
     TITLE = {The compressible {E}uler equations in a physical vacuum: a
              comprehensive {E}ulerian approach},
   JOURNAL = {Ann. Inst. H. Poincar\'{e} C Anal. Non Lin\'{e}aire},
  FJOURNAL = {Annales de l'Institut Henri Poincar\'{e} C. Analyse Non
              Lin\'{e}aire},
    VOLUME = {41},
      YEAR = {2024},
    NUMBER = {2},
     PAGES = {405--495},
      ISSN = {0294-1449,1873-1430},
   MRCLASS = {35Q75 (35L10 35Q31 35Q35 35R35)},
  MRNUMBER = {4721644},
       DOI = {10.4171/aihpc/91},
       URL = {https://doi.org/10.4171/aihpc/91},
}

@article {IfrimPineauTataruTaylor2025,
    AUTHOR = {Ifrim, Mihaela and Pineau, Ben and Tataru, Daniel and Taylor,
              Mitchell A.},
     TITLE = {Sharp {H}adamard local well-posedness, enhanced uniqueness and
              pointwise continuation criterion for the incompressible free
              boundary {E}uler equations},
   JOURNAL = {Ann. PDE},
  FJOURNAL = {Annals of PDE. Journal Dedicated to the Analysis of Problems
              from Physical Sciences},
    VOLUME = {11},
      YEAR = {2025},
    NUMBER = {1},
     PAGES = {Paper No. 16, 154},
      ISSN = {2524-5317,2199-2576},
   MRCLASS = {35B60 (35Q31 76B03 76B15)},
  MRNUMBER = {4915720},
MRREVIEWER = {Yongfu\ Wang},
       DOI = {10.1007/s40818-025-00204-4},
       URL = {https://doi.org/10.1007/s40818-025-00204-4},
}

@article {BockKruskal1979,
    AUTHOR = {Bock, T. L. and Kruskal, M. D.},
     TITLE = {A two-parameter {M}iura transformation of the {B}enjamin-{O}no
              equation},
   JOURNAL = {Phys. Lett. A},
  FJOURNAL = {Physics Letters. A},
    VOLUME = {74},
      YEAR = {1979},
    NUMBER = {3-4},
     PAGES = {173--176},
      ISSN = {0375-9601,1873-2429},
   MRCLASS = {35Q20 (58F07 76B15)},
  MRNUMBER = {591320},
       DOI = {10.1016/0375-9601(79)90762-X},
       URL = {https://doi.org/10.1016/0375-9601(79)90762-X},
}

@article {MR2383514,
    AUTHOR = {Dawson, L. and McGahagan, H. and Ponce, G.},
     TITLE = {On the decay properties of solutions to a class of
              {S}chr\"{o}dinger equations},
   JOURNAL = {Proc. Amer. Math. Soc.},
  FJOURNAL = {Proceedings of the American Mathematical Society},
    VOLUME = {136},
      YEAR = {2008},
    NUMBER = {6},
     PAGES = {2081--2090},
      ISSN = {0002-9939,1088-6826},
   MRCLASS = {35Q55 (35B40)},
  MRNUMBER = {2383514},
       DOI = {10.1090/S0002-9939-08-09355-6},
       URL = {https://doi.org/10.1090/S0002-9939-08-09355-6},
}

@article {Saut79,
    AUTHOR = {Saut, J.-C.},
     TITLE = {Sur quelques g\'{e}n\'{e}ralisations de l'\'{e}quation de
              {K}orteweg-de {V}ries},
   JOURNAL = {J. Math. Pures Appl. (9)},
  FJOURNAL = {Journal de Math\'{e}matiques Pures et Appliqu\'{e}es.
              Neuvi\`eme S\'{e}rie},
    VOLUME = {58},
      YEAR = {1979},
    NUMBER = {1},
     PAGES = {21--61},
      ISSN = {0021-7824,1776-3371},
   MRCLASS = {35Q20 (76B99)},
  MRNUMBER = {533234},
MRREVIEWER = {Amy\ Cohen},
}

@article {MR1044731,
    AUTHOR = {Abdelouhab, L. and Bona, J. L. and Felland, M. and Saut,
              J.-C.},
     TITLE = {Nonlocal models for nonlinear, dispersive waves},
   JOURNAL = {Phys. D},
  FJOURNAL = {Physica D. Nonlinear Phenomena},
    VOLUME = {40},
      YEAR = {1989},
    NUMBER = {3},
     PAGES = {360--392},
      ISSN = {0167-2789,1872-8022},
   MRCLASS = {58D25 (35Q53)},
  MRNUMBER = {1044731},
MRREVIEWER = {Peter\ L.\ Christiansen},
       DOI = {10.1016/0167-2789(89)90050-X},
       URL = {https://doi.org/10.1016/0167-2789(89)90050-X},
}

@article {FokasAblowitz1983,
    AUTHOR = {Fokas, A. S. and Ablowitz, M. J.},
     TITLE = {The inverse scattering transform for the {B}enjamin-{O}no
              equation---a pivot to multidimensional problems},
   JOURNAL = {Stud. Appl. Math.},
  FJOURNAL = {Studies in Applied Mathematics},
    VOLUME = {68},
      YEAR = {1983},
    NUMBER = {1},
     PAGES = {1--10},
      ISSN = {0022-2526},
   MRCLASS = {35Q20 (44A15 76V05)},
  MRNUMBER = {686248},
MRREVIEWER = {Amy\ Cohen},
       DOI = {10.1002/sapm19836811},
       URL = {https://doi.org/10.1002/sapm19836811},
}

@article {Lax1968,
    AUTHOR = {Lax, Peter D.},
     TITLE = {Integrals of nonlinear equations of evolution and solitary
              waves},
   JOURNAL = {Comm. Pure Appl. Math.},
  FJOURNAL = {Communications on Pure and Applied Mathematics},
    VOLUME = {21},
      YEAR = {1968},
     PAGES = {467--490},
      ISSN = {0010-3640,1097-0312},
   MRCLASS = {35.79 (47.00)},
  MRNUMBER = {235310},
MRREVIEWER = {Gerald\ Rosen},
       DOI = {10.1002/cpa.3160210503},
       URL = {https://doi.org/10.1002/cpa.3160210503},
}

@article {FokasFuchssteiner1981,
    AUTHOR = {Fokas, A. S. and Fuchssteiner, B.},
     TITLE = {The hierarchy of the {B}enjamin-{O}no equation},
   JOURNAL = {Phys. Lett. A},
  FJOURNAL = {Physics Letters. A},
    VOLUME = {86},
      YEAR = {1981},
    NUMBER = {6-7},
     PAGES = {341--345},
      ISSN = {0375-9601,1873-2429},
   MRCLASS = {58F07 (35Q20 76V05)},
  MRNUMBER = {637024},
       DOI = {10.1016/0375-9601(81)90551-X},
       URL = {https://doi.org/10.1016/0375-9601(81)90551-X},
}

@article {KaupMatsuno1998,
    AUTHOR = {Kaup, D. J. and Matsuno, Y.},
     TITLE = {The inverse scattering transform for the {B}enjamin-{O}no
              equation},
   JOURNAL = {Stud. Appl. Math.},
  FJOURNAL = {Studies in Applied Mathematics},
    VOLUME = {101},
      YEAR = {1998},
    NUMBER = {1},
     PAGES = {73--98},
      ISSN = {0022-2526,1467-9590},
   MRCLASS = {34L25 (34A55 35Q53 37K15 76B99)},
  MRNUMBER = {1627325},
MRREVIEWER = {H.\ J. S. Dorren},
       DOI = {10.1111/1467-9590.00086},
       URL = {https://doi.org/10.1111/1467-9590.00086},
}

\end{document}